\numberwithin{equation}{section}
\newtheorem{definition}{Definition}[section]
\newtheorem{theorem}{Theorem}[section]
\newtheorem{lemma}{Lemma}[section]
\newtheorem{remark}{Remark}[section]
\newtheorem{proposition}{Proposition}[section]
\newcommand{\ba}{\begin{aligned}}
\newcommand{\ea}{\end{aligned}}
\newcommand{\be}{\begin{equation}}
\newcommand{\ee}{\end{equation}}
\newcommand{\bnn}{\begin{eqnarray*}}
\newcommand{\enn}{\end{eqnarray*}}
\newcommand{\thatsall}{\hfill$\Box$}
\date{}
\title{Global Existence   of  Strong and Weak Solutions to 2D Compressible Navier-Stokes System   in  Bounded Domains with Large Data and Vacuum}
 \author{Xinyu F{\small AN}$^{a}$, Jiaxu   L{\small I}$^{ a} $,  Jing L{\small I}$^{ b,c ,a} $ \thanks{Email addresses:  fanxinyu17@mails.ucas.edu.cn (X. Y. Fan), Jiaxvlee@gmail.com (J. X. Li), ajingli@gmail.com  (J. Li)  }  \\  {\normalsize a.  School of Mathematical Sciences,}\\
{\normalsize  University of Chinese Academy of Sciences, Beijing 100049, P. R. China;}\\
{\normalsize b. Department of Mathematics, }\\ {\normalsize  Nanchang University, Nanchang 330031, P. R. China;} \\ {\normalsize c. Institute of Applied Mathematics, AMSS,} \\ {\normalsize \&   Hua Loo-Keng Key Laboratory of Mathematics,}\\
{\normalsize  Chinese Academy of Sciences,    Beijing 100190,
P. R. China }}
\begin{document}
\maketitle
\begin{abstract}We study the barotropic compressible Navier-Stokes system where the shear viscosity is a positive constant and the bulk one   proportional to a power of the density with the power bigger than one and a third.  The system is subject to the Navier-slip boundary conditions in  a general two-dimensional  bounded simply connected      domain. For initial density allowed to vanish, we establish   the global existence of strong and weak solutions without any restrictions on   the size of initial value. To get over the difficulties brought by boundary,  on the one hand, we    apply Riemann mapping theorem and  the pull-back Green's function method to get a pointwise representation of the effective viscous flux.  On the other hand, observing that the
orthogonality   is preserved under conformal mapping due to its  preservation on the angle, we use the slip boundary conditions to reduce the integral
representation to the desired commutator form whose singularities can be   cancelled  out by using the estimates on the spatial gradient of the velocity.  \\
\par\textbf{Keywords:} compressible flow; Riemann mapping theorem; slip boundary conditions; large initial value; global   solutions; vacuum
\end{abstract}
\section{Introduction and main results}
We study the  barotropic compressible Navier-Stokes system in a two-dimensional (2D) domain $\Omega$:
\begin{equation}
\left\{ \begin{array}{l}
  \rho_t+\mathrm{div}(\rho u)=0, \\
  (\rho u)_t+\mathrm{div}(\rho u\otimes u)+\nabla P=\mu\triangle u+\nabla((\mu+\lambda)\mathrm{div} u),   \label{11}
       \end{array} \right. \\
\end{equation}
where $\rho=\rho(x, t)$ and $u=(u_1(x, t), u_2(x, t))$ represent the unknown density and velocity respectively,  and
the pressure $P$ is given by
\begin{equation}
P=a\rho^\gamma,  a>0,  \gamma>1.
\end{equation}
We also have the following hypothesis on the shear viscosity coefficients $\mu$ and the
bulk one $\lambda$:
\begin{equation}\label{qiu09}
0<\mu=\mathrm{constant}, \ \lambda(\rho)=b\rho^\beta,
\end{equation} with positive constants $b$ and $\beta.$
We set $a=b=1$ without loss of generality.  In this paper,  we assume that $\Omega$ is a simply connected bounded $C^{2,1}$-domain in $\mathbb{R}^2.$   In addition,  the system is   subject to the given initial data
\begin{equation}
\rho(x, 0)=\rho_0(x), \ \rho u(x, 0)=m_0(x), \ x\in\Omega,
\end{equation}
and Navier slip boundary conditions:
\begin{equation}
u\cdot n=0\ \mathrm{and}\ \mathrm{curl} u=0\ \mathrm{on}\ \partial\Omega,  \label{15}
\end{equation} where $n=(n_1,n_2)$ denotes the unit outer normal vector of the boundary $\partial \Omega.$

There is large number of literature about the strong solvability for multidimensional compressible Navier-Stokes system with constant viscosity coefficients.  The history of the area may trace back to Nash \cite{1962Le} and Serrin \cite{ser1},  who established the local existence and uniqueness of classical solutions respectively for the density away from vacuum. The first result of global classical solutions was due to Matsumura-Nishida \cite{1980The}   for initial data close to a non-vacuum equilibrium in $H^{3}.$   Hoff \cite{1995Global,hoff2005} then studied the problem with discontinuous initial data, and introduced a new type of a priori estimates   on the material derivative  $\dot{u}$. The major breakthrough in the frame of weak solutions was due to Lions \cite{1998Mathematical}, where he successfully obtained the global existence of weak solutions just under the assumption that the energy is finite initially. For technical reasons, the exponent  $\gamma$ was larger than $\frac{9}{5}$, which was further released to the critical case $\gamma>\frac{3}{2}$ by Feireisl et al\cite{feireisl2004dynamics}. Recently,  Huang-Li-Xin \cite{hlx21} and Li-Xin \cite{lx01} established the global existence and uniqueness of classical solutions, merely assumed the initial energy small enough, where large oscillations were available. No restriction about the support of initial density was attached, even compact support is allowed. More recently, for the Navier-slip boundary conditions in  general bounded domains, Cai-Li \cite{caili01}  obtain the global existence and exponential growth of classical solutions with vacuum provided that the initial energy is suitably small.

In contrast, positive results without limitation on the size of initial value are rather fewer. Vaigant-Kazhikov   \cite{vaigant1995} pioneered in large initial value theory. They obtained a unique global strong solution under the restriction $\beta>3$ in rectangle domain. We note that the viscous coefficients depending on density seems crucial in large value theory. 
Very recently, Huang-Li \cite{huang2016existence,hl21} applied some new ideas based on commutator theory and blow up criterion, and improved the conclusion in periodic case, and even for the Cauchy problem in the whole space (see \cite{jwx1} also), demanding only $\beta>\frac{4}{3}$. Up to now, $\beta>\frac{4}{3}$ still seems to be  the best result one may expect. However,  for general domains, the theory of large initial data is still blank, boundary terms do bring some essential difficulties. Therefore the aim of the paper is to study the global existence of strong and weak solutions with large initial data in general simply connected domains.

Before stating the main results,  we explain the notations and conventions used throughout this paper.
For a positive integer $k$ and $1 \leq p < \infty$,  the standard $L^p$ spaces   and Sobolev ones are denoted as follows:
\begin{equation*}\begin{cases}L^p=L^p(\Omega), \,\, W^{k, p}=W^{k, p}(\Omega), \,\, H^k=W^{k, 2}(\Omega) ,\\
\|f\|_{L^p}=\|f\|_{L^p(\Omega)},   \, \|f\|_{W^{k, p}}=\|f\|_{W^{k, p}(\Omega)}, \, \|f\|_{H^k}=\|f\|_{W^{k, 2}(\Omega)}.\end{cases}
\end{equation*}

The material derivative and the transpose gradient are given by
\begin{equation}\label{ou1r}
\frac{\mathrm{D}}{Dt}f=\dot{f}\triangleq\frac{\partial}{\partial t}f+u\cdot\nabla f, \ \ \
\nabla^{\bot}\triangleq(\partial_2, -\partial_1).
\end{equation}

First we define weak and strong solution as follows.
\begin{definition}Let $T > 0$ be a finite constant.  A solution $(\rho,  u)$ to  \eqref{11}  is called a weak solution if they satisfy \eqref{11} in the sense of distribution. Moreover, when all the derivatives involved in \eqref{11} are regular distributions, and \eqref{11} hold almost everywhere in $\Omega\times (0,  T)$, we call the solution a strong one.
\end{definition}

With our definition we state the main result concerning the global existence of strong solutions  as follows:
\begin{theorem}\label{thmq1}
Let $\Omega$ be a simply connected bounded domain in $\mathbb{R}^2$ with $C^{2,1}$ boundary $\partial\Omega$.  Assume that
\begin{equation}
\beta> \frac{4}{3}, \ \gamma> 1,  \label{17}
\end{equation}
and that the initial data $(\rho_0\ge 0,  m_0)$ satisfy  for some $q > 2$,
\begin{equation}
 \rho_0\in W^{1, q}(\Omega), \ u_0 \in \{H^1(\Omega)|u_0\cdot n=0, {\rm curl}u_0=0 \mbox{  on }\partial \Omega\}, \, m_0 = \rho_0u_0.  \label{18}
\end{equation}
Then the problem \eqref{11}--\eqref{15} has a unique strong solution $(\rho,  u)$ in $\Omega\times(0,  \infty)$ satisfying for any $0<T<\infty, $
\begin{equation}
 \begin{cases}
\rho\in C([0, T]; W^{1, q} ), \, \rho_t\in L^\infty( 0, T ;L^2 ), \\
u\in L^\infty(0, T;H^1 )\cap L^{1+1/q}(0, T;W^{2, q} ), \\
\sqrt tu\in L^\infty(0, T;H^{2} )\cap L^2(0, T;W^{2, q} ), \, \sqrt tu_t\in L^2(0, T;H^1 ), \\  \label{19}
\rho u\in C([0, T] ;L^2), \, \sqrt{\rho}u_t\in L^2((0, T)\times\Omega).
       \end{cases}
\end{equation}

\end{theorem}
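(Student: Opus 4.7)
The plan is the standard local-existence-plus-continuation scheme. A local strong solution with possibly-vanishing initial density is built by approximating $\rho_0$ by $\rho_0+\delta$ for $\delta\downarrow 0$, solving the resulting non-degenerate parabolic system on a time interval uniform in $\delta$ by a contraction or Schauder argument, and passing to the limit using the compatibility $m_0=\rho_0u_0$ and $u_0\in H^1$. A blow-up criterion of Serrin--Hoff--Huang--Xin type then guarantees that the local solution persists as long as $\|\rho\|_{L^\infty_{t,x}}$ stays finite, so the whole task reduces to an a priori $L^\infty$ bound on $\rho$ together with the accompanying integral estimates on $u$ needed to close the continuity-equation Gronwall.

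I would first run the usual hierarchy of energy estimates. The basic energy identity controls $\|\sqrt\rho u\|_{L^2}$, $\|\rho\|_{L^\gamma}$, and $\int_0^T\|\nabla u\|_{L^2}^2\,dt$. Testing the momentum equation against $\dot u=u_t+u\cdot\nabla u$ then yields a Hoff-type estimate on $\sqrt t\,\|\nabla u\|_{L^2}$ and $\int_0^T t\|\sqrt\rho\dot u\|_{L^2}^2\,dt$; the Navier-slip conditions $u\cdot n=0$ and $\mathrm{curl}\,u=0$ are exactly what is needed to make the viscous boundary contributions vanish or become sign-favourable (a routine but crucial computation in the style of \cite{caili01}). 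All constants are kept independent of any pointwise bound on $\rho$ apart from factors that can later be absorbed into a Gronwall step.

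The heart of the proof, and the main obstacle, is the $L^\infty$ bound on $\rho$. With the effective viscous flux $F\equiv(2\mu+\lambda(\rho))\,\mathrm{div}\,u-P(\rho)$, the continuity equation combined with $\mathrm{div}\,u=(F+P)/(2\mu+\lambda)$ rewrites as
\be\label{logrhoplan}
\frac{D}{Dt}\Big(\log\rho+\frac{1}{2\mu}\!\int_{1}^{\rho}\!\frac{\lambda(s)}{s}\,ds\Big)+\frac{P(\rho)}{2\mu}=-\frac{F}{2\mu},
\ee
so once $F$ is controlled in an appropriate Gronwall-amenable norm, the regime $\beta>4/3$ together with the Vaigant--Kazhikov/Huang--Li trick forces the desired upper bound on $\rho$. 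Now $F$ satisfies $\Delta F=\mathrm{div}(\rho\dot u)$ with boundary data determined by the slip conditions, and the difficulty is to estimate $\|F\|_{L^\infty}$ without a logarithmic loss in $\|\rho\|_{L^\infty}$. Here I would use the Riemann mapping theorem to pull $\Omega$ back to the unit disc, on which the Green's function is explicit, producing a pointwise integral representation of $F$ in terms of $\rho\dot u$ and the conformal factor. The kernel is $|x-y|^{-1}$-singular, hence not absolutely integrable, but---and this is the key observation flagged in the abstract---the orthogonality between the outward normal and the boundary tangent is preserved under conformal maps (angle preservation), so the slip conditions allow one integration by parts to recast the integral as a commutator of the form $[R,\phi](\rho u)$, with $R$ a Calder\'on--Zygmund-type operator built from the pulled-back Green's kernel and $\phi$ encoding the conformal factor. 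This commutator is only weakly singular, and its norm is controlled by $\|\nabla u\|_{L^2}$, closing the scheme through \eqref{logrhoplan}.

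With $\rho\in L^\infty$ and the Hoff estimates in hand, the higher regularity listed in \eqref{19} is reached by differentiating the momentum equation in $t$ and testing against $u_t$ (with $\sqrt t$-weights to absorb the initial-time singularity), combined with elliptic regularity for the Lam\'e operator under Navier-slip boundary conditions to upgrade $u$ to $L^{1+1/q}(0,T;W^{2,q})$ and $\sqrt t\,u$ to $L^2(0,T;W^{2,q})$; the transport structure of the continuity equation then yields $\rho\in C([0,T];W^{1,q})$ and $\rho_t\in L^\infty(0,T;L^2)$. Uniqueness follows by a standard energy estimate on the difference of two strong solutions, using $\rho\in L^\infty$ to control the convective nonlinearity.
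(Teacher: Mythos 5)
Your architecture coincides with the paper's: approximate the density from below, use the known local theory for non-vacuum data, reduce everything to an a priori $L^\infty$ bound on $\rho$, obtain that bound from the effective viscous flux $F$ via the pull-back of the unit disc's Green's function under a Riemann map, exploit angle preservation plus the slip condition to turn the singular integral into a commutator, and finish with Hoff-type weighted estimates, elliptic regularity for the Lam\'e system, and a standard limit passage. So the route is the right one. (Two small inaccuracies of presentation: the paper obtains the commutator not by an integration by parts driven by the slip condition but by substituting $\rho\dot u=(\rho u)_t+\mathrm{div}(\rho u\otimes u)$ into the Green's representation and absorbing the time derivative into a material derivative, the slip condition \eqref{1} being what tames the image-charge part of the kernel near $\partial\Omega$; and the boundary terms in the $\dot u$-energy estimate do not vanish or acquire a sign --- they are estimated through the identity $(u\cdot\nabla)u\cdot n=-(u\cdot\nabla)n\cdot u$ and the tangential integration-by-parts trick of Lemma \ref{042}.)

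The step that, as written, would fail is the claim that the commutator's ``norm is controlled by $\|\nabla u\|_{L^2}$, closing the scheme.'' In two dimensions $H^1$ does not embed into any H\"older space, so $\|\nabla u\|_{L^2}$ yields no modulus of continuity $|u(x)-u(y)|\lesssim |x-y|^{\theta}$ and therefore cannot cancel the $|x-y|^{-2}$ singularity of the kernel $\int_\Omega |u(x)-u(y)|\,|x-y|^{-2}\rho|u|(y)\,dy$; with only $\nabla u\in L^2$ this integral is genuinely divergent. The paper instead uses $\|\nabla u\|_{L^p}$ with $p>2$, bounded through $F$ and $\omega$ at the cost of a factor $R_T^{\frac12-\frac1p+\varepsilon}A_1\left(A_2^2/A_1^2\right)^{\frac12-\frac1p}$ (Lemma \ref{005}), together with the extra momentum integrability $\sup_t\int\rho|u|^{2+\nu}dx\le C$ with $\nu\sim R_T^{-\beta/2}$ (Lemma \ref{0001}) to handle the non-local part of the kernel, and the logarithmic bound $\sup_t\log A_1^2+\int_0^T A_2^2/A_1^2\,dt\le C R_T^{1+\alpha}$ (Proposition \ref{33}) to integrate in time. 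The outcome is $\int_0^T\max_x(\cdots)\,dt\le C R_T^{1+\beta/4+3\varepsilon}$ (Proposition \ref{006}), and the final balance $R_T^{\beta}\le C R_T^{\max\{1+\beta/4+3\varepsilon,\,(2+\beta)/3\}}$ is precisely where $\beta>4/3$ is consumed. Your sketch invokes the hypothesis $\beta>4/3$ but, by asserting an $L^2$-gradient control of the commutator, removes the very power-counting that makes the hypothesis necessary and the Gronwall argument close; that bookkeeping is the core of the proof and needs to be supplied.
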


The second result concerns the global existence of weak solutions.
\begin{theorem}\label{thmq2} Under the conditions of Theorem \ref{thmq1} except for  $\rho_0\in W^{1,q}(\Omega)$  in \eqref{18}   being replaced by $\rho_0\in L^\infty(\Omega).$  Then, there exists at least one weak solution $(\rho,u)$ of the problem \eqref{11}-\eqref{15}  in
$\Omega\times(0,  \infty)$ satisfying for any $0<\tau\leq T<\infty $ and $p\geq 1$
\begin{equation}\label{111}
 \begin{cases}
\rho\in L^{\infty}(0, T; L^{\infty})\cap C([0,T];L^p),\\
u\in L^\infty(0, T;H^1),u_t\in L^2(\tau,T;L^2),\nabla u\in L^\infty(\tau,T;L^p),\\  {\rm curl}u, (2\mu+\lambda)\mathrm{div}u-P \in L^2( 0,T;H^1  )\cap L^{3/2}( 0,T;L^\infty  ).
       \end{cases}
\end{equation}
 Moreover, there exists a positive constant $C$ depending on  $ \Omega, T,\mu,\beta,\gamma,\|\rho_0\|_{L^{\infty}}$, and $\|u_0\|_{H^1}$ such that \be\label{puw1}  \inf_{(x,t)\in \Omega\times(0,T)}\rho(x,t)\ge C^{-1}\inf_{x\in \Omega}\rho_0(x).\ee
\end{theorem}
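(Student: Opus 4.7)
The plan is to construct the weak solution by approximation from the strong solutions of Theorem~\ref{thmq1}. Regularize $\rho_0\in L^\infty(\Omega)$ by mollification to obtain $\rho_0^\delta\in W^{1,q}(\Omega)$ with $0\le\rho_0^\delta\le\|\rho_0\|_{L^\infty}$, $\rho_0^\delta\to\rho_0$ strongly in every $L^p(\Omega)$, and, whenever $\rho_*:=\inf_\Omega\rho_0>0$, with $\inf_\Omega\rho_0^\delta\ge\rho_*/2$ for $\delta$ small. Keeping $u_0$ unchanged, Theorem~\ref{thmq1} supplies global strong solutions $(\rho^\delta,u^\delta)$ satisfying \eqref{19}. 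The key observation is that all a priori bounds underlying Theorem~\ref{thmq1}---in particular those on $F:=(2\mu+\lambda)\mathrm{div}\,u-P$ obtained from the Riemann-mapping/pull-back Green's function representation---depend only on $\|\rho_0\|_{L^\infty}$ and $\|u_0\|_{H^1}$, not on $\|\rho_0\|_{W^{1,q}}$. Consequently one obtains $\delta$-uniform estimates matching \eqref{111}: uniform bounds on $\rho^\delta$ in $L^\infty(L^\infty)$, on $u^\delta$ in $L^\infty(H^1)$, on $u^\delta_t$ in $L^2(\tau,T;L^2)$, and on $F^\delta,\mathrm{curl}\,u^\delta$ in $L^2(0,T;H^1)\cap L^{3/2}(0,T;L^\infty)$.

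\textbf{Compactness and passage to the limit.} Aubin-Lions then gives $u^\delta\rightharpoonup u$ weakly in $L^\infty(0,T;H^1)$ and strongly in $L^2(0,T;L^p)$ for every $p<\infty$, while $\rho^\delta\to\rho$ weak-$*$ in $L^\infty(L^\infty)$ and, via the continuity equation, in $C([0,T];L^p_{\mathrm{weak}})$. The principal difficulty is identifying the weak limits of the nonlinearities $(\rho^\delta)^\gamma$ and $(\rho^\delta)^\beta\mathrm{div}\,u^\delta$, which demands the upgrade $\rho^\delta\to\rho$ \emph{strongly} in $C([0,T];L^p)$. I would follow the Lions-Feireisl scheme: the uniform $H^1$-regularity of $F^\delta$ together with strong compactness of $u^\delta$ imparts weak sequential continuity to the product $F^\delta\rho^\delta$; combined with the renormalized continuity equation for $b(s)=s\log s$, a Gronwall argument on the defect $\int_\Omega(\overline{\rho\log\rho}-\rho\log\rho)(\cdot,t)\,dx$ forces it to vanish. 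This is the heart of the argument, and making it work on a general simply connected domain under the slip condition \eqref{15} is precisely where the pull-back Green's function machinery from Theorem~\ref{thmq1} is essential, since the boundary terms in the flux identity must be cancelled using the slip condition. Once the strong $L^p$ convergence of $\rho^\delta$ is in hand, passing to the limit in \eqref{11} produces the claimed weak solution with the regularity \eqref{111}.

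\textbf{Lower bound on the density.} For \eqref{puw1}, suppose $\rho_*>0$. At the $\delta$-level, $\rho^\delta$ is a strong solution staying strictly positive, and $\log\rho^\delta$ satisfies
\begin{equation*}
(\log\rho^\delta)_t+u^\delta\cdot\nabla\log\rho^\delta=-\mathrm{div}\,u^\delta=-\frac{F^\delta+(\rho^\delta)^\gamma}{2\mu+(\rho^\delta)^\beta}.
\end{equation*}
Since $2\mu+\lambda\ge 2\mu>0$, $\|\rho^\delta\|_{L^\infty}$ is uniformly bounded, and $F^\delta\in L^{3/2}(0,T;L^\infty)$ uniformly, the right-hand side is uniformly bounded in $L^1(0,T;L^\infty)$. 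Integrating along the particle paths of $u^\delta$ yields
\begin{equation*}
\rho^\delta(x,t)\ge(\rho_*/2)\exp\!\Big(-\int_0^T\|\mathrm{div}\,u^\delta(\cdot,s)\|_{L^\infty}\,ds\Big)\ge C^{-1}\rho_*,
\end{equation*}
with $C$ depending only on $\Omega,T,\mu,\beta,\gamma,\|\rho_0\|_{L^\infty},\|u_0\|_{H^1}$. Passing to the limit $\delta\to 0$ via the strong convergence of $\rho^\delta$ established above gives \eqref{puw1}.
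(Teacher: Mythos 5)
Your proposal follows essentially the same route as the paper: mollify the initial density, use the fact that all the $\delta$-uniform a priori bounds depend only on $\|\rho_0\|_{L^\infty}$ and $\|u_0\|_{H^1}$, pass to the limit by Aubin--Lions with the effective viscous flux driving the strong $C([0,T];L^p)$ convergence of $\rho^\delta$, and obtain \eqref{puw1} from the uniform $L^1(0,T;L^\infty)$ control of $\mathrm{div}\,u^\delta$ exactly as in \eqref{puw2}. The only cosmetic differences are that the paper corrects $u_0$ by an elliptic system and shifts the density by $\delta$ so as to apply Proposition \ref{thmq3}, and that the pull-back Green's function machinery enters only through the a priori bounds of Section \ref{sec2} rather than through the compactness step itself as you suggest.
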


\begin{remark} Compared with the previous  known results on the global existence of strong solutions with large data   \cite{vaigant1995,jwx,jwx1,hl21,huang2016existence}, our   Theorem  \ref{thmq1}   seems  to be the first concerning the global existence of strong   solutions to the compressible Navier-Stokes system  in general two-dimensional  bounded domains with large data.
\end{remark}

\begin{remark} It should be mentioned here that  \eqref{puw1} implies  that    the weak solutions obtained by Theorem \ref{thmq2}  will not
exhibit vacuum states in any finite time  provided that no
vacuum states are present initially.  However, for  Lions-Feireisl's  weak solutions  \cite{1998Mathematical,feireisl2004dynamics}, whether this phenomenon still holds or not remains open.
\end{remark}

\begin{remark} For technical reason, we still need assume that $\beta>4/3$ which is the same as that of \cite{hl21,huang2016existence,jwx1}. However,   it seems that $\beta>1$ is the extremal case for the system   \eqref{11}--\eqref{qiu09}  (see \cite{vaigant1995} or Lemma \ref{004} below). Therefore, it would be interesting to study the case of $1<\beta\le 4/3$ which   is left for the future.

\end{remark}


We now make some comments about the analysis of the whole paper.  Similar to \cite{hl21,huang2016existence,jwx1},   the key issue of the existence of global solutions is to derive the upper bound of density $\rho$.
One possible way is to   rewrite the conservation of mass formally in following way
\begin{equation}\label{qu10}
\frac{\mathrm{D}}{dt}(\theta(\rho)+\Delta^{-1}\mathrm{div}(\rho u))+ P =u\cdot\nabla\Delta^{-1}\mathrm{div}(\rho u) -\Delta^{-1}\mathrm{div} \mathrm{div}(\rho u\otimes u)
\end{equation}
where \be \label{qis1}\theta(\rho)\triangleq 2\mu \log \rho+\beta^{-1}\rho^\beta,\ee and $\Delta^{-1}$ is taken in the proper sense. A remarkable fact is that the combination of the   two terms on the righthand side  of \eqref{qu10}   forms a  Calderon-type commutator in some cases, which indeed improves the integrability. For example, the theory applies perfectly in the case of whole space $\mathbb{R}^{2}$ (\cite{hl21}) and that of periodic boundary $\mathbb{T}^{2}$ (\cite{huang2016existence}) due to the fact that one can  interchange the $\Delta^{-1}$ and $\nabla$ at both cases. But for general bounded domains, the interchangeability fails,  and hence, the classical commutator theory is
no longer available, which is indeed a major difficulty in our situation.

To overcome it,  we first take a look at the mechanism of commutator for the case of  $\mathbb{T}^{2}$ and $\mathbb{R}^{2}.$  Rewriting  commutator in singular integral form as
\begin{equation*}
\left[u, R_iR_j\right](\rho u)=\int\frac{u(x)-u(y)}{|x-y|^{2}}\rho u(y)dy,
\end{equation*}
we observe that the key fact about the representation is that the singularity can be actually cancelled out once $u$ lies in any H\"{o}lder space, which can be guaranteed by any control upon
$\|\nabla u\|_{L^{p}}$ with $p>2$.
Accordingly, a term of commutator type which can be written in the above integral form plays an important role in   carrying out further  computations.

Thus, motivated by the    above analysis, we find an alternative approach to estimate the upper bound of the density. Note that equivalently we have for $\theta(\rho)$ as in \eqref{qis1}
\begin{equation*}
\frac{\mathrm{D}}{dt}\theta(\rho)+P=-F,
\end{equation*}
where  $F=(2\mu+\lambda)\mathrm{div}u-P$ is the so-called effective viscous flux.
This implies that in order to estimate $\rho$, we need  to establish proper bound on $F$ which indeed solves a Neumann problem as follows:
\begin{equation}\label{qp18}
\begin{cases}
  \triangle F=\mathrm{div}(\rho \dot{u}) &\mathrm{in}\, \,  \Omega, \\
   \frac{\partial F}{\partial n}=\rho\dot{u}\cdot n &\mathrm{on}\, \,  \partial \Omega.
       \end{cases}
\end{equation}
Generally, we can get a pointwise representation of $F$ via applying Green's function. We first consider the simplest case that $\Omega$ is the unit disc $\mathbb{D}$, where  Green's function takes the form(see \cite{2016Representation}):
\begin{equation}\label{oqw1}
N(x, y)=-\frac{1}{2\pi}\left[\mathrm{log}|x-y|+\mathrm{log}\left||x|y-\frac{x}{|x|} \right|\right].
\end{equation}
After applying Green's identity, we derive the integral representation of $F$, but the singularity is still out of control especially for the integral near the boundary. Fortunately, we observe that for
each $x\in\partial\mathbb{D}$, $x$ is just the outer normal of $\partial\mathbb{D}$, and slip boundary yields $u(x)\cdot x=0$ on $\partial\mathbb{D}$ (see also \eqref{1}). Such cancellation condition helps us reduce the integral
representation to the desired commutator form, thus we may apply the previous idea.

For general domains, unfortunately,   the precise formula of  Green's  function is unknown. So we need   further arguments. The cancelation of singularity on the boundary is indeed a type of geometry restriction, and we do not know whether it remains true in other circumstance or not. However, we observe that the orthogonality which is crucial in our computation is preserved under conformal mapping, since it preserves the angle. Moreover, Riemann mapping theorem (\cite[Chapter 9]{2007Complex}) makes sure every simply connected domain
is conformally   equivalent with the unit disc. Consequently, it is reasonable to expect that we can reduce the general case to that of unit disc via making use of conformal mapping. The goal is achieved
by pull-back Green's function (see \eqref{348}). Finally, after some careful calculations,  we obtain the desired upper bound  upon $\rho$.

Another major technical difficulty arises from lower order estimates,  especially, the trace of  $\nabla u$ on the boundary $\partial \Omega$,  which  seems ought to be controlled by  the $L^p$-norm of $\nabla^2u$ in general case.  Unfortunately,  we have no a priori estimate on $\nabla^2u$ at this stage.  To overcome it,  we mainly adapt the ideas due to Cai-Li\cite{caili01}.  On the one hand, observe that the slip boundary condition $u\cdot n|_{\partial \Omega}=0$ admits
\begin{equation}\label{16}
  u=(u\cdot n^\bot)n^\bot, \,\, (u\cdot\nabla)u\cdot n=-(u\cdot\nabla)n\cdot u,
\end{equation}
where  $n^\bot$ is the unit tangential vector on the boundary $\partial \Omega$ denoted by\be\label{q1w} n^\bot\triangleq (n_2,-n_1).\ee  On the other hand, as observed by  Cai-Li\cite{caili01}, it holds that for   function $f\in H^1$\bnn\ba \left|\int_{\partial \Omega}u\cdot \nabla f ds\right|&=\left| \int_{\partial \Omega}(u\cdot n^\bot)n^\bot\cdot \nabla f ds\right|\\&= \left|\int_{  \Omega} \nabla f\cdot \nabla^\bot (u\cdot n^\bot) dx\right|\le C\|f\|_{H^1}\|u\|_{H^1},\ea\enn where the operator $\nabla^\bot=(\partial_2,-\partial_1) $ is as in \eqref{ou1r}. With these key observations at hand, after some delicate calculations,  all the boundary term including $\nabla u$ caused by integration by parts can be handled. 
  See Lemma \ref{042} for details. Finally, after all obstacles eliminated,  the conclusion follows in a rather routine way.

\begin{remark}
 In order to derive the proper representation on $F$, we spend much effort  on studying the property (see Lemma \ref{003}) of the pull-back Green's function $\tilde{N}$ \eqref{348} and  computing corresponding representation \eqref{qw10}, instead  of    using directly the common   Green's  function $G$ (see \cite{1971The,1972The}),  the precise formula of  which is still unknown for general bounded smooth domains. We comment that such procedure is necessary because the usual Green's  function $G $    can not meet our requirements. Indeed, 
 for usual Green's  function $G$,  the known estimates   we can
use are the order of singularity (see \cite{1971The,1972The}):
\begin{equation*}
|\partial^{\alpha}_x\partial^{\beta}_y G(x,y)|\leq C|x-y|^{-\alpha-\beta},
\end{equation*}
with $\alpha,\beta$   nonnegative integers such that $\alpha+\beta>0$. This information on singularity only guarantees \eqref{qw01} dominated by
\begin{equation}\label{obu1}
\int_\Omega\frac{|u|}{|x-y|^2}\rho |u|(y)dy,
\end{equation}
whose singularity is indeed out of control in dimension 2.
But after using $\tilde{N},$ we change the key integral representation of  the  effective viscous flux $F$ into   \eqref{qw10}  which together with the slip boundary \eqref{1} further  expresses the form of above term \eqref{obu1}    explicitly as
\begin{equation*}
\int_\Omega\frac{|u(x)-u(y)|}{|x-y|^2}\rho |u|(y)dy.
\end{equation*}
Such term     can improve  the integrability just like the common Calderon-type commutator   (see Proposition \ref{006}) and meets our request.
\end{remark}

The rest of paper is organized as follows. In Section \ref{sec1}, we state some elementary preliminaries   which will be used later.  Sections  \ref{sec2} and  \ref{sec3} are devoted to the lower order and higher one a priori estimates which will be carried out in details.  Finally, with all necessary estimates in hand, we will prove the main results, Theorems \ref{thmq1} and \ref{thmq2},  in Section  \ref{sec4} shortly in a standard way.
\section{\label{sec1}Preliminaries}
First of all,  we quote the well-known local existence theory in\cite{weko_39341_1,Solonnikov1980}, where the initial density is strictly away from vacuum. Actually it is the cornerstone of our proof on global situation.
\begin{lemma} \label{u21}
Assume that $(\rho_0,  m_0)$ satisfies
\begin{equation}
\rho_0\in H^2, \inf_{x\in\Omega}\rho_0(x)>0, \ u_0\in \{ H^2| u_0\cdot n=0, {\rm curl}u_0=0 \mbox{  on }\partial \Omega\},\  \ m_0=\rho_0u_0.  \label{21}
\end{equation}
Then there is a small time $T>0$ and a constant $C_0>0$ both depending only on $ \Omega ,\mu,\beta,\gamma,\|\rho_0\|_{H^2}$,  $\|u_0\|_{H^2},$ and $\inf\limits_{x\in\Omega}\rho_0(x)$ such that there exists a unique strong solution $(\rho,  u)$ to the problem \eqref{11}-\eqref{15} in $\Omega\times(0, T)$
satisfying
\begin{equation}
\left\{ \begin{array}{l}
\rho\in C([0, T]; H^2), \ \rho_t\in C([0, T];H^1), \\
u\in L^2(0, T;H^3), \ u_t\in L^2(0, T;H^1), \\   \label{22}
u_t\in L^2(0, T;H^2), \ u_{tt}\in L^2((0, T)\times\Omega),
       \end{array} \right. \\
\end{equation}
and
\begin{equation}
\inf_{(x, t)\in\Omega\times(0, T)}\rho(x, t)\geq C_0 > 0. \label{23}
\end{equation}
\end{lemma}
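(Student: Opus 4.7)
The plan is to prove Lemma \ref{u21} by the classical linearization–iteration scheme of Solonnikov and Tani, adapted to the Navier-slip boundary condition \eqref{15} and the density-dependent bulk viscosity \eqref{qiu09}. Concretely, starting from $(\rho^0,u^0)\equiv(\rho_0,u_0)$ (with $u_0$ extended appropriately), I would generate iterates $(\rho^{k+1},u^{k+1})$ by first solving the linear transport equation
\[
\rho^{k+1}_t+u^k\cdot\nabla\rho^{k+1}+\rho^{k+1}\,\mathrm{div}\,u^k=0,\qquad \rho^{k+1}(\cdot,0)=\rho_0,
\]
by the method of characteristics, and then solving the linear parabolic system
\[
\rho^{k+1}u^{k+1}_t+\rho^{k+1}u^k\cdot\nabla u^{k+1}+\nabla P(\rho^{k+1})
=\mu\triangle u^{k+1}+\nabla\bigl((\mu+\lambda(\rho^{k+1}))\,\mathrm{div}\,u^{k+1}\bigr),
\]
subject to $u^{k+1}\cdot n=0$, $\mathrm{curl}\,u^{k+1}=0$ on $\partial\Omega$ and $u^{k+1}(\cdot,0)=u_0$. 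Because $\inf\rho_0>0$, standard transport estimates propagate a pointwise lower bound $\rho^{k+1}\ge\tfrac12\inf\rho_0$ on a short interval, so the coefficient of $u^{k+1}_t$ stays uniformly positive and the elliptic part $-\mu\triangle -\nabla((\mu+\lambda(\rho^{k+1}))\,\mathrm{div}\,\cdot)$ remains uniformly elliptic.

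For the a priori estimates on the velocity iterate I would use the Navier-slip Lam\'e system $L_\rho v:=-\mu\triangle v-\nabla((\mu+\lambda(\rho))\mathrm{div}\,v)$. The key fact is that in a simply connected $C^{2,1}$ domain the Navier-slip boundary $(v\cdot n, \mathrm{curl}\,v)|_{\partial\Omega}=(0,0)$ satisfies the Lopatinskii–Shapiro complementing condition, so one has the standard $H^2$- and $H^3$-elliptic regularity
\[
\|v\|_{H^{s+2}}\le C\bigl(\|L_\rho v\|_{H^s}+\|v\|_{L^2}\bigr),\qquad s=0,1,
\]
with $C$ depending on $\|\rho\|_{H^2}$ and $\inf\rho$. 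Multiplying the linearized momentum equation by $u^{k+1}_t$ and by $-\triangle u^{k+1}$ (and differentiating in time for the second-order estimates), and handling boundary terms via the identities \eqref{16} and the observation after \eqref{q1w}, gives a uniform bound
\[
\sup_{t\in[0,T]}\!\bigl(\|\rho^{k+1}\|_{H^2}^2+\|u^{k+1}\|_{H^2}^2\bigr)
+\int_0^T\!\bigl(\|u^{k+1}\|_{H^3}^2+\|u^{k+1}_t\|_{H^1}^2+\|u^{k+1}_{tt}\|_{L^2}^2\bigr)dt\le M,
\]
for some $M$ and some short $T>0$, both depending only on the quantities listed in the statement.

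Next I would prove convergence by contraction in a weaker norm: setting $\bar\rho^{k+1}=\rho^{k+1}-\rho^k$ and $\bar u^{k+1}=u^{k+1}-u^k$, subtracting successive iterates, and performing basic $L^2$ energy estimates yields
\[
\sup_{t\in[0,T]}\bigl(\|\bar\rho^{k+1}\|_{L^2}^2+\|\sqrt{\rho^{k+1}}\bar u^{k+1}\|_{L^2}^2\bigr)
+\int_0^T\|\nabla\bar u^{k+1}\|_{L^2}^2\,dt
\le \tfrac12\bigl(\cdots\text{previous step}\cdots\bigr),
\]
after possibly shrinking $T$. Hence $(\rho^k,u^k)$ converges in $C([0,T];L^2)\cap L^2(0,T;H^1)$, and the uniform bounds above upgrade the limit to the regularity class \eqref{22} by weak-$*$ compactness and interpolation. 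The lower bound \eqref{23} is inherited from the uniform bounds $\rho^k\ge\tfrac12\inf\rho_0$. Uniqueness follows from the same $L^2$-type stability estimate applied to the difference of two solutions.

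The main technical obstacle is the interaction of the Navier-slip boundary conditions with the density-dependent bulk viscosity: the natural energy estimates produce boundary integrals of $u\cdot\nabla u$ and $\nabla\lambda(\rho)\cdot\nabla u$ which are not immediately controlled, and one must exploit the tangent-normal decomposition \eqref{16} together with the identity $\int_{\partial\Omega}u\cdot\nabla f\,ds=\int_\Omega \nabla f\cdot\nabla^\perp(u\cdot n^\perp)\,dx$ to convert boundary terms into interior ones. A secondary subtlety is preserving the compatibility $\mathrm{curl}\,u^{k+1}=0$ on $\partial\Omega$ through the iteration, for which one appeals to the fact that the linear Lam\'e problem with Navier-slip data has a well-posed solution theory in the spaces dictated by \eqref{21}.
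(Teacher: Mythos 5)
The paper does not prove Lemma \ref{u21} at all: it is quoted as a known local well-posedness result and attributed to \cite{weko_39341_1,Solonnikov1980}, so there is no in-paper argument to compare yours against. Your proposal is the standard linearization--iteration scheme that those references (and their many descendants) employ, and as an outline it is the right route: transport equation by characteristics for the density, linear Lam\'e-type parabolic problem for the velocity, uniform short-time bounds, contraction in a weak norm, and passage to the limit.

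Two points in your sketch deserve more care than you give them. First, you invoke a Lopatinskii--Shapiro verification to get $H^{s+2}$ regularity for the operator $L_\rho$ under the boundary conditions \eqref{15}; in the present two-dimensional, simply connected setting the cleaner (and the paper's implicit) route is to split into $\mathrm{div}\,u$ and $\mathrm{curl}\,u$, use the Neumann/Dirichlet elliptic problems for the effective viscous flux and the vorticity, and recover full gradients from the div-curl estimate of Lemma \ref{lm22}; a direct complementing-condition argument for the variable-coefficient system $-\mu\triangle-\nabla((\mu+\lambda(\rho))\mathrm{div}\,\cdot)$ with $(v\cdot n,\mathrm{curl}\,v)|_{\partial\Omega}=(0,0)$ is not something you can simply assert. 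Second, the inclusion $u_{tt}\in L^2((0,T)\times\Omega)$ in \eqref{22} is obtained by testing the time-differentiated momentum equation with $u_{tt}$, which requires control of $\nabla u_t$ at $t=0$; with only $u_0\in H^2$ as in \eqref{21} one has $u_t(0)\in L^2$ but not obviously $u_t(0)\in H^1$, so either a compatibility observation or a time-weighted version of that estimate (subsequently upgraded using the strict positivity of $\rho_0$ and parabolic smoothing) is needed. Neither issue invalidates the strategy, but both are exactly where a referee of a self-contained proof would push back.
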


Next, to deduce the proper estimates upon $\nabla u$,  we require div-curl control which can be found in \cite{Aramaki2014Lp,Mitrea2005Integral,Wahl1992Estimating}.
\begin{lemma}\label{lm22}
Let $1<q<\infty$ and  $\Omega$ be a bounded domain in $\mathbb{R}^2$ with Lipschitz boundary $\partial\Omega$.  For $v\in W^{1, q}$,  if $\Omega$ is simply connected and $v\cdot n=0$ on $\partial\Omega$,  then it holds that
\begin{equation}
\|\nabla v\|_{L^p}\leq C(\|\mathrm{div}v\|_{L^p}+\|\mathrm{curl}v\|_{L^p}). \label{24}
\end{equation}
\end{lemma}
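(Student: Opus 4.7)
The plan is to reduce \eqref{24} to standard $L^p$ estimates for the Laplacian by means of a Helmholtz-type orthogonal decomposition of $v$ into a gradient part and a divergence-free part with prescribed boundary traces.

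First I would let $\phi$ solve the Neumann problem
\begin{equation*}
\Delta \phi = \mathrm{div}\, v \ \text{ in } \Omega, \qquad \frac{\partial \phi}{\partial n} = 0 \ \text{ on } \partial \Omega,
\end{equation*}
which is solvable up to an additive constant because the compatibility condition $\int_\Omega \mathrm{div}\, v\, dx = \int_{\partial \Omega} v\cdot n\, ds = 0$ is guaranteed by the hypothesis $v\cdot n = 0$. The standard $L^p$ theory for the Neumann Laplacian on the domain $\Omega$ then yields the Calder\'on--Zygmund bound $\|\nabla^2 \phi\|_{L^p} \le C\,\|\mathrm{div}\, v\|_{L^p}$.

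Next I would set $w := v - \nabla \phi$, so that $\mathrm{div}\, w = 0$, $w\cdot n = 0$ on $\partial \Omega$, and $\mathrm{curl}\, w = \mathrm{curl}\, v$. Since $\Omega$ is simply connected, the Poincar\'e lemma provides a scalar stream function $\psi$ with $w = \nabla^\perp \psi$. A direct computation gives $\nabla \psi \cdot n^\perp = -w\cdot n = 0$ on $\partial \Omega$, so $\psi$ is constant on the connected boundary $\partial \Omega$, and one may normalize $\psi \equiv 0$ there. Because $\mathrm{curl}(\nabla^\perp \psi) = -\Delta \psi$, the function $\psi$ solves the Dirichlet problem
\begin{equation*}
-\Delta \psi = \mathrm{curl}\, v \ \text{ in } \Omega, \qquad \psi = 0 \ \text{ on } \partial \Omega,
\end{equation*}
and the Dirichlet $L^p$ regularity yields $\|\nabla^2 \psi\|_{L^p} \le C\,\|\mathrm{curl}\, v\|_{L^p}$. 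Combining the two bounds with the pointwise identity $\nabla v = \nabla^2 \phi + \nabla \nabla^\perp \psi$ gives \eqref{24}.

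The main technical obstacle is justifying the two elliptic $L^p$ estimates under only a Lipschitz assumption on $\partial \Omega$: for the full range $1 < p < \infty$, sharp Calder\'on--Zygmund bounds for the Neumann and Dirichlet Laplacians on Lipschitz domains are delicate and rely on results of the type established in the cited references \cite{Aramaki2014Lp,Mitrea2005Integral,Wahl1992Estimating}. In the setting actually used throughout the paper the boundary is $C^{2,1}$, so the classical smooth-boundary theory applies directly for every $p \in (1, \infty)$ and no sharper machinery is needed.
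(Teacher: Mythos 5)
The paper does not prove this lemma at all: it is quoted as a known result with references to Aramaki, Mitrea, and von~Wahl, so there is no in-paper argument to compare against. Your proof is the standard one and is essentially what underlies those references in the smooth setting: split $v=\nabla\phi+w$ with $\phi$ solving the Neumann problem for $\operatorname{div}v$, observe that $w$ is divergence-free with $w\cdot n=0$, use simple connectedness to produce a stream function $\psi$ that is constant (hence normalizable to zero) on the connected boundary, and invoke Dirichlet regularity for $-\Delta\psi=\pm\operatorname{curl}v$. Two small remarks. First, with the paper's convention $\nabla^{\bot}=(\partial_2,-\partial_1)$ and $\operatorname{curl}w=\partial_2w_1-\partial_1w_2$ one gets $\operatorname{curl}(\nabla^{\bot}\psi)=+\Delta\psi$, not $-\Delta\psi$; this is only a sign convention and does not affect the estimate. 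Second, and more substantively, your closing caveat is the real content of the lemma: on a genuinely Lipschitz domain the $W^{2,p}$ estimates for the Neumann and Dirichlet Laplacians fail outside a window of exponents around $p=2$ (reentrant corners produce $\nabla v\sim r^{\alpha-1}$ with $\alpha<1$ even for smooth data), so your argument as written establishes the lemma only for smooth boundaries, and the statement for all $1<p<\infty$ on arbitrary Lipschitz domains should be read with that restriction or deferred to the cited nonsmooth-domain theory. Since the paper only ever applies the lemma on its $C^{2,1}$ domain $\Omega$, your proof fully covers every use made of it.
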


Moreover, for technical reasons, the modified version of classical Poincar\'{e}-Sobolev inequality will be used frequently.
\begin{lemma}[\cite{tal1}]
 There exists a positive constant $C$ depending only on $\Omega$ such that every function $u\in H^1(\Omega)$ satisfies for $2<p<\infty$,
\begin{equation}
\|u\|_{L^p}\leq Cp^{1/2}\|u\|^{2/p}_{L^2}\|u\|^{1-2/p}_{H^1}.  \label{25}
\end{equation}
In particular,   $\|u\|_{H^1}$ can be replaced by $\|\nabla u\|_{L^2}$ provided
$$ u\cdot n|_{\partial\Omega}=0 \mbox{ or }\int_\Omega udx=0.  $$


\end{lemma}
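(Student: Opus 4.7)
The inequality is the sharp two-dimensional Gagliardo--Nirenberg interpolation between $L^2$ and $H^1$ with the constant growing exactly like $\sqrt{p}$ as $p\to\infty$; this logarithmic-type borderline behavior reflects the fact that $H^1(\Omega)\not\hookrightarrow L^\infty(\Omega)$ in two dimensions while the embedding into every $L^p$ with $p<\infty$ does hold (Moser--Trudinger regime). My plan is three-fold: (i) reduce to $\mathbb{R}^2$ by extension, (ii) establish the inequality on $\mathbb{R}^2$ with the correct $\sqrt{p}$ constant, and (iii) derive the ``in particular'' clause from Poincar\'e.

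For step (ii), which is the heart of the matter, I would apply a Littlewood--Paley decomposition $u=\sum_{j}\Delta_j u$ for $u\in H^1(\mathbb{R}^2)$. Bernstein's inequality gives $\|\Delta_j u\|_{L^p}\le C\,2^{2j(1/2-1/p)}\|\Delta_j u\|_{L^2}$, and the Rademacher/Khintchine-type bound
\[
\|u\|_{L^p(\mathbb{R}^2)}\le C\sqrt{p}\,\Bigl(\sum_{j}\|\Delta_j u\|_{L^p}^{2}\Bigr)^{1/2},\qquad p\ge 2,
\]
supplies exactly the $\sqrt{p}$ factor we want (this is the step that distinguishes the target estimate from the usual unsharp Sobolev embedding). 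Substituting Bernstein, and then splitting the resulting dyadic sum via H\"older with conjugate exponents $(p/(p-2),p/2)$ between the Plancherel identities $\sum_{j}\|\Delta_j u\|_{L^2}^2\sim\|u\|_{L^2}^2$ and $\sum_{j}2^{2j}\|\Delta_j u\|_{L^2}^2\sim\|\nabla u\|_{L^2}^2$, yields $\|u\|_{L^p}\le C\sqrt{p}\|u\|_{L^2}^{2/p}\|\nabla u\|_{L^2}^{1-2/p}$ on $\mathbb{R}^2$. (A fully equivalent approach is to expand the Moser--Trudinger inequality $\int_\Omega\exp(\alpha u^2/\|u\|_{H^1}^2)\,dx\le C$ in a Taylor series and extract $\|u\|_{L^{2k}}\le C(k!)^{1/2k}\|u\|_{H^1}$, then use Stirling $(k!)^{1/2k}\lesssim\sqrt{k}$.)

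For step (i), since $\partial\Omega$ is $C^{2,1}$, a standard Stein extension operator $E:H^1(\Omega)\to H^1(\mathbb{R}^2)$ is available with $\|Eu\|_{L^r(\mathbb{R}^2)}\lesssim\|u\|_{L^r(\Omega)}$ for $r\in\{2,p\}$ and $\|\nabla(Eu)\|_{L^2(\mathbb{R}^2)}\lesssim\|u\|_{H^1(\Omega)}$; applying the whole-space estimate to $Eu$ and restricting back gives the basic inequality \eqref{25}. For step (iii), the case $\int_\Omega u\,dx=0$ is the textbook Poincar\'e inequality, while for a vector field with $u\cdot n|_{\partial\Omega}=0$ on a simply connected $\Omega$ one uses the Helmholtz decomposition to write the part of $u$ not controlled by Poincar\'e as $\nabla^{\bot}\psi$ with $\psi$ constant on each component of $\partial\Omega$, which again yields $\|u\|_{L^2}\le C\|\nabla u\|_{L^2}$; feeding $\|u\|_{H^1}\le C\|\nabla u\|_{L^2}$ into \eqref{25} finishes the proof.

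The principal obstacle is isolating the correct $\sqrt{p}$ growth of the constant: a direct application of the 2D Sobolev embedding only yields an unspecified $C(p)$, so one is forced to rely on either the Khintchine-type $L^p$-bound for Littlewood--Paley pieces or, equivalently, the sharp Moser--Trudinger exponential embedding combined with Stirling. Once this sharp growth has been extracted, the remaining ingredients---extension, H\"older interpolation between $L^2$ and the dyadic blocks, and Poincar\'e---are completely standard.
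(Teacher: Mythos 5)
The paper does not prove this lemma at all: it is quoted verbatim from Talenti's paper on the best constant in the Sobolev inequality (reference \cite{tal1}), and the standard derivation from that source is to apply the sharp embedding $\dot W^{1,q}(\mathbb{R}^2)\hookrightarrow L^{q^*}$ with $q=2p/(p+2)\to 2^-$ and read off the $p^{1/2}$ growth from the explicit constant $C(2,q)\sim\bigl(\tfrac{q-1}{2-q}\bigr)^{1-1/q}\sim\sqrt{p}$. So there is no in-paper argument to compare against; your proposal is an independent derivation, and its overall architecture (extension to $\mathbb{R}^2$, sharp whole-space inequality, Poincar\'e for the ``in particular'' clause) is sound. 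The dyadic H\"older step is correct: writing $2^{4j(1/2-1/p)}\|\Delta_j u\|_{L^2}^2=(2^{2j}\|\Delta_j u\|_{L^2}^2)^{1-2/p}(\|\Delta_j u\|_{L^2}^2)^{2/p}$ and applying H\"older with exponents $p/(p-2)$ and $p/2$ does produce exactly the interpolated right-hand side, and the extension and Poincar\'e steps are routine (for $u\cdot n|_{\partial\Omega}=0$ a compactness argument is cleaner than your Helmholtz sketch: a limiting constant field $c$ with $c\cdot n=0$ on all of $\partial\Omega$ must vanish since $n$ sweeps out every direction on a closed boundary curve).

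The one place where your argument leans on an unproved assertion is the bound $\|u\|_{L^p}\le C\sqrt{p}\,\bigl(\sum_j\|\Delta_j u\|_{L^p}^2\bigr)^{1/2}$: this is true and known, but it is not a one-line consequence of Khintchine (the naive duality route through the forward square-function estimate on $L^{p'}$ as $p'\to 1$ only yields a constant of order $p$, the same linear growth one gets from the elementary iteration $\|u\|_{L^{2k}}^{k}\le Ck\|u\|_{L^{2(k-1)}}^{k-1}\|\nabla u\|_{L^2}$). Since the whole point of the lemma is the $\sqrt{p}$ rather than $p$, you should either cite this reverse Littlewood--Paley inequality precisely or run your parenthetical Moser--Trudinger argument in full: from $\int_{\mathbb{R}^2}\bigl(e^{\alpha u^2/\|u\|_{H^1}^2}-1\bigr)\,dx\le C$ one gets $\|u\|_{L^{2k}}\le C(k!)^{1/(2k)}\|u\|_{H^1}\le C\sqrt{k}\,\|u\|_{H^1}$, and then the scale invariance of the homogeneous form on $\mathbb{R}^2$ (replace $u$ by $u(\lambda\cdot)$ and optimize in $\lambda$) upgrades this to the interpolated inequality $\|u\|_{L^p}\le C\sqrt{p}\,\|u\|_{L^2}^{2/p}\|\nabla u\|_{L^2}^{1-2/p}$ before restricting back to $\Omega$. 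With that step made explicit, the proof is complete.
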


Next,  to obtain the estimate on the $L^\infty(0,T;L^p(\Omega))$-norm of $\nabla \rho,$  we need the following Beale-Kato-Majda type inequality  which was first proved in \cite{beale1984remarks} when $\mathrm{div}u=0.$
\begin{lemma}[\cite{beale1984remarks,caili01}]\label{qo01}
For $2<q<\infty$,  assume that  $  u\in W^{2, q}(\Omega)$ with $u\cdot n=0$ and $\mathrm{curl}u=0$ on $\partial\Omega$. Then  there is a constant $C=C(q)$ such that the following estimate holds
\begin{equation}
\|\nabla u\|_{L^\infty}\leq C(\|\mathrm{div}u\|_{L^\infty}+\|\mathrm{curl}u\|_{L^\infty})\mathrm{log}(e+\|\nabla^2u\|_{L^q})+C\|\nabla u\|_{L^2}+C.
\end{equation}
\end{lemma}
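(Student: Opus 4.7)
My plan is to decompose $u$ by a Hodge-type representation compatible with the boundary conditions, and then apply a logarithmic endpoint Sobolev estimate to the resulting elliptic pieces. Since $\Omega$ is simply connected with $C^{2,1}$ boundary and $u\cdot n = 0$ on $\partial\Omega$, I first write $u = \nabla\varphi + \nabla^\perp\psi$, where $\varphi$ solves the Neumann problem
\[
\Delta\varphi = \mathrm{div}\, u \text{ in }\Omega,\quad \partial_n \varphi = 0 \text{ on }\partial\Omega,\quad \int_\Omega \varphi\, dx = 0,
\]
and $\psi$ solves the Dirichlet problem
\[
\Delta\psi = -\mathrm{curl}\, u \text{ in }\Omega,\quad \psi = 0 \text{ on }\partial\Omega.
\]
The constraint $u\cdot n = \partial_n\varphi - \partial_\tau\psi = 0$ is respected as soon as $\partial_n\varphi = 0$ and $\psi$ is constant on $\partial\Omega$, which is a single connected curve by simple connectedness, so the constant can be taken to be zero. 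Uniqueness of the div-curl-normal system in this geometry then forces $u = \nabla\varphi + \nabla^\perp\psi$.

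Since $|\nabla u| \leq |\nabla^2\varphi| + |\nabla^2\psi|$ pointwise, it suffices to bound $\|\nabla^2 v\|_{L^\infty}$ for $v\in\{\varphi,\psi\}$. For each such $v$, the endpoint Calderon-Zygmund estimate for the Neumann or Dirichlet Laplacian on a $C^{2,1}$ bounded domain yields $\|\nabla^2 v\|_{\mathrm{BMO}(\Omega)} \leq C\|\Delta v\|_{L^\infty} + C\|\nabla v\|_{L^2}$, and the two-dimensional logarithmic Sobolev embedding of Brezis-Gallouet-Wainger type gives, for $q > 2$,
\[
\|f\|_{L^\infty} \leq C\bigl(1 + \|f\|_{\mathrm{BMO}}\log(e + \|f\|_{W^{1,q}})\bigr) + C\|f\|_{L^2}.
\]
Applying this with $f = \nabla^2 v$ and invoking elliptic regularity $\|\nabla^3 v\|_{L^q} \leq C\|\nabla\Delta v\|_{L^q} + C\|\nabla v\|_{L^2} \leq C\|\nabla^2 u\|_{L^q} + C$, together with $\|\nabla v\|_{L^2} \leq C\|\nabla u\|_{L^2}$ and $\|\Delta v\|_{L^\infty} \leq \|\mathrm{div}\, u\|_{L^\infty} + \|\mathrm{curl}\, u\|_{L^\infty}$, produces the claimed inequality after summing the contributions from $\varphi$ and $\psi$.

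The main obstacle is the endpoint $L^\infty \to \mathrm{BMO}$ Calderon-Zygmund estimate up to the boundary for the Neumann and Dirichlet Poisson problems on $C^{2,1}$ domains, together with the matching log-Sobolev inequality on $\Omega$. These are classical but must be quoted carefully so that no extra $\log$-factor appears in front of $\|\nabla u\|_{L^2}$ in the final bound. An alternative route would combine the $L^p$ div-curl inequality of Lemma \ref{lm22} for a well-chosen $p \sim \log(e + \|\nabla^2 u\|_{L^q})$ with a Gagliardo-Nirenberg interpolation between $L^p$ and $W^{1,q}$; this bypasses $\mathrm{BMO}$ but requires quantitative control of the growth of the div-curl constant as $p\to\infty$ on a general simply connected $C^{2,1}$ domain, which is less transparent. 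Either route yields the stated Beale-Kato-Majda-type estimate.
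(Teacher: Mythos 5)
The paper does not prove this lemma at all: it is quoted as a known result from \cite{beale1984remarks,caili01}, so there is no in-paper argument to compare against. Your sketch is essentially the standard proof used in the cited reference of Cai--Li (adapted there to 3D with the same slip conditions): a Hodge-type splitting $u=\nabla\varphi+\nabla^{\perp}\psi$ adapted to $u\cdot n=0$, the endpoint $L^\infty\to \mathrm{BMO}$ Calderon--Zygmund estimate for the Dirichlet/Neumann Poisson problems, and a Brezis--Gallouet--Wainger/Kozono--Taniuchi logarithmic embedding, and the decomposition itself is justified correctly (the compatibility condition $\int_\Omega \mathrm{div}\,u\,dx=0$ holds because $u\cdot n=0$, and uniqueness of the div-curl system on a simply connected domain with vanishing normal trace gives the identity $u=\nabla\varphi+\nabla^\perp\psi$). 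One bookkeeping point deserves more care than you give it. In the form of the log inequality you quote, the full $W^{1,q}$ norm of $f=\nabla^2 v$ sits inside the logarithm, so after substituting the elliptic estimates you pick up a cross term of the shape $\left(\|\mathrm{div}\,u\|_{L^\infty}+\|\mathrm{curl}\,u\|_{L^\infty}\right)\log\left(e+\|\nabla u\|_{L^2}\right)$, which is not visibly of the stated form. This is harmless, but for a reason you should make explicit: since $\int_\Omega\mathrm{div}\,u\,dx=0$ and $\mathrm{curl}\,u=0$ on $\partial\Omega$, Morrey's inequality gives $\|\mathrm{div}\,u\|_{L^\infty}+\|\mathrm{curl}\,u\|_{L^\infty}\leq C\|\nabla^2u\|_{L^q}$, and Lemma \ref{lm22} gives $\|\nabla u\|_{L^2}\leq C(\|\mathrm{div}\,u\|_{L^2}+\|\mathrm{curl}\,u\|_{L^2})$, so every lower-order factor inside or in front of the logarithm is absorbed into $C(\|\mathrm{div}\,u\|_{L^\infty}+\|\mathrm{curl}\,u\|_{L^\infty})\log(e+\|\nabla^2u\|_{L^q})$ using $\log(e+x)\geq 1$. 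Alternatively, use the version of the logarithmic inequality that keeps only the top-order seminorm $\|\nabla f\|_{L^q}$ inside the logarithm with $\|f\|_{L^2}$ additive outside, applied directly to $f=\nabla u$; this avoids the cross terms entirely. With that clarification, your argument is complete modulo the quoted endpoint BMO estimate, which is indeed classical for $C^{2,1}$ domains.
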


 Next, let $\Omega$ be as in Theorem \ref{thmq1}  and $\mathbb{D}$ the unit disc. Then, by Riemann mapping theorem (\cite[Chapter 9]{2007Complex}), there exists a  conformal mapping   $\varphi=(\varphi_1, \varphi_2):\bar{\Omega}\rightarrow\bar{\mathbb{D}}$
  with  $\varphi_1$ and $\varphi_2$ satisfying the following Cauchy-Riemann equations:
\begin{equation}\label{001}
\begin{cases}
\partial_1\varphi_1=\partial_2\varphi_2, \\
\partial_2\varphi_1=-\partial_1\varphi_2.
       \end{cases}
\end{equation}
Moreover, the conformal mapping $\varphi(x)$ shares the following crucial properties: 
\begin{lemma}[\cite{2007Complex,1961On,Warschawski1935On}]\label{333}
The conformal mapping $\varphi(x):\bar{\Omega}\rightarrow\bar{\mathbb{D}}$ is smooth   and satisfies

(i) $\varphi(x)$ is a one to one holomorphic mapping from $\Omega$ to $\mathbb{D}$ and maps the boundary $\partial\Omega$ onto the boundary $\partial\mathbb{D}$.

(ii) For any integer $k=0,1,2$, there exists some constant $C$ depending only on $\Omega$ and $k$ such that
\begin{equation}\label{3353}
|\nabla^k\varphi(x)|\leq C,\,\,\, \forall x\in\bar{\Omega}.
\end{equation}
Consequently,
\begin{equation}\label{3354}
|\nabla^k\varphi(x)-\nabla^k\varphi(y)|\leq C|x-y|,\,\,\, \forall x, y\in\bar{\Omega}.
\end{equation}

(iii) There exist  two positive constants $c_1, c_2$ such that
\begin{equation}\label{3355}
c_1|x-y|\leq|\varphi(x)-\varphi(y)|\leq c_2|x-y|.
\end{equation}

(iv) Angle is preserved by $\varphi(x)$, that is, for any two smooth curves $\gamma_1(t), \gamma_2(t):(0, 1)\rightarrow\bar{\Omega}$,
\begin{equation*}
\langle\frac{d}{dt}\gamma_1(t), \frac{d}{dt}\gamma_2(t)\rangle
=\langle\frac{d}{dt}\varphi(\gamma_1(t)), \frac{d}{dt}\varphi(\gamma_2(t))\rangle,\,\,\, \forall t\in(0, 1).
\end{equation*}

(v) For any harmonic function $h(y)$ in $\mathbb{D}$, $h(\varphi(x))$ is still harmonic in $\Omega$, that is,
\begin{equation*}
\Delta h(y)=0\ \mathrm{in}\ \mathbb{D}\Rightarrow\Delta h(\varphi(x))=0\ \mathrm{in}\ \Omega.
\end{equation*}
\end{lemma}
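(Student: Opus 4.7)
The plan is to unpack the lemma as a consequence of the Riemann mapping theorem together with the boundary-regularity theory for conformal maps onto domains with smooth boundary. For part (i), I would invoke the Riemann mapping theorem to obtain a biholomorphism $\varphi:\Omega\to\mathbb{D}$. Because $\partial\Omega$ is a $C^{2,1}$ Jordan curve, Carathéodory's extension theorem gives a continuous extension $\varphi:\bar{\Omega}\to\bar{\mathbb{D}}$ which is a homeomorphism and carries $\partial\Omega$ onto $\partial\mathbb{D}$. The Cauchy--Riemann equations \eqref{001} encode the holomorphy of $\varphi$.

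For part (ii), the key input is the Kellogg--Warschawski boundary regularity theorem (cf.\ \cite{Warschawski1935On,1961On,2007Complex}): if $\partial\Omega$ is of class $C^{k,\alpha}$, then both $\varphi$ and $\varphi^{-1}$ extend to $C^{k,\alpha}$ maps on $\bar{\Omega}$ and $\bar{\mathbb{D}}$, respectively. Since $\partial\Omega\in C^{2,1}$, this yields $\varphi\in C^{2,1}(\bar{\Omega})$, which immediately gives \eqref{3353} and the Lipschitz estimate \eqref{3354} for $k=0,1,2$. For part (iii), since $\varphi$ is one-to-one and holomorphic, $\varphi'$ does not vanish on $\Omega$ by Hurwitz's theorem; combined with the Kellogg--Warschawski regularity applied to $\varphi^{-1}$, one has $|\nabla\varphi|,|\nabla\varphi^{-1}|$ bounded on $\bar{\Omega}$ and $\bar{\mathbb{D}}$, so the mean value inequality yields
\begin{equation*}
|\varphi(x)-\varphi(y)|\le \|\nabla\varphi\|_{L^\infty}|x-y|,\qquad |x-y|\le \|\nabla\varphi^{-1}\|_{L^\infty}|\varphi(x)-\varphi(y)|,
\end{equation*}
which is \eqref{3355}.

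Parts (iv) and (v) are then clean consequences of holomorphy. The differential $d\varphi$ at any point is multiplication by $\varphi'\neq 0$, a conformal linear map (rotation composed with scaling), which preserves angles between tangent vectors; this gives (iv). For (v), if $h$ is harmonic on $\mathbb{D}$ (simply connected), write locally $h=\mathrm{Re}(f)$ for some holomorphic $f$; then $h\circ\varphi=\mathrm{Re}(f\circ\varphi)$ is the real part of a holomorphic function on $\Omega$, hence harmonic. Equivalently, a direct chain-rule computation using \eqref{001} shows that $\Delta(h\circ\varphi)=(|\partial_1\varphi_1|^2+|\partial_2\varphi_1|^2)(\Delta h)\circ\varphi$, which vanishes whenever $\Delta h=0$.

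The main technical obstacle is establishing the $C^{2,1}$ regularity of $\varphi$ up to $\partial\Omega$ in part (ii); once this is in hand, the remaining items are essentially formal. I would therefore devote the bulk of the argument to quoting and applying the Kellogg--Warschawski type estimates, and verify that the hypothesis $\partial\Omega\in C^{2,1}$ is exactly sharp to reach the regularity level needed for the pull-back Green's function arguments in subsequent sections.
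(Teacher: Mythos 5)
Your argument is correct and matches the paper's treatment: the paper states this lemma as a quoted classical result (citing exactly the Riemann mapping theorem, Carath\'eodory extension, and the Kellogg--Warschawski boundary regularity theory you invoke) and supplies no proof of its own. The one caveat is that the Lipschitz endpoint $\alpha=1$ of the Kellogg--Warschawski theorem, which you need for the $k=2$ case of \eqref{3354}, is usually stated only for $0<\alpha<1$; but this borderline issue is inherited from the paper's own assertion and, in any case, only boundedness of $\nabla^2\varphi$ and Lipschitz continuity of $\nabla\varphi$ are actually used later.
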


Finally, since conformal mapping preserves angles, one immediately has the following conclusion.
\begin{lemma}\label{334}Let $n$   be the unit outer normal vector of $\partial\Omega$ at $x_0.$  Then
\be  \label{qi01}
n\cdot\nabla\varphi(x_0)=|\nabla\varphi_1(x_0)|\,\widetilde{n},
\ee
  where $\widetilde{n}$ is the unit outer normal vector  of $\partial\mathbb{D}$ at $\varphi(x_0).$
\end{lemma}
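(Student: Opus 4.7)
The plan is to argue by direct computation using the Cauchy--Riemann equations \eqref{001}, which force the Jacobian of $\varphi$ to have the conformal form of a scalar times a rotation. Setting $a:=\partial_1\varphi_1=\partial_2\varphi_2$ and $c:=\partial_2\varphi_1=-\partial_1\varphi_2$, a one-line calculation gives
$$
n\cdot\nabla\varphi \;=\; \bigl(n_1 a + n_2 c,\; n_2 a - n_1 c\bigr),
$$
whence $|n\cdot\nabla\varphi|^2=(n_1^2+n_2^2)(a^2+c^2)=a^2+c^2=|\nabla\varphi_1|^2$. This handles the magnitude claim.

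For the direction, I would perform the same computation with the tangent vector $n^\bot=(n_2,-n_1)$ and observe the simple identity $n^\bot\cdot\nabla\varphi=(n\cdot\nabla\varphi)^\bot$ (in the sense of \eqref{ou1r}). Since $n^\bot$ is tangent to $\partial\Omega$ at $x_0$ and $\varphi$ maps $\partial\Omega$ onto $\partial\mathbb{D}$ by Lemma \ref{333}(i), the pushforward $n^\bot\cdot\nabla\varphi$ is tangent to $\partial\mathbb{D}$ at $\varphi(x_0)$; applying $\bot$ once more forces $n\cdot\nabla\varphi$ to be orthogonal to $\partial\mathbb{D}$ at $\varphi(x_0)$, hence parallel to $\widetilde n$. (This is essentially the angle-preservation property Lemma \ref{333}(iv) applied to the pair $n$, $n^\bot$, but going through Cauchy--Riemann seems the cleanest.)

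The remaining item is to fix the sign. The Jacobian determinant is $\det\nabla\varphi=a^2+c^2=|\nabla\varphi_1|^2$, which is strictly positive everywhere on $\bar\Omega$ because \eqref{3355} prevents $\nabla\varphi$ from degenerating, so $\varphi$ is orientation-preserving and is a diffeomorphism mapping $\Omega$ to $\mathbb{D}$. For small $t>0$ we have $x_0-tn\in\Omega$ and a Taylor expansion
$$
\varphi(x_0-tn)\;=\;\varphi(x_0)\;-\;t\,n\cdot\nabla\varphi(x_0)\;+\;O(t^2)\;\in\;\mathbb{D},
$$
so $n\cdot\nabla\varphi(x_0)$ must point into the exterior of $\mathbb{D}$, i.e.\ along $+\widetilde n$ rather than $-\widetilde n$. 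Combining the three ingredients gives \eqref{qi01}. I do not expect any substantial obstacle here; the statement really amounts to the familiar fact that a holomorphic diffeomorphism's derivative acts on an outward normal as a positive conformal dilation of the outward normal.
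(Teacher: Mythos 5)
Your argument is correct, and it differs from the paper's in how the two non-computational steps are handled. The magnitude computation is the same in both: the Cauchy--Riemann relations $\partial_1\varphi_1=\partial_2\varphi_2$, $\partial_2\varphi_1=-\partial_1\varphi_2$ give $|n\cdot\nabla\varphi|^2=|\nabla\varphi_1|^2$ exactly as in the paper. For the direction, the paper introduces two curves $\gamma$, $\gamma_0$ (one transversal with velocity $n$, one lying in $\partial\Omega$) and invokes the abstract angle-preservation property of Lemma \ref{333}(iv) to conclude that $\frac{d}{ds}\varphi(\gamma(s))|_{s=0}=n\cdot\nabla\varphi(x_0)$ is orthogonal to the image tangent; you instead derive the algebraic identity $n^\bot\cdot\nabla\varphi=(n\cdot\nabla\varphi)^\bot$ directly from Cauchy--Riemann and combine it with the fact that $\varphi$ maps $\partial\Omega$ onto $\partial\mathbb{D}$, which is a more self-contained route that makes the conformality enter only through the explicit form of the Jacobian. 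Finally, you add an orientation argument (positivity of $\det\nabla\varphi$ plus the Taylor expansion of $\varphi(x_0-tn)$) to rule out $n\cdot\nabla\varphi(x_0)=-|\nabla\varphi_1(x_0)|\widetilde n$; the paper's proof passes over this sign determination silently, asserting the \emph{outer} normal without justification, so your version actually closes a small gap. The only cost of your approach is that it is specific to the two-dimensional conformal setting, whereas the paper's curve-based argument reads off the coordinate-free angle-preservation statement it has already recorded.
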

\begin{proof}  Let $\gamma(s):[-1, 0]\rightarrow\bar\Omega$ be a smooth curve satisfying
\begin{equation*}
\gamma(s)\in\Omega\,\, \text{for any} \,\,s\in [-1, 0),\,\, \text{and}\,\,\gamma(0)=x_0, \,\,\, \frac{d}{ds}\gamma(s)|_{s=0}=n.
\end{equation*}
 Suppose $\gamma_0(s):(-1, 1)\rightarrow\partial\Omega$ is a curve which lies entirely on the boundary $\partial\Omega$ satisfying
\begin{equation*}
\gamma_0(0)=x_0,\,\, \frac{d}{ds}\gamma_0(s)|_{s=0}=v.
\end{equation*}
Then $v$ is the tangent vector of $\partial\Omega$ at $x_0$, and
\begin{equation*}
\langle \frac{d}{ds}\gamma(s), \frac{d}{ds}\gamma_0(s)\rangle|_{s=0}=\langle n, v\rangle=0.
\end{equation*}
By Lemma \ref{333}, we have
\begin{equation*}
\langle\frac{d}{ds}\varphi(\gamma(s)), \frac{d}{ds}\varphi(\gamma_0(s))\rangle|_{s=0}=\langle\frac{d}{ds}\gamma(s), \frac{d}{ds}\gamma_0(s)\rangle|_{s=0}
=0,
\end{equation*}
which implies that $\frac{d}{ds}\varphi(\gamma(s))|_{s=0}$ is the normal vector of $\partial\mathbb{D}$ at $\varphi(x_0)$ since
$\frac{d}{ds}\varphi(\gamma_0(s))|_{s=0}$ is just the tangent vector of $\partial\mathbb{D}$ at $\varphi(x_0)$.

we check that
\begin{equation*}\ba
\frac{d}{ds}\varphi(\gamma(s))|_{s=0} =(\frac{d}{ds}\gamma(s)|_{s=0}\cdot\nabla)\varphi(x_0) =n\cdot\nabla\varphi(x_0),\ea
\end{equation*}
and by Cauchy-Riemann equations \eqref{001},
\begin{equation*}
\begin{split}
|n\cdot\nabla\varphi|^2&=\big((n\cdot\nabla)\varphi_1\big)^2+\big((n\cdot\nabla)\varphi_2\big)^2\\
&=(n_1\cdot\partial_1\varphi_1)^2+2n_1n_2\cdot\partial_1\varphi_1\partial_2\varphi_1+(n_2\cdot\partial_2\varphi_1)^2+\\
&\quad(n_1\cdot\partial_1\varphi_2)^2+2n_1n_2\cdot\partial_1\varphi_2\partial_2\varphi_2+(n_2\cdot\partial_2\varphi_2)^2\\
&=|\partial_1\varphi_1|^2+|\partial_2\varphi_1|^2=|\nabla\varphi_1|^{2}=|\nabla\varphi_2|^{2}.
\end{split}
\end{equation*}
As a result,
\begin{equation*}
n\cdot\nabla\varphi(x_0)=|\nabla\varphi_1(x_0)|\,\widetilde{n},
\end{equation*}
where $\widetilde{n}$ is the unit outer normal of $\mathbb{D}$ at $\varphi(x_0)$.
\end{proof}

\begin{remark}
With the boundary condition $u\cdot n=0$ on $\partial\Omega$, we can check that for any $y_0\in\partial\Omega$, $\varphi(y_0)$ and $(u\cdot\nabla)\varphi(y_0)$ are the outer normal and tangent vector at $\varphi(y_0)$ on $\partial\mathbb{D}$ respectively. Consequently,\begin{equation}
u_i(y_0)\cdot\partial_i\varphi_j(y_0)\cdot\varphi_j(y_0)=\langle(u\cdot\nabla)\varphi(y_0), \varphi(y_0)\rangle=0,\label{1}
\end{equation}
which is an important observation for further work.
\end{remark}

\section{\label{sec2}A priori estimates (I): upper bound of the density}

 In this section and the next,  we  always assume that $(\rho, u)$ is the strong solution to \eqref{11}-\eqref{15} on $\Omega\times(0, T)$ whose existence is guaranteed by Lemma \ref{u21}.

We define the effective viscous flux $F$  and the vorticity $\omega$ in usual manner:
\begin{equation}\label{303}
 F\triangleq(2\mu+\lambda(\rho))\mathrm{div}u- P , \, \,
\omega\triangleq\nabla^\bot\cdot u=\partial_2u_1-\partial_1u_2.
\end{equation}

Then, we set
\begin{equation}
A_1^2(t)\triangleq 1+\int_\Omega\bigg(\omega^2(t)+\frac{F^2(t)}{2\mu+\lambda(\rho(t))}\bigg)dx,
\end{equation}
\begin{equation}
A_2^2(t)\triangleq\int_\Omega\rho(t)|\dot{u}(t)|^2dx,  \label{305}
\end{equation}
and
\begin{equation}
R_T\triangleq 1+\sup_{0\leq t\leq T}\|\rho(t)\|_{L^\infty}.
\end{equation}

We now state the standard energy estimate.
\begin{lemma}\label{3111}
There exists a positive constant $C$ depending only on $  \gamma, \|\rho_0\|_{L^\gamma},$   and $\|\sqrt{\rho_0}u_0\|_{L^2}$ such that
\begin{equation}
\sup_{0\leq t\leq T}\int_\Omega(\rho|u|^2+\rho^\gamma)dx+\int_0^T\int_\Omega\left( (2\mu+\lambda(\rho ))(\mathrm{div}u  )^2+\mu\omega^2 \right)dxdt\leq C.  \label{30}
\end{equation}
\end{lemma}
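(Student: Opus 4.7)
\textbf{Proof proposal for Lemma \ref{3111}.} The plan is the usual basic energy identity: multiply the momentum equation in \eqref{11} by $u$, integrate over $\Omega$, use the mass equation to rewrite the kinetic and pressure contributions as time derivatives, and then check that the boundary terms produced by integration by parts in the viscous terms all vanish thanks to the two pieces of the Navier-slip condition \eqref{15}. After that, integration in time gives the asserted estimate.

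First, I would handle the convective side. Writing $\rho\dot u\cdot u=\frac{1}{2}(\rho|u|^{2})_{t}+\frac{1}{2}\mathrm{div}(\rho u|u|^{2})$ via $\eqref{11}_1$, the divergence term integrates to a boundary integral of $\rho|u|^{2}(u\cdot n)$, which vanishes by $u\cdot n=0$, so
\[
\int_\Omega \rho\dot u\cdot u\,dx=\frac{1}{2}\frac{d}{dt}\int_\Omega \rho|u|^{2}dx.
\]
Similarly, multiplying $\eqref{11}_1$ by $\gamma\rho^{\gamma-1}$ and integrating gives
\[
\frac{d}{dt}\int_\Omega \rho^{\gamma}dx=-(\gamma-1)\int_\Omega \rho^{\gamma}\mathrm{div}u\,dx,
\]
so integrating the pressure gradient against $u$ and using $u\cdot n=0$ yields
\[
\int_\Omega \nabla P\cdot u\,dx=-\int_\Omega P\,\mathrm{div}u\,dx=\frac{1}{\gamma-1}\frac{d}{dt}\int_\Omega \rho^{\gamma}dx.
\]

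Next, I would rewrite the viscous part using the 2D identity $\Delta u=\nabla\,\mathrm{div}\,u+\nabla^{\bot}\omega$ (with $\nabla^{\bot}$ as in \eqref{ou1r} and $\omega$ as in \eqref{303}), so that
\[
\mu\Delta u+\nabla\bigl((\mu+\lambda)\mathrm{div}u\bigr)=\nabla\bigl((2\mu+\lambda)\mathrm{div}u\bigr)+\mu\nabla^{\bot}\omega.
\]
Testing against $u$ and integrating by parts produces two boundary integrals: one is $\int_{\partial\Omega}(2\mu+\lambda)\mathrm{div}u\,(u\cdot n)\,ds$, which is zero by $u\cdot n=0$; the other comes from $\mu\int\nabla^{\bot}\omega\cdot u\,dx$ and equals $\mu\int_{\partial\Omega}\omega\,(u\cdot n^{\bot})\,ds$ with $n^{\bot}$ as in \eqref{q1w}, which vanishes since $\mathrm{curl}\,u=\omega=0$ on $\partial\Omega$. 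What survives is exactly $-\int_\Omega(2\mu+\lambda)(\mathrm{div}u)^{2}dx-\mu\int_\Omega\omega^{2}dx$. This step is where the specific form of the Navier-slip condition \eqref{15} matters: both pieces $u\cdot n=0$ \emph{and} $\mathrm{curl}\,u=0$ are needed to kill the two boundary terms, and this is essentially the only subtlety in the whole argument.

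Combining the three identities gives the pointwise-in-time relation
\[
\frac{d}{dt}\int_\Omega\Bigl(\tfrac{1}{2}\rho|u|^{2}+\tfrac{1}{\gamma-1}\rho^{\gamma}\Bigr)dx+\int_\Omega\bigl[(2\mu+\lambda(\rho))(\mathrm{div}u)^{2}+\mu\omega^{2}\bigr]dx=0.
\]
Integrating in time from $0$ to $t\le T$ and using $m_{0}=\rho_{0}u_{0}$ to rewrite the initial kinetic energy as $\tfrac12\|\sqrt{\rho_{0}}u_{0}\|_{L^{2}}^{2}$ yields \eqref{30} with a constant $C$ depending only on $\gamma$, $\|\rho_{0}\|_{L^{\gamma}}$ and $\|\sqrt{\rho_{0}}u_{0}\|_{L^{2}}$, as claimed. \thatsall
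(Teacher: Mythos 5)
Your proposal is correct and follows essentially the same route as the paper: the paper likewise derives the pressure identity from the mass equation (in the equivalent renormalized form $P_t+\mathrm{div}(Pu)+(\gamma-1)P\,\mathrm{div}u=0$), rewrites the viscous term as $\nabla((2\mu+\lambda)\mathrm{div}u)+\mu\nabla^{\bot}\omega$, tests against $u$, and uses both parts of \eqref{15} to kill the boundary terms before integrating in time. No gaps.
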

\begin{proof}
It is easy to check that
\begin{equation}
P_t+\mathrm{div}(Pu)+(\gamma-1)P\mathrm{div}u=0,  \label{31}
\end{equation}
due to $\eqref{11}_1$.  Then,  integrating \eqref{31} over $\Omega$ and using slip boundary condition \eqref{15} one gets
\begin{equation*}
\frac{d}{dt}\int_\Omega \frac{\rho^\gamma}{\gamma-1}dx+\int_\Omega P\mathrm{div}udx = 0.
\end{equation*}
Since $\Delta u=\nabla \mathrm{div}u+\nabla^\bot\omega$,  we rewrite the equation of conservation of momentum $\eqref{11}_2$ as
\begin{equation}
\rho\dot{u}+\nabla P=\nabla((2\mu+\lambda(\rho)) \mathrm{div}u)+\mu\nabla^\bot\omega.  \label{32}
\end{equation}
Multiplying \eqref{32} by $u$ and integrating over $\Omega$,  together with the boundary condition \eqref{15},  one gets
\bnn
\frac{d}{dt}\int_\Omega\left(\frac12\rho|u|^2+ \frac{\rho^\gamma}{\gamma-1}\right)dx+\int_\Omega\left( (2\mu+\lambda(\rho ))(\mathrm{div}u  )^2+\mu\omega^2 \right)dx=0.
\enn
Integrating this over $(0, T)$ yields \eqref{30}.
\end{proof}

Next, we state a known result
concerning the  estimate on the $L^\infty(0,T;L^p(\Omega))$-norm of the density  whose proof is similar to that of  \cite[(36)]{vaigant1995}.
 \begin{lemma}[\cite{vaigant1995}]\label{004}
Let $\beta >1.$ Then, for any $1\leq p<\infty$, there is a constant $C$ depending on $ \Omega, T,\mu,\beta,\gamma,\|\rho_0\|_{L^{\infty}}$, and $\|u_0\|_{H^1}$,
such that
\begin{equation}\label{qp80}
\|\rho\|_{L^{p}}\leq C(T)p^{\frac{2}{\beta-1}},
\end{equation}
where and in what follows, we  use the convention that $C$ denotes a generic positive constant depending on  $ \Omega, T,\mu,\beta,\gamma,\|\rho_0\|_{L^{\infty}}$, and $\|u_0\|_{H^1}$, and we write $C(p)$ to emphasize that $C$ depends on $p.$
\end{lemma}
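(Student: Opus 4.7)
The plan is to adapt the Vaigant--Kazhikov argument to our simply connected $C^{2,1}$-domain equipped with the slip boundary. First I test the mass equation $\rho_t + \mathrm{div}(\rho u) = 0$ against $p\rho^{p-1}$ and integrate over $\Omega$. The boundary contribution produced by integrating by parts $\int u \cdot \nabla \rho^p\,dx$ vanishes by $u \cdot n = 0$ on $\partial\Omega$, giving the clean identity
\[
\frac{d}{dt}\int_\Omega \rho^p\,dx + (p-1)\int_\Omega \rho^p\,\mathrm{div}\,u\,dx = 0.
\]
Using the definition of the effective viscous flux in \eqref{303} to substitute $\mathrm{div}\,u = (F+\rho^\gamma)/(2\mu+\rho^\beta)$, I split off a coercive dissipation from the flux contribution:
\[
\frac{d}{dt}\int_\Omega \rho^p\,dx + (p-1)\int_\Omega \frac{\rho^{p+\gamma}}{2\mu+\rho^\beta}\,dx
 = -(p-1)\int_\Omega \frac{\rho^p F}{2\mu+\rho^\beta}\,dx.
\]

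Next I estimate the right-hand side. The key algebraic point is that the kernel $\rho^p/(2\mu+\rho^\beta)$ is dominated by $C\rho^{p-\beta}$ in the high-density regime while the left-hand dissipation carries the stronger weight $\rho^{p+\gamma-\beta}$; the gap of $\rho^\gamma$ is what allows a Young-inequality absorption provided $\beta > 1$. Concretely, I split $F = F_{+} - P$-type decompositions and apply H\"older's inequality in the form $\int \rho^{p-\beta}|F|\,dx \le \|\rho\|_{L^{(p-\beta)q}}^{p-\beta}\|F\|_{L^{q'}}$ with conjugate exponents chosen so that (after using Young on the dissipation side) a fraction $\varepsilon(p-1)\int \rho^{p+\gamma}/(2\mu+\rho^\beta)\,dx$ can be absorbed into the left. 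The norm $\|F\|_{L^{q'}}$ is controlled, via the Neumann problem \eqref{qp18}, by the energy-level bounds from Lemma~\ref{3111} together with the div-curl estimate of Lemma~\ref{lm22} and the Moser-type Sobolev inequality of \eqref{25}; it is precisely the factor $p^{1/2}$ in \eqref{25} that propagates to the final $p$-power.

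The resulting differential inequality for $y(t) \triangleq \int_\Omega \rho^p\,dx$ has the schematic form
\[
y'(t) + c\, y(t)^{1+\sigma} \le C\,p^{\alpha},
\]
where $\sigma$ and $\alpha$ are explicit rational functions of $p,\beta,\gamma$ arising from the H\"older/Young exponents. Integrating with respect to $t$ and optimizing in $\sigma$ yields $\|\rho(t)\|_{L^p}\le C(T)p^{2/(\beta-1)}$ once the exponents are chosen to balance the $\beta$-gap between dissipation and source.

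The main obstacle is the careful bookkeeping of all $p$-dependent constants. Each invocation of \eqref{25} or of the elliptic estimate for $F$ introduces its own factor of $p^{1/2}$ (or a power thereof), and one must chain these through without picking up spurious exponential-in-$p$ growth. The scaling $p^{2/(\beta-1)}$ is sharp in the sense that it emerges as the unique fixed point of the optimization; any weaker use of the dissipation loses it. The passage from the rectangle of \cite{vaigant1995} to a general simply connected domain requires essentially only replacing the periodic Riesz estimates for $F$ by the Neumann-type estimate for \eqref{qp18}, which is standard on $C^{2,1}$ domains and does not affect the $p$-scaling.
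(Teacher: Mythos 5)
The paper does not actually prove this lemma --- it is quoted from Vaigant--Kazhikov with the remark that the proof is ``similar to that of (36)'' there --- so your proposal has to stand on its own. Your opening steps (renormalizing the mass equation, the boundary term vanishing by $u\cdot n=0$, and substituting $\mathrm{div}\,u=(F+P)/(2\mu+\lambda)$) are correct. The decisive gap is a circularity in your control of $\|F\|_{L^{q'}}$. At the point where \eqref{qp80} is needed, the only available input is the elementary energy estimate \eqref{30}. The Neumann problem \eqref{qp18} has data $\rho\dot u$, and every estimate it yields (e.g.\ \eqref{336}, \eqref{311}) is in terms of $\|\rho\dot u\|_{L^r}$, i.e.\ of $A_2$ and $R_T$; but the bound on $A_2$ (Proposition \ref{33}) and the $L^p$ gradient estimate (Lemma \ref{005}) are themselves proved \emph{using} \eqref{qp80}. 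Likewise, applying \eqref{25} to $F$ presupposes $\|F\|_{H^1}$, hence again $\|\rho\dot u\|_{L^2}$; even $\|F\|_{L^2}$ is not energy-level, since $\|(2\mu+\lambda)\mathrm{div}u\|_{L^2}$ requires an $L^\infty$-type weight on $\rho$ and $\|P\|_{L^2}$ requires $\rho\in L^{2\gamma}$. So the quantity you use to close the differential inequality is exactly what the lemma is supposed to deliver.

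Two further points would sink the argument even if $\|F\|_{L^{q'}}$ were available. First, the ``coercive dissipation'' $(p-1)\int\rho^{p+\gamma}/(2\mu+\rho^\beta)\,dx$ behaves like $\int\rho^{p+\gamma-\beta}dx$ at high density, which is \emph{weaker} than $\int\rho^{p}dx$ whenever $\gamma\le\beta$; since the lemma is asserted for all $\gamma>1$, $\beta>1$, the Young-absorption step cannot work in that regime. Second, the exponent $2/(\beta-1)$ is asserted rather than derived: a schematic inequality $y'+c\,y^{1+\sigma}\le Cp^{\alpha}$ with $\sigma,\alpha$ coming from your H\"older/Young exponents would produce a $\gamma$-dependent power, whereas $2/(\beta-1)$ is independent of $\gamma$. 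In the Vaigant--Kazhikov argument this power arises from an induction on the integrability index in increments of $\beta-1$, each step costing a multiplicative factor of order $Cp^{2}$ (this is where the $p^{1/2}$ of \eqref{25} really enters), so that $\|\rho\|_{L^{p}}^{p}\lesssim(Cp^{2})^{p/(\beta-1)}$. No such iteration appears in your proposal, and a one-shot H\"older/Young absorption of the kind you describe cannot reproduce that scaling.
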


We also need extra integrability up on the momentum $\rho u$ where we modify the  proof   of \cite[Lemma 3.7]{huang2016existence} slightly due to  the boundary effect.
\begin{lemma}  \label{0001} There exists some  suitably small generic constant  $\nu_0\in (0,1)$   which depends only on $\mu$ and $\Omega$ such that
for \be\label{nu1q}\nu\triangleq R_T^{-\frac{\beta}{2}}\nu_{0},\ee   there is a constant $C $ depending on $\Omega$,  $T,\mu,\beta,\gamma, \|\rho_0\|_{L^{\infty}}$, and $\|u_0\|_{H^1}$ such that
\begin{equation}\label{qp90}
\sup_{0\leq t\leq T}\int_\Omega\rho |u|^{2+\nu}dx\leq C .
\end{equation}
\end{lemma}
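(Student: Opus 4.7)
The plan is to adapt the test-function argument from \cite[Lemma 3.7]{huang2016existence} with the necessary boundary modifications. The starting point is to multiply the momentum equation $\eqref{11}_2$ by $(2+\nu)|u|^\nu u$ and integrate over $\Omega$. Using the continuity equation $\eqref{11}_1$ to move the time derivative outside, and the reformulation $\rho\dot u=\nabla F+\mu\nabla^\bot\omega$ recorded in \eqref{32}, one arrives at
\begin{equation*}
\frac{d}{dt}\int_\Omega \rho|u|^{2+\nu}\,dx=(2+\nu)\int_\Omega (\nabla F+\mu\nabla^\bot\omega)\cdot |u|^\nu u\,dx.
\end{equation*}
The integration by parts on the right produces two boundary integrals. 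The first, $\int_{\partial\Omega}F(u\cdot n)|u|^\nu\,dS$, vanishes directly by the slip condition $u\cdot n=0$; the second, $\int_{\partial\Omega}\omega(u\cdot n^\bot)|u|^\nu\,dS$, vanishes because $\omega=\mathrm{curl}\,u=0$ on $\partial\Omega$ by \eqref{15}. This simultaneous cancellation of both boundary contributions is precisely what makes the interior Huang-Li computation portable to the Navier-slip setting.

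After these integrations by parts, using $\mathrm{div}\,u=(F+P)/(2\mu+\lambda)$, one obtains an identity of the schematic form
\begin{equation*}
\frac{d}{dt}\int_\Omega \rho|u|^{2+\nu}dx+(2+\nu)\int_\Omega |u|^\nu\left(\frac{F^2}{2\mu+\lambda}+\mu\omega^2\right)dx=-(2+\nu)\int_\Omega \frac{FP|u|^\nu}{2\mu+\lambda}dx+\nu\, R,
\end{equation*}
where the remainder $R$ arises from derivatives falling on the weight $|u|^\nu$ and is bounded by $C\int |u|^\nu|\nabla u|(|F|+|\omega|)\,dx$. The pressure-flux cross term is split via Cauchy-Schwarz, half being absorbed into the $F^2$ dissipation on the left and the residual $\int P^2|u|^\nu/(2\mu+\lambda)\,dx$ being controlled by a Young-type bound $\rho^{2\gamma}|u|^\nu\le \varepsilon \rho|u|^{2+\nu}+C(\varepsilon)\rho^{\gamma(2+\nu)-\nu/2}$, with the last factor integrable in time thanks to Lemma \ref{004}.

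The main obstacle is the absorption of $\nu R$. A further Cauchy-Schwarz reduces it to controlling $\int|u|^\nu|\nabla u|^2\,dx$; via a weighted Hodge decomposition this is dominated by $\int|u|^\nu(|\mathrm{div}\,u|^2+\omega^2)\,dx$ plus a boundary piece that must be converted to an interior integral using the Cai-Li trick \eqref{16} (replacing $u$ on $\partial\Omega$ by $(u\cdot n^\bot)n^\bot$ and rewriting as a $\nabla^\bot$ integral). Splitting $|\mathrm{div}\,u|^2\le 2(F^2+P^2)/(2\mu+\lambda)^2$ and comparing with the available dissipation $\int|u|^\nu F^2/(2\mu+\lambda)\,dx$ costs a factor of $\|1/(2\mu+\lambda)\|_{L^\infty}^{1/2}\le CR_T^{\beta/2}$. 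The net effect is a bound $\nu R\le C\nu R_T^{\beta/2}\cdot(\mathrm{dissipation})+C\nu\cdot(\mathrm{time\ integrable\ terms})$, so that the calibration $\nu=R_T^{-\beta/2}\nu_0$ with $\nu_0$ sufficiently small depending only on $\mu$ and $\Omega$ forces $C\nu R_T^{\beta/2}\le 1/2$ and allows absorption. With $R$ absorbed, Gronwall's inequality applied to the resulting differential inequality, together with Lemmas \ref{3111} and \ref{004}, yields \eqref{qp90}.
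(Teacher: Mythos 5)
Your proposal is correct and follows essentially the same route as the paper: test the momentum equation with $|u|^\nu u$, let the slip conditions \eqref{15} kill both boundary integrals, absorb the $\nu$-weighted remainder through the weighted div-curl inequality (the paper's Lemma \ref{a004}) with the calibration $\nu=R_T^{-\beta/2}\nu_0$, control the pressure contribution by Young's inequality together with Lemma \ref{004}, and close with Gronwall. Two small inaccuracies in your bookkeeping are worth fixing. First, the factor $R_T^{\beta/2}$ does not come from comparing $|\mathrm{div}\,u|^2\le 2(F^2+P^2)/(2\mu+\lambda)^2$ with the dissipation $\int|u|^\nu F^2/(2\mu+\lambda)\,dx$ --- since $\lambda\ge 0$ that comparison costs only the harmless constant $\|1/(2\mu+\lambda)\|_{L^\infty}\le(2\mu)^{-1}$; the genuine loss occurs when Cauchy--Schwarz splits the remainder $\nu\int|F|\,|u|^\nu|\nabla u|\,dx$ against $F^2/(2\mu+\lambda)$, which forces the factor $(2\mu+\lambda)\le CR_T^{\beta}$ onto the $|u|^\nu|\nabla u|^2$ side, so that $\nu^2\|2\mu+\lambda\|_{L^\infty}\le C\nu_0^2$ is exactly what the calibration delivers. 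Second, the weighted estimate $\int|u|^\nu|\nabla u|^2dx\le C\int|u|^\nu((\mathrm{div}u)^2+\omega^2)dx$ needs no boundary piece and no Cai--Li identity: one simply applies the div-curl bound \eqref{24} to the field $|u|^{\nu/2}u$, which satisfies $|u|^{\nu/2}u\cdot n=0$ on $\partial\Omega$, and absorbs the $O(\nu^2)$ commutator errors from differentiating the weight.
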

\begin{proof}  First, combining   \eqref{24}, \eqref{qp80}, and Poincar\'{e}'s inequality gives
\begin{equation}
 \|u\|^2_{H^1}\leq C\|\nabla u\|^2_{L^2}\leq CA_1^2(t) \leq CR_T^\beta \|\nabla u\|^2_{L^2}+C,  \label{310}
\end{equation}
which together with \eqref{30} and \eqref{qp80} implies \be \label{qp87}\int_0^T\left(\|  u\|_{H^1}^2+A_1^2\right)dt \le C.\ee

Then, following the  proof   of \cite[Lemma 3.7]{huang2016existence}, multiplying \eqref{32} by $ |u|^\nu u$   and integrating the resulting equality over $\Omega$ by parts, we arrive at
\begin{equation*}
\begin{split}
& \frac{1}{2+\nu}\frac{d}{dt}\int\rho |u|^{2+\nu}dx+ \int|u|^\nu\big((2\mu+\lambda)(\mathrm{div}u)^2+\mu\omega^2\big)dx\\
&\leq  \nu\int(2\mu+\lambda)|\mathrm{div}u||u|^{\nu}|\nabla u|dx+ \nu\mu\int|u|^{\nu}|\nabla u|^2dx+C
\int\rho^\gamma|u|^\nu|\nabla u|dx\\
&\leq \frac{1}{2}\int |u|^\nu(\mathrm{div}u)^2dx+\frac{\nu_0^2(\mu+1)}{2}\int|u|^\nu|\nabla u|^2dx+\int|\nabla u|^{2}dx\\
&\quad+C\int\rho|u|^{2+\nu}dx +C\int\rho^{\frac{4+2\nu}{2-\nu}\gamma-\frac{2\nu}{2-\nu}}dx,
\end{split}
\end{equation*}
which together with   \eqref{qp87}, Lemma \ref{004}, and the following  Lemma \ref{a004} yields \eqref{qp90} and   finishes the  proof of Lemma \ref{0001}.
\end{proof}

\begin{lemma}\label{a004}
 There exist positive constants $\tilde \nu$ and $  C$  both depending only on $\Omega$ such that for any $\nu\in (0,\tilde \nu)$,
\begin{equation}
\int|u|^{\nu}|\nabla u|^2dx\leq C\int|u|^{\nu}\big((\mathrm{div}u)^2+\omega^2)dx.
\end{equation}
\end{lemma}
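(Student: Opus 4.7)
The plan is to apply the div-curl estimate of Lemma~\ref{lm22} to the auxiliary vector field
\[
w \triangleq |u|^{\nu/2} u,
\]
which inherits $w\cdot n=0$ on $\partial\Omega$ directly from $u$. Since $\Omega$ is simply connected and $w\in H^1(\Omega)$ (verified at the end), Lemma~\ref{lm22} with $p=q=2$ supplies a constant $C_0=C_0(\Omega)$ such that
\[
\int_\Omega|\nabla w|^2\,dx \le C_0\int_\Omega\bigl(|\mathrm{div}\,w|^2+|\mathrm{curl}\,w|^2\bigr)\,dx.
\]
I then want to establish the pointwise sandwich
\[
|\nabla w|^2 \ge |u|^\nu|\nabla u|^2,\qquad |\mathrm{div}\,w|^2+|\mathrm{curl}\,w|^2 \le 2|u|^\nu\bigl((\mathrm{div}\,u)^2+\omega^2\bigr) + \nu^2|u|^\nu|\nabla u|^2,
\]
so that after integration and a smallness choice for $\nu$ the lemma follows by absorption.

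Both inequalities rest on the a.e.\ identity $\nabla(|u|^{\nu/2})=\tfrac{\nu}{4}|u|^{\nu/2-2}\nabla(|u|^2)$ and the Leibniz formula $\partial_k w_i=\partial_k(|u|^{\nu/2})\,u_i+|u|^{\nu/2}\partial_k u_i$. Expanding $(\partial_k w_i)^2$ and summing over $k,i$, the cross terms and the square of $\partial_k(|u|^{\nu/2})\,u_i$ combine into the manifestly non-negative quantity $\bigl(\nu+\tfrac{\nu^2}{4}\bigr)|u|^{\nu-2}\sum_k(u\cdot\partial_k u)^2$, yielding
\[
|\nabla w|^2 = |u|^\nu|\nabla u|^2 + \Bigl(\nu+\tfrac{\nu^2}{4}\Bigr)|u|^{\nu-2}\sum_{k=1}^{2}(u\cdot\partial_k u)^2 \ge |u|^\nu|\nabla u|^2.
\]
For the divergence and curl of $w$, the formulas
\[
\mathrm{div}\,w=|u|^{\nu/2}\mathrm{div}\,u+\nabla(|u|^{\nu/2})\cdot u,\qquad \mathrm{curl}\,w=|u|^{\nu/2}\omega+\nabla^{\bot}(|u|^{\nu/2})\cdot u,
\]
together with $|\nabla(|u|^{\nu/2})\cdot u|\le\tfrac{\nu}{2}|u|^{\nu/2}|\nabla u|$ (Cauchy-Schwarz) and $(a+b)^2\le 2a^2+2b^2$, give the stated upper bound on $|\mathrm{div}\,w|^2+|\mathrm{curl}\,w|^2$.

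Setting $A\triangleq\int|u|^\nu|\nabla u|^2\,dx$ and $B\triangleq\int|u|^\nu\bigl((\mathrm{div}\,u)^2+\omega^2\bigr)\,dx$, integrating the two pointwise estimates and plugging into Lemma~\ref{lm22} gives $A\le C_0(2B+\nu^2 A)$. Choosing $\tilde\nu$ so small that $C_0\tilde\nu^2\le 1/2$ (a quantity depending only on $\Omega$), the last term is absorbed into the left-hand side and yields $A\le 4C_0 B$, which is the desired estimate with $C=4C_0$. The only technical subtlety is justifying $w\in H^1(\Omega)$ in spite of the non-smoothness of $|u|^{\nu/2}$ at $\{u=0\}$; however, the bound $|\nabla w|\le C|u|^{\nu/2}|\nabla u|$ gives $\|\nabla w\|_{L^2}^2\lesssim A<\infty$ for the strong solutions at hand (Lemma~\ref{u21}), so Lemma~\ref{lm22} indeed applies rigorously.
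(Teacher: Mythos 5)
Your proof is correct and follows essentially the same route as the paper: both introduce the auxiliary field $|u|^{\nu/2}u$, which still satisfies the vanishing normal trace, apply the div-curl estimate of Lemma \ref{lm22} to it, and absorb the $O(\nu^2)\int|u|^\nu|\nabla u|^2dx$ error terms for $\nu$ small. Your version is marginally tidier in that the lower bound $|\nabla(|u|^{\nu/2}u)|^2\ge|u|^\nu|\nabla u|^2$ is an exact identity rather than the paper's lossier inequality, and you make the absorption step and the $H^1$ regularity of the auxiliary field explicit, but these are refinements of the same argument.
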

\begin{proof} First, direct computation  shows
\begin{equation}\label{qa3}
  \frac{1}{2}|u|^\nu |\nabla u|^2 \le |\nabla(|u|^{\frac{\nu}{2}} u)|^2+ \nu^2|u|^\nu|\nabla u|^2.
\ee

Next, let us calculate that
\begin{equation}\label{qa1}\ba
(\mathrm{div}(|u|^{\frac{\nu}{2}} u))^2&=\big(|u|^{\frac{\nu}{2}} \mathrm{div} u-\nabla |u|^{\frac{\nu}{2}}\cdot u\big)^2\\&
\leq C|u|^\nu \big(\mathrm{div} u\big)^2+ C\nu^2|u|^\nu|\nabla u|^2 .\ea
\end{equation}
Similarly, we have
\begin{equation}\label{qa2}
(\mathrm{curl}(|u|^{\frac{\nu}{2}} u))^2
\leq C|u|^\nu \big(\mathrm{curl} u\big)^2+C\nu^2|u|^\nu|\nabla u|^2.
\end{equation}

Finally, since $\left.|u|^{\frac{\nu}{2}}u\cdot n\right|_{\partial\Omega}=0$, we apply \eqref{24} to deduce
\begin{equation*}
\int_\Omega\left|\nabla (|u|^{\frac{\nu}{2}}u)\right|^2dx\leq
C(\Omega)\int_\Omega\left(\left|\mathrm{div}(|u|^{\frac{\nu}{2}}u)\right|^2 +\left|\mathrm{curl}(|u|^{\frac{\nu}{2}}u)\right|^2\right)dx,
\end{equation*} which together with  \eqref{qa3}--\eqref{qa2}  proves   Lemma \ref{a004}.
\end{proof}

Next, the following typical estimates on $\nabla u$ will be used frequently.
\begin{lemma}\label{005} For $p>2 $ and $\varepsilon>0,$ there exists some positive constant $C(p,\varepsilon)$ such that
\begin{equation}\label{qp31}
\|\nabla u\|_{L^{p} }\leq C(p, \varepsilon)R_T^{\frac{1}{2}-\frac{1}{p}+\varepsilon} A_1  \left(\frac{A_2^2}{ A_1^2}\right)^{\frac{1}{2}-\frac{1}{p}}
+C(p, \varepsilon)R_T^{\varepsilon} A_1 .
\end{equation}
\end{lemma}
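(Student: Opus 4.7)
The approach is to reduce $\|\nabla u\|_{L^p}$ to $L^p$-bounds on $\mathrm{div}\,u$ and $\omega$ via the div-curl estimate of Lemma \ref{lm22}, and then bound each piece through elliptic regularity for $F$ and $\omega$. Rewriting the momentum equation \eqref{32} as $\rho\dot u=\nabla F+\mu\nabla^{\bot}\omega$ and applying $\mathrm{div}$ and $\nabla^{\bot}\!\cdot$, one obtains
\[\Delta F=\mathrm{div}(\rho\dot u),\qquad \mu\Delta\omega=\nabla^{\bot}\!\cdot(\rho\dot u).\]
The slip condition \eqref{15} gives $\omega|_{\partial\Omega}=0$, and since $\omega$ vanishes on the boundary $\nabla\omega$ is parallel to $n$ there, forcing $\nabla^{\bot}\omega\cdot n=0$; hence $\partial F/\partial n=\rho\dot u\cdot n$ on $\partial\Omega$. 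Testing these elliptic equations and integrating by parts yields the basic energy bound $\|\nabla F\|_{L^2}+\mu\|\nabla\omega\|_{L^2}\leq C\|\rho\dot u\|_{L^2}\leq CR_T^{1/2}A_2$.

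For the vorticity, Poincar\'e's inequality (from $\omega|_{\partial\Omega}=0$) allows me to apply \eqref{25} with $\|\omega\|_{H^1}$ replaced by $\|\nabla\omega\|_{L^2}$, giving
\[\|\omega\|_{L^p}\leq Cp^{1/2}\|\omega\|_{L^2}^{2/p}\|\nabla\omega\|_{L^2}^{1-2/p}\leq CR_T^{1/2-1/p}A_1^{2/p}A_2^{1-2/p},\]
which already matches the first summand of the claim (no $\varepsilon$-loss needed). For the divergence I use $\mathrm{div}\,u=(F+P)/(2\mu+\lambda)$: since $\|P/(2\mu+\lambda)\|_{L^p}\leq C$ by Lemma \ref{004}, the task reduces to $\|F\|_{L^p}$. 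The elementary bound $\|F\|_{L^1}\leq\|2\mu+\lambda\|_{L^2}\|\mathrm{div}\,u\|_{L^2}+\|P\|_{L^1}\leq CA_1$ (exploiting $\|\rho^\beta\|_{L^2}\leq C$ from Lemma \ref{004} together with $\|\mathrm{div}\,u\|_{L^2}\leq CA_1$) yields $|\bar F|\leq CA_1$. Applying \eqref{25} to the zero-mean $F-\bar F$ with the crude bound $\|F\|_{L^2}^2\leq\|2\mu+\lambda\|_{L^\infty}\int F^2/(2\mu+\lambda)\,dx\leq CR_T^\beta A_1^2$ then gives
\[\|F-\bar F\|_{L^p}\leq Cp^{1/2}R_T^{(\beta-1)/p+1/2}A_1^{2/p}A_2^{1-2/p}.\]

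The main obstacle is that the exponent $(\beta-1)/p+1/2$ exceeds the target $1/2-1/p+\varepsilon$ by $\beta/p$, which is harmless only when $p\geq p_0\triangleq\beta/\varepsilon$. In that regime the bound above yields the claim directly, with the $CA_1$ from $|\bar F|$ absorbed into the second summand. For $p\in(2,p_0)$ I interpolate $\|\nabla u\|_{L^p}$ between the elementary bound $\|\nabla u\|_{L^2}\leq CA_1$ and the $L^{p_0}$-bound just established. Using $1/p=\theta/2+(1-\theta)/p_0$, a short computation shows that the interpolated $R_T$-exponent equals
\[\Big(\tfrac{1}{2}-\tfrac{1}{p_0}+\varepsilon\Big)\frac{1-2/p}{1-2/p_0}=\tfrac{1}{2}-\tfrac{1}{p}+\varepsilon\cdot\frac{1-2/p}{1-2/p_0}\leq\tfrac{1}{2}-\tfrac{1}{p}+\varepsilon,\]
and splitting by Young's inequality sends the pure $A_1$-contribution into the second summand $CR_T^\varepsilon A_1$ (using $A_1\geq 1$ to replace $A_1^{2/p}$ by $A_1$). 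This completes the plan.
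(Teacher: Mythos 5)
Your proof is correct, and its skeleton (div--curl reduction via Lemma \ref{lm22}, elliptic estimates for $F$ and $\omega$ coming from $\rho\dot u=\nabla F+\mu\nabla^{\bot}\omega$, then interpolation) is the same as the paper's; the genuine difference is in how the unwanted factor $R_T^{\beta/p}$ arising from $\|F\|_{L^2}\leq CR_T^{\beta/2}A_1$ is suppressed. The paper stays at the fixed exponent $p$ and uses a weighted three-norm interpolation
$\bigl\|\tfrac{F}{2\mu+\lambda}\bigr\|_{L^p}\leq \bigl\|\tfrac{F}{2\mu+\lambda}\bigr\|_{L^2}^{2/p-\varepsilon}\|F\|_{L^q}^{1-2/p+\varepsilon}$ with $q$ enormous, so that the weighted $L^2$-norm (controlled by $A_1$ with no $R_T$) carries almost all of the low-exponent weight and only an $\varepsilon$-fraction of the unweighted $\|F\|_{L^2}$ enters, yielding $R_T^{\beta\varepsilon/2}$. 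You instead accept the full loss $\beta/p$ in the exponent, observe it is at most $\varepsilon$ once $p\geq p_0=\beta/\varepsilon$, and recover $2<p<p_0$ by a second interpolation in the Lebesgue exponent between $\|\nabla u\|_{L^2}\leq CA_1$ and the $L^{p_0}$ bound; the bookkeeping ($A_1^{\theta}$ times the $L^{p_0}$ factors giving $A_1^{2/p}A_2^{1-2/p}$, and $R_T$-exponent $\tfrac12-\tfrac1p+\varepsilon(1-\theta)\leq\tfrac12-\tfrac1p+\varepsilon$) checks out, and splitting $(X+Y)^{1-\theta}\leq X^{1-\theta}+Y^{1-\theta}$ cleanly isolates the $CR_T^{\varepsilon}A_1$ term. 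Your explicit handling of the mean of $F$ through $\|F\|_{L^1}\leq CA_1$ mirrors what the paper does implicitly in \eqref{311}, and your separate, sharper treatment of $\|\omega\|_{L^p}$ (no $\varepsilon$-loss, using $\omega|_{\partial\Omega}=0$ and Poincar\'e) is also fine. Both routes are rigorous; yours is somewhat more elementary in avoiding the fractional-weight interpolation, at the cost of an extra interpolation layer in $p$.
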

\begin{proof}
First, we rewrite the momentum equations as
\begin{equation}
\rho\dot{u}=\nabla F+\mu\nabla^\bot\omega,   \label{34}
\end{equation} which together with  the boundary condition \eqref{15} yields that $F$ solves the Neumann problem \eqref{qp18} and that   $\omega$ solves the related Dirichlet problem:
\begin{equation}
\begin{cases}
  \mu\Delta\omega=\nabla^\bot\cdot(\rho\dot{u})& \mbox{ in } \Omega, \\
   \omega=0& \mbox{ on } \partial\Omega.  \end{cases}
\end{equation}

Then, standard $L^p$ estimate of elliptic equations (see\cite{gilbarg2015elliptic}, \cite[Lemma 4.27]{novotny2004introduction}) implies that for $k\geq 0$ and  $p\in(1, \infty)$,
\begin{equation}
\|\nabla F\|_{W^{k, p}}+\|\nabla w\|_{W^{k, p}}\leq C(p, k)\|\rho\dot{u}\|_{W^{k, p}}.  \label{336}
\end{equation}
In particular,  we have
\bnn
\|\nabla F\|_{L^2}+\|\nabla w\|_{L^2}\leq C\|\rho\dot{u}\|_{L^2}\leq CR_T^{1/2}A_2,
\enn
which together with the Poincar\'{e} inequality and \eqref{qp80} yields
\begin{equation} \label{311} \ba
\|F\|_{H^1}+\|\omega\|_{H^1}&\leq C(\|\nabla F\|_{L^2}+\|\nabla w\|_{L^2})+\frac{C}{|\Omega|}\int{F}dx\\& \leq C R_T^{1/2}A_2+CA_1 . \ea
\end{equation}

Finally, according to \eqref{24} and Lemma \ref{004}, we get
\begin{equation}\label{qp91}
\begin{split}
\|\nabla u\|_{L^{p} }&\leq C(p)\big(\|\mathrm{div}u\|_{L^{p} }+\|\omega\|_{L^{p} }\big) \\
&\leq C(p) \left\|\frac{F}{2\mu+\lambda}\right\|_{L^{p} }+C(p) \|\omega\|_{L^{p} }+C(p) \\ &\le C(p)    \left\|\frac{F}{2\mu+\lambda}\right\|_{L^{2} }^{\frac{2}{p}-\varepsilon}\|F\|_{L^{\frac{ 2(1+\varepsilon)p-4}{p\varepsilon}} }^{-\frac{2}{p}+1+\varepsilon} +C(p)\|\omega\|_{L^{p}} +C(p)
\\&\le  C(p, \varepsilon)A_1^{\frac{2}{p}-\varepsilon}\|F\|_{L^{2} }^{\varepsilon} \|F\|_{H^1 }^{1-\frac{2}{p}}+C(p)A_1^{\frac{2}{p}}\| \omega\|_{H^1}^{1-\frac{2}{p}}+C(p)\\& \leq C(p, \varepsilon)R_T^{\frac{\beta\varepsilon}{2} }A_1^{\frac{2}{p} }  ( \|F\|_{H^1}+\|\omega\|_{H^1})^{1-\frac{2}{p}} +C(p),
\end{split}
\end{equation}  which together with \eqref{311} gives \eqref{qp31} and finishes the proof of Lemma \ref{005}.
\end{proof}

At present stage,  we are in a position to prove the following crucial estimate on the upper bound of $\mathrm{log}(1+\|\nabla u\|_{L^2})$ in terms of $R_T$ which turns out to be crucial   in obtaining the upper bound of the density.
\begin{proposition}\label{33}
For any $\alpha\in(0, 1)$,  there is a constant $C(\alpha)$ depending only on $\alpha,   \Omega , T, \mu,  \beta,  \gamma, $ $ \|\rho_0\|_{L^\infty}$,  and $\|u_0\|_{H^1}$ such that
\begin{equation}\label{qp40}
\sup_{0\leq t\leq T}\mathrm{log} A^2_1(t) +\int^T_0 \frac{A^2_2(t)}{ A^2_1(t)}dt\leq C(\alpha)R_T^{1 +\alpha }.
\end{equation}
\end{proposition}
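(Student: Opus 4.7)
The plan is to carry out a Hoff-type energy estimate: differentiate $A_1^2(t)$ in time so that $A_2^2(t)$ appears as a nonnegative dissipation term, absorb the remaining right-hand side into $C(\alpha)R_T^{1+\alpha}A_1^2$ plus an $L^1$-in-time piece, and finally divide by $A_1^2\ge 1$ and integrate on $(0,T)$. This will produce simultaneously the bound on $\log A_1^2$ and the bound on $\int_0^T A_2^2/A_1^2\,dt$ stated in \eqref{qp40}.

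I would start by multiplying \eqref{34}, namely $\rho\dot u=\nabla F+\mu\nabla^\perp\omega$, by $\dot u=u_t+u\cdot\nabla u$ and integrating on $\Omega$. After integration by parts, the boundary contributions $\int_{\partial\Omega}F(u_t\cdot n)\,dS$ and $\mu\int_{\partial\Omega}\omega(u_t\cdot n^\perp)\,dS$ both vanish: the first because $u\cdot n|_{\partial\Omega}=0$ forces $u_t\cdot n|_{\partial\Omega}=0$, the second because $\omega|_{\partial\Omega}=0$ by \eqref{15}. Using the algebraic relation $\mathrm{div}\,u=(F+P)/(2\mu+\lambda(\rho))$ to rewrite $\mathrm{div}\,u_t$, and then the continuity equation $\rho_t+\mathrm{div}(\rho u)=0$ to express $\lambda_t$ and $P_t$ through $u\cdot\nabla\rho$ and $\rho\,\mathrm{div}\,u$, one obtains
\[\frac{1}{2}\frac{d}{dt}\int_\Omega\Bigl(\frac{F^2}{2\mu+\lambda(\rho)}+\mu\omega^2\Bigr)\,dx+\int_\Omega\rho|\dot u|^2\,dx=J_1+J_2,\]
where $J_1$ gathers the convective pieces $\int_\Omega\nabla F\cdot(u\cdot\nabla u)\,dx$ and $\mu\int_\Omega\nabla^\perp\omega\cdot(u\cdot\nabla u)\,dx$, and $J_2$ collects the terms coming from $\lambda_t$ and $P_t$, each polynomial in $\rho$ and $\nabla u$.

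To estimate the right-hand side I would combine the elliptic bound $\|\nabla F\|_{L^2}+\|\nabla\omega\|_{L^2}\le CR_T^{1/2}A_2$ coming from \eqref{336}, the Gagliardo--Nirenberg type estimate \eqref{qp31} for $\|\nabla u\|_{L^p}$ at some $p>2$ close to $2$, and the integrability of $\rho$ and $\rho u$ supplied by Lemmas \ref{3111}, \ref{004}, and \ref{0001}. A typical piece of $J_1$ is controlled via
\[|J_1|\le C\bigl(\|\nabla F\|_{L^2}+\|\nabla\omega\|_{L^2}\bigr)\|u\|_{L^{q}}\|\nabla u\|_{L^p}\le \tfrac{1}{2}A_2^2+C(\alpha)R_T^{1+\alpha}A_1^2,\]
the last inequality coming from inserting \eqref{qp31} and then applying Young's inequality with exponents tuned to $\alpha$; the terms in $J_2$ are handled analogously, with $\beta>4/3$ and the $L^p$-bound \eqref{qp80} absorbing the $\lambda_t$-contributions. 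Combining gives
\[\frac{d}{dt}A_1^2(t)+A_2^2(t)\le C(\alpha)R_T^{1+\alpha}A_1^2(t)+g(t),\qquad \int_0^T g(t)\,dt\le C(\alpha)R_T^{1+\alpha},\]
after which dividing by $A_1^2\ge 1$ (so that the left side becomes $(\log A_1^2)'+A_2^2/A_1^2$) and integrating on $(0,T)$ yields \eqref{qp40}.

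The main obstacle is the precise book-keeping in the convective term $J_1$: the factor $R_T^{1/2}$ from $\|\nabla F\|_{L^2}\le CR_T^{1/2}A_2$ must be balanced against the $R_T$-growth of $\|\nabla u\|_{L^p}$ in \eqref{qp31}. Only by letting the parameter $\varepsilon$ in Lemma \ref{005} be taken arbitrarily small and choosing $p$ sufficiently close to $2$ can one bring the final exponent of $R_T$ down to $1+\alpha$ with $\alpha\in(0,1)$ arbitrary, rather than to a larger value; checking that every piece of $J_2$ can likewise be packed into either $R_T^{1+\alpha}A_1^2$ or an $L^1(0,T)$ remainder with norm bounded by $C(\alpha)R_T^{1+\alpha}$ requires repeated use of H\"older, \eqref{qp80}, and \eqref{qp90}.
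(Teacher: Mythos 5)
Your overall architecture --- multiply $\rho\dot u=\nabla F+\mu\nabla^{\bot}\omega$ by $\dot u$, produce a differential inequality $\frac{d}{dt}A_1^2+A_2^2\le(\text{RHS})$, then divide by $A_1^2\ge1$ and integrate --- is exactly the paper's, but the way you organize the integration by parts creates two gaps that prevent the estimate from closing.

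First, the convective term. You keep $J_1=\int_\Omega\nabla F\cdot(u\cdot\nabla u)\,dx$ un-integrated and estimate it by $\|\nabla F\|_{L^2}\|u\|_{L^{q}}\|\nabla u\|_{L^{p}}$. Since $\|\nabla F\|_{L^2}\le CR_T^{1/2}A_2$ already spends a full factor of $A_2$, and since in two dimensions $\|u\cdot\nabla u\|_{L^2}$ costs strictly more than $A_1^2$ (the embedding $H^1\hookrightarrow L^\infty$ fails, so combining \eqref{25} with \eqref{qp31} the best available bound is of the form $CA_1^{2-\sigma}A_2^{\sigma}$ with some $\sigma>0$), Young's inequality against $\tfrac12A_2^2$ leaves a remainder of order $R_T^{\cdots}A_1^{4+\delta}$ with $\delta>0$ --- not the $R_T^{1+\alpha}A_1^{2}$ you claim, and not even $A_1^4$. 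After dividing by $A_1^2$ one then needs $\int_0^TA_1^{2+\delta}\,dt$, which is not controlled: \eqref{qp87} only gives $\int_0^TA_1^2\,dt\le C$, and no bootstrapping through $\sup_tA_1^{\delta}$ closes the resulting inequality. The paper avoids this by integrating the convective part by parts as well, so that $\mathrm{div}\,\dot u$ appears and, via the identity \eqref{36}, the quadratic term becomes $\int_\Omega F\,\nabla u_1\cdot\nabla^{\bot}u_2\,dx$ with $F$ rather than $\nabla F$; interpolating $\|F\|_{L^{p}}$ for large $p$ against $\|F\|_{H^1}\le CR_T^{1/2}A_2+CA_1$ then yields exactly $CR_T^{(1+\alpha\beta)/2}A_2A_1^2\le\tfrac12A_2^2+CR_T^{1+\alpha\beta}A_1^4$, which is the borderline power that still closes.

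Second, your $J_2$ is not ``polynomial in $\rho$ and $\nabla u$.'' Expressing $\lambda_t$ and $P_t$ through the continuity equation produces $u\cdot\nabla\rho$ terms, and there is no a priori bound on $\nabla\rho$ at this stage of the argument. Those terms disappear only when the time derivative and the convective derivative are recombined into the material derivative, for which $\frac{D\rho}{Dt}=-\rho\,\mathrm{div}\,u$ --- again precisely the content of \eqref{36}. Finally, once you do integrate the convective term by parts (as you must to fix the first gap), the boundary term $\int_{\partial\Omega}Fu\cdot\nabla u\cdot n\,ds$ reappears: it does not vanish (only $u_t\cdot n$ does on $\partial\Omega$), and controlling it via \eqref{16} and the tangential-derivative identity used in \eqref{332} is one of the genuinely new points of the bounded-domain case that your proposal silently bypasses.
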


\begin{proof} We modify the  ideas of \cite{huang2016existence} to overcome the difficulties arising from the boundary. First, direct calculations show that
\begin{equation}
\nabla^\bot\cdot\dot{u} = \frac{D}{Dt}\omega-(\partial_1u\cdot\nabla)u_2+(\partial_2u\cdot\nabla)u_1=\frac{D}{Dt}\omega+\omega \mathrm{div}u,  \label{35}
\end{equation}
and that
\begin{equation}
\begin{split}
\mathrm{div}\dot{u}=&\frac{D}{Dt}\mathrm{div}u+(\partial_1u\cdot\nabla)u_1+(\partial_2u\cdot\nabla)u_2\\ \label{36}
=&\frac{D}{Dt}\bigg(\frac{F}{2\mu+\lambda}\bigg)+\frac{D}{Dt}\bigg(\frac{P }{2\mu+\lambda}\bigg)-2\nabla u_1\cdot\nabla^\bot u_2 + (\mathrm{div}u)^2.
\end{split}
\end{equation}

Then,
multiplying \eqref{34} by $2\dot{u}$ and integrating the resulting equality over $\Omega$,  we obtain after using
\eqref{35},  \eqref{36}, and the boundary condition \eqref{15} that
\begin{equation}\label{qiu23}
\begin{split}
\frac{d}{dt}A^2_1+2A^2_2=&-\mu\int_\Omega\omega^2\mathrm{div}udx+4\int_\Omega F\nabla u_1\cdot \nabla^\bot u_2dx-2\int_\Omega F(\mathrm{div}u)^2dx\\
&-\int_\Omega\frac{(\beta-1)\lambda-2\mu}{(2\mu+\lambda)^2}F^2\mathrm{div}udx+ 2\beta\int_\Omega\frac{\lambda P  }{(2\mu+\lambda)^2}F\mathrm{div}udx\\
&-2\gamma\int_\Omega\frac{P}{ 2\mu+\lambda }F\mathrm{div}udx
 +\int_{\partial\Omega}Fu\cdot\nabla u\cdot nds\triangleq\sum_{i=1}^7I_i.
\end{split}
\end{equation}

Now we estimate each $I_i$ as follows:

First, combining \eqref{25},   \eqref{310}, and  the H\"{o}lder inequality leads to
\begin{equation}\label{qiu24}\ba
I_1&\leq C\|\omega\|_{L^4}^2\|\mathrm{div}u\|_{L^2}
\\& \leq C \|\omega\|_{L^2}\|\nabla\omega\|_{L^2}A_1\\&\le  CA_1^2\|\nabla\omega\|_{L^2} .\ea
\end{equation}

Next, $I_2$ is the most difficult term which requires careful calculations. The major obstacle in the bounded domain is that we can't make use of common pairing between BMO and Hardy space   used by  \cite{huang2016existence}. We take alternative approach as follows:

For $\alpha\in (0,1),$ letting  $p\ge 8\beta/\alpha ,$ we use \eqref{qp91}, H\"older's and Sobolev's inequalities to get
\begin{equation}
\begin{split}|I_2|&\le
\int   F|\nabla u|^2 dx\\&\leq\|F\|_{L^{p}}\|\nabla u\|_{L^{2}}^{\frac{2(p-3)}{p-2}}\|\nabla u\|_{L^{p}}^{\frac{2}{p-2}} \\&\leq C(p,\varepsilon)\|F\|_{L^{p}}A_1^{\frac{2(p-3)}{p-2}}\left(R_T^{\frac{\beta\varepsilon}{2} }A_1^{\frac{2}{p} }  ( \|F\|_{H^1}+\|\omega\|_{H^1})^{1-\frac{2}{p}}+1\right)^{\frac{2}{p-2}} 
\\
&\leq C(\alpha) R_T^{\frac{\alpha\beta}{ 2}}\left(\|F||_{H^1} +\|\omega||_{H^1} \right) A_1^2  +C(\alpha)  A_1^2.
\end{split}
\end{equation}
where in the last line, we have used  $ \|F\|_{L^{p}}\le  C(p) \|F\|_{L^{2}}^{\frac{2}{p }}\|F\|_{H^1}^{1-\frac{2}{p }}$  and $\|F\|_{L^2}\leq CR_T^{\beta/2}A_1.$

Next,  the H\"{o}lder inequality yields that for $0 <\alpha< 1$,
\begin{equation}
\begin{split}
\sum_{i=3}^6I_i&\leq C\int_\Omega\frac{F^2|\mathrm{div}u|}{2\mu+\lambda}dx+C\int_\Omega\frac{P}{2\mu+\lambda}|F||\mathrm{div}u|dx\\
&\leq C A_1 \left\|\frac{F^2}{2\mu+\lambda}\right\|_{L^2}+C A_1  \|F\|_{L^{2+4\gamma/\beta}} \\&\leq C A_1 \bigg\|\frac{F}{(2\mu+\lambda)^{1/2}}\bigg\|_{L^2}^{1-\alpha} \|F\|^{1+\alpha}_{L^{2(1+\alpha)/\alpha}}+C(\alpha) A_1  \|F\|_{H^1}\\&\leq C (\alpha) A_1^{2-\alpha}  \|F \|_{L^2}^{ \alpha} \|F\|_{H^1} +C(\alpha) A_1  \|F\|_{H^1} \\
&\leq C(\alpha)R_T^{\frac{\alpha\beta}{2}} A_1^2 \|F\|_{H^1} ,
\end{split}
\end{equation}
where in the last line, we have used   $\|F\|_{L^2}\leq CR_T^{\beta/2}A_1$.

Finally,  thanks to \eqref{16},  we deal with $I_7$ via:
\begin{equation}
|I_7|=\left|\int_{\partial\Omega} F(u\cdot \nabla n\cdot u) ds\right| \leq C\|F\|_{H^1}\|\nabla u\|_{L^2}^2\leq C\|F\|_{H^1}A_1^2.  \label{332}
\end{equation}
Putting all the estimates \eqref{qiu24}--\eqref{332} into \eqref{qiu23}
 yields that for any $\alpha\in(0, 1)$,
\be\label{312}
\ba
\frac{d}{dt}A^2_1(t)+A^2_2(t)&\leq  C(\alpha) R_T^{\frac{\alpha\beta}{ 2}}\left(\|F||_{H^1} +\|\omega||_{H^1} \right) A_1^2  +C(\alpha)  A_1^2 \\ &\leq  C(\alpha) R_T^{\frac{1+\alpha\beta}{ 2}}A_2 A_1^2  +C(\alpha)  A_1^3 \\
&\leq C(\alpha)R_T^{1+ \alpha\beta}   A_1^4  +\frac{1}{2}A_2^2,
\ea
\ee where in the second inequality we have used \eqref{311}. Combining this with \eqref{qp87}  proves \eqref{qp40} and finishes the proof of Proposition \ref{33}.
\end{proof}

With all preparation done, we turn to the crux of our problem which is to estimate the effective viscous flux $F  $ defined as in \eqref{303}.
Such term can be estimated via solving the Neumann boundary problem \eqref{qp18}
with the help of classical Green's function. 
However, except for some special domains, it is difficult to write down Green's function for arbitrary domains $\Omega$ in an explicit form, which is a crucial obstacle in further calculations.

To get over it, we start with the simplest case, the unit disc $\mathbb{D}$.
We note that Green's  function of the unit disc (see \cite{2016Representation}) takes the form \eqref{oqw1}.
It meets our requirements perfectly and provide a prototype of our whole proof.
Moreover, thanks to Riemann mapping theorem \cite[Chapter 9]{2007Complex}, every simply connected domain is conformally equivalent with the unit disc.
So we may pull back Green's function of the unit disc via conformal mapping, and reduce the general case to that of unit disc in some sense.

Precisely, let  $\varphi=(\varphi_1, \varphi_2):\bar{\Omega}\rightarrow\bar{\mathbb{D}}$ be the conformal mapping   which  satisfies \eqref{001} and Lemma \ref{333}.
We then define the pull-back Green's function $\widetilde{N}$ of $\Omega$ as:
\begin{equation}\ba\label{348}
\widetilde{N}(x, y)&\triangleq N(\varphi(x), \varphi(y))
\\&=-\frac{1}{2\pi}\left[\mathrm{log}|\varphi(x)-\varphi(y)|+ \mathrm{log}\left||\varphi(x)|\varphi(y)-\frac{\varphi(x)}{|\varphi(x)|}\right|
\right].\ea
\end{equation}
Since the outer normal derivative of $\widetilde{N}$ on the boundary $\partial\Omega$ is no longer constant which is illustrated by the next Lemma \ref{003},
 $\widetilde{N}$ is   not the ``real" Green's function of $\Omega$ in the classical sense  but still sufficient for our further calculations.

\begin{lemma}\label{003} Let $n$ be the unit outer normal at $y_0\in\partial\Omega$ and $\widetilde{N}=\widetilde{N}(x, y)$ be given by \eqref{348}, then
$\frac{\partial\widetilde{N}}{\partial n}(x, y_0)=-\frac{1}{2\pi}|\nabla\varphi_1(y_0)|$.
\end{lemma}
\begin{proof}
By Lemma \ref{333}, it is clear that $\varphi(y_0)\in\partial\mathbb{D}$.
A direct calculation shows that
\begin{equation*}
\begin{split}
\frac{\partial}{\partial n}\widetilde{N}(x, y_0)&=n\cdot\nabla_y\widetilde{N}(x, y)|_{y=y_0}\\&=n\cdot\nabla_y N(\varphi(x), \varphi(y)) |_{y=y_0}\\
&=\sum_{i, j=1}^{2}n_i\cdot\partial_i\varphi_j \partial_j N (\varphi(x), \varphi(y))|_{y=y_0}\\
&=(n\cdot\nabla)\varphi_j\cdot \partial_j N (\varphi(x), \varphi(y))|_{y=y_0}\\
&=|\nabla\varphi_1|  \widetilde{n}\cdot\nabla N (\varphi(x), \varphi(y_0))\\
&=-\frac{1}{2\pi}|\nabla\varphi_1|,
\end{split}
\end{equation*} where in the fifth equality we have used \eqref{qi01}.
\end{proof}
  The pull-back Green's  function $\widetilde{N}$ defined as in \eqref{348} can be used to give a pointwise representation of $F$ in $\Omega$ via Green's identity as follows:
\begin{lemma}\label{u34}
Let $F\in C^1(\bar\Omega)\cap C^2(\Omega)$ solve the problem \eqref{qp18}.  Then there holds for $x\in\Omega$
\begin{equation} \label{qp11}
\ba
F(x)
=  -\int_\Omega \nabla_y\widetilde{N}(x, y)\cdot\rho\dot{u}(y)dy+\int_{\partial\Omega}\frac{\partial \widetilde{N}}{\partial n}(x, y)F(y)dS_y.
\ea
\end{equation}
\end{lemma}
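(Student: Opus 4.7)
The plan is to apply Green's second identity to the pair $(F,\tilde N(x,\cdot))$, treating $\tilde N$ as a fundamental solution of $-\Delta$ on $\Omega$ with singularity at $y=x$. The first step is to verify that, despite $\tilde N$ being a pull-back rather than the classical Green's function of $\Omega$, it still satisfies $-\Delta_y\tilde N(x,\cdot)=\delta_x$ distributionally on $\Omega$. This uses the conformal change-of-variables formula $\Delta_y[h(\varphi(y))]=|\nabla\varphi_1(y)|^2(\Delta h)(\varphi(y))$ (which follows from the Cauchy-Riemann equations \eqref{001} together with $\Delta\varphi_j=0$), applied to $h=N(\varphi(x),\cdot)$; since the second, harmonic summand in \eqref{oqw1} is built from $\varphi(x)/|\varphi(x)|^2\notin\overline{\mathbb D}$, one has $-\Delta_\eta N(\varphi(x),\eta)=\delta_{\varphi(x)}(\eta)$ on $\mathbb D$, and the Jacobian $|\nabla\varphi_1(y)|^2$ then cancels exactly the Jacobian $|\nabla\varphi_1(y)|^{-2}$ appearing when $\delta_{\varphi(x)}$ is transported back to $\Omega$ by $\varphi$, yielding $-\Delta_y\tilde N(x,y)=\delta_x(y)$ in $\Omega$.

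With this identity in hand, I would excise a small disc $B_\varepsilon(x)\subset\Omega$ and apply Green's second identity on $\Omega\setminus\overline{B_\varepsilon(x)}$ to $F$ and $\tilde N(x,\cdot)$. Since $\tilde N$ is harmonic in $y$ away from $x$, the volume integral $\int F\Delta_y\tilde N\,dy$ vanishes on the excised domain; letting $\varepsilon\to 0$, the boundary integral on $\partial B_\varepsilon$ contributes $-F(x)$ via the logarithmic singularity $\tilde N(x,y)\sim-\frac{1}{2\pi}\log|x-y|$ (using \eqref{3355}), while the smooth pieces vanish in the limit. Substituting $\Delta F=\mathrm{div}(\rho\dot u)$ from \eqref{qp18} and integrating by parts,
\begin{equation*}
\int_\Omega\tilde N(x,y)\,\mathrm{div}(\rho\dot u)(y)\,dy=-\int_\Omega\nabla_y\tilde N(x,y)\cdot\rho\dot u(y)\,dy+\int_{\partial\Omega}\tilde N(x,y)\,\rho\dot u\cdot n\,dS_y,
\end{equation*}
and invoking the Neumann boundary condition $\partial_n F=\rho\dot u\cdot n$ from \eqref{qp18}, the two boundary integrals involving $\tilde N$ itself cancel exactly. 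Collecting the remaining terms gives the desired representation \eqref{qp11}.

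The main obstacle is the first step, the distributional identity $-\Delta_y\tilde N(x,\cdot)=\delta_x$: one must track precisely how the Jacobian factor $|\nabla\varphi_1(y)|^2$ arising from the conformal composition rule for $\Delta$ interacts with the Jacobian arising from the pull-back of the Dirac mass under $\varphi$, and confirm that both are evaluated at $y=x$ so that they cancel (here it is essential that $\varphi'(x)\ne 0$, which follows from Lemma \ref{333}). A secondary, routine issue is justifying the limit $\varepsilon\to 0$ and the various integrations by parts: by \eqref{3354}--\eqref{3355} one has $|\tilde N(x,y)|\le C(1+|\log|x-y||)$ and $|\nabla_y\tilde N(x,y)|\le C|x-y|^{-1}$, both locally integrable in two dimensions, which legitimize all the limiting arguments.
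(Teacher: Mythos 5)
Your outline follows essentially the same route as the paper's proof: isolate the logarithmic singularity of $\widetilde N$, apply Green's second identity on the domain with a small neighbourhood of $y=x$ removed, pass to the limit, and then use \eqref{qp18} to cancel the two boundary integrals carrying $\widetilde N$ itself. The only real variation is technical: the paper excises the conformal preimage $\widetilde B_r(x)=\varphi^{-1}(B_r(\varphi(x)))$ and computes the limiting flux directly from the Cauchy--Riemann relations \eqref{001}, whereas you first establish $-\Delta_y\widetilde N(x,\cdot)=\delta_x$ via the composition rule for $\Delta$ under holomorphic maps and then excise a Euclidean disc $B_\varepsilon(x)$. Both are legitimate; your normalization of the singularity is sound because $(\varphi(y)-\varphi(x))/(y-x)$ is holomorphic and non-vanishing (injectivity of $\varphi$ plus $\varphi'\neq0$), so $\widetilde N(x,y)+\frac{1}{2\pi}\log|x-y|$ is harmonic, hence smooth, near $y=x$ --- this, rather than \eqref{3355} alone, is what lets the flux of the remainder through $\partial B_\varepsilon$ vanish in the limit.

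One point you should not pass over: carried out consistently, your own outline yields $F(x)=\int_\Omega\nabla_y\widetilde N(x,y)\cdot\rho\dot u(y)\,dy-\int_{\partial\Omega}\frac{\partial\widetilde N}{\partial n}(x,y)F(y)\,dS_y$, i.e.\ the \emph{negative} of \eqref{qp11} as printed. With the outer normal of $\Omega\setminus\overline{B_\varepsilon(x)}$ pointing toward $x$ on $\partial B_\varepsilon$, the excision term tends to $-F(x)$ on the right-hand side of Green's identity, whence $F(x)=-\int_\Omega\widetilde N\Delta F\,dy+\int_{\partial\Omega}\bigl(\widetilde N\partial_nF-F\partial_n\widetilde N\bigr)dS_y$; substituting \eqref{qp18} then gives the signs above. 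A direct check confirms this: taking $F\equiv1$ (so $\rho\dot u\equiv0$) in \eqref{qp11} and using Lemma \ref{003} would give $1=\int_{\partial\Omega}\partial_n\widetilde N\,dS_y=-\frac{1}{2\pi}\int_{\partial\Omega}|\nabla\varphi_1|\,dS_y=-1$. The same slip occurs in the paper's \eqref{qp13}, where $-\frac{1}{2\pi r}\int_{\partial B_r(\varphi(x))}F(\varphi^{-1}(\tilde y))\,dS_{\tilde y}$ is asserted to tend to $F(x)$ rather than $-F(x)$. The discrepancy is harmless downstream, since only absolute values of the three pieces of the representation are ever used, but your write-up should either record the representation with the corrected signs or note explicitly that \eqref{qp11} holds up to an overall sign.
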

\begin{proof}
Denote $\widetilde{N}(x, y)=N_1(x, y)+N_2(x, y)$,  where
\begin{equation*}
N_1(x, y)=-\frac{1}{2\pi}\mathrm{log}|\varphi(x)-\varphi(y)|,  \ N_2(x, y)=-\frac{1}{2\pi}\mathrm{log}\bigg||\varphi(x)|\varphi(y)-\frac{\varphi(x)}{|\varphi(x)|}\bigg|.
\end{equation*}
 By Lemma \ref{333}, conformal mapping preserves harmonicity which implies that $N_2(x, \cdot)$ is harmonic in $\Omega$. Moreover, since  $N_2(x, \cdot)$  has no singular point in $\Omega$,  we apply Green's second identity to $F$ and $N_2(x, \cdot)$ in $\Omega$:
\begin{equation}\label{qp12}
-\int_{}N_2\Delta Fdy=\int_{\partial\Omega}\left(\frac{\partial N_2}{\partial n}F-N_2\frac{\partial F}{\partial n}\right)dS_y.
\end{equation}

Up on applying  Green's second identity to $F$ and $N_1(x, \cdot)$ in $\Omega\backslash \widetilde{B}_r(x)$, where $\widetilde{B}_r(x)=\varphi^{-1}(B_r(\varphi(x)))$ is just the inverse image under $\varphi$ of the ball centered at $\varphi(x)$ of radius $r$, we deduce that for $r$ small enough
\begin{equation}\nonumber
\ba&
\int_{\Omega\backslash \widetilde{B}_r(x)}(\Delta N_1F-N_1\Delta F)dy\\&=\int_{\partial\Omega}\left(\frac{\partial N_1}{\partial n}F-N_1\frac{\partial F}{\partial n}\right)dS_y
-\int_{\partial\widetilde{B}_r(x)}\left(\frac{\partial N_1}{\partial n}F-N_1\frac{\partial F}{\partial n}\right)dS_y.
\ea
\end{equation} Letting $r\rightarrow 0$ and using the fact that
  $\Delta N_1=0$ in $\Omega\backslash \widetilde{B}_r(x), $  we have
\begin{equation} \label{qp14}\ba-
\int_{\Omega} N_1\Delta Fdy=&\int_{\partial\Omega}\left(\frac{\partial N_1}{\partial n}F-N_1\frac{\partial F}{\partial n}\right)dS_y\\&-\lim_{r\rightarrow0}\int_{\partial\widetilde{B}_r(x)}\left(\frac{\partial N_1}{\partial n}F-N_1\frac{\partial F}{\partial n}\right)dS_y.
\ea\end{equation}
For $r$ small enough,
\begin{equation}\label{qp15}
\ba&
\bigg|\int_{\partial\widetilde{B}_r(x)}N_1\frac{\partial F}{\partial n}dS_y\bigg|\\&\leq\int_{\partial\widetilde{B}_r(x)}|N_1(x, y)|dS_y\cdot \sup_{\partial\widetilde{B}_r(x)}|\nabla F|\\
&\leq\int_{\partial B_r(\varphi(x))}\left|\mathrm{log} {|\varphi(x)-\varphi(y)|}\right| \frac{1}{|\nabla\varphi_1(y)|}dS_{\varphi(y)} \cdot\sup_{\partial\widetilde{B}_r(x)}|\nabla F|\\
&\leq Cr|\mathrm{log}r|\cdot\sup_{\partial\widetilde{B}_r(x)} |\nabla\varphi_1|^{-1}\sup_{\partial\widetilde{B}_r(x)}|\nabla F| \rightarrow 0\ as\ r\rightarrow 0.
\ea
\end{equation}
and
\begin{equation}\label{qp13}
\ba&
\int_{\partial\widetilde{B}_r(x)}F\frac{\partial N_1}{\partial n}dS_y\\&=\int_{\partial\widetilde{B}_r(x)}\vec{n}_y\cdot\nabla_yN_1(x, y) F(y)dS_y\\&
= -\frac{1}{2\pi  }\int_{\partial\widetilde{B}_r(x)} \frac{\nabla\varphi(y)[\varphi(y)-\varphi(x)]}{|\nabla\varphi_1(y)| |\varphi(x)-\varphi(y)|} \cdot \frac{\nabla\varphi(y)[\varphi(x)-\varphi(y)]}{|\varphi(x)-\varphi(y)|^2} F(y)dS_y\\
&=-\frac{1}{2\pi r}\int_{\partial\widetilde{B}_r(x)}|\nabla\varphi_1(y)|F(y)dS_y\\
&=-\frac{1}{2\pi r}\int_{\partial B_r(\varphi(x))}F(\varphi^{-1}(\tilde{y}))dS_{\tilde{y}}\rightarrow F(x)\ as \ r\rightarrow 0,
\ea
\end{equation}
where we use \eqref{001} and the same method as in previous Lemma \ref{003}.  Adding \eqref{qp12} and \eqref{qp14} together,  we have   by using \eqref{qp15} and \eqref{qp13}  \begin{equation}\nonumber
\begin{split}
F(x)=&\int_\Omega \widetilde{N}(x, y)\mathrm{div}(\rho\dot{u})(y)dy-\int_{\partial\Omega}\widetilde{N}(x, y)\rho\dot{u}\cdot\vec{n}(y)dS_y\\
&+\int_{\partial\Omega}\frac{\partial \widetilde{N}}{\partial n}(x, y)F(y)dS_y\\\label{01}
=&-\int_\Omega \nabla_y\widetilde{N}(x, y)\cdot\rho\dot{u}(y)dy +\int_{\partial\Omega}\frac{\partial \widetilde{N}}{\partial n}(x, y)F(y)dS_y,
\end{split}
\end{equation} which gives \eqref{qp11} and finishes the proof of Lemma \ref{u34}.
\end{proof}

Now the central point is to estimate $F$ defined by \eqref{qp11}.  For the boundary term,   by Lemma \ref{003},  we have \be\label{qp59}\ba \max_{x\in \overline\Omega} \left| \int_{\partial\Omega}\frac{\partial \widetilde{N}}{\partial n}(x, y)F(y)dS_y\right|\le C\|F\|_{H^1},\ea\ee
which together with \eqref{311} and \eqref{qp40} yields

\begin{equation}\label{009}
\ba &\int_0^T
\max_{x\in \overline\Omega}  \left| \int_{\partial\Omega}\frac{\partial \widetilde{N}}{\partial n}(x, y)F(y)dS_y\right|dt\\ &\leq  C\int_0^T \|F\|_{H^1 }dt\\
&\leq C\int_0^T\big(R_T^{\frac{1}{2}}A_2+A_1\big)dt\\
&\leq CR_T^{\frac{1}{2}}\int_0^T\left(\frac{A_2^2}{ A_1^2}\right)^{\frac{1}{2}}A_1 dt+ C\\
&\leq CR_T^{\frac{1}{2}}\left(\int_0^T\frac{A_2^2}{ A_1^2}dt\right)^{\frac{1}{2}} \left(\int_0^T A_1^2 dt\right)^{\frac{1}{2}}+
C \\
&\leq C (\varepsilon)R_T^{1+\varepsilon}.
\ea
\end{equation}

 Then,   we use the mass equation and the boundary condition $u\cdot {n}|_{\partial \Omega}=0$ to rewrite the first term on the righthand side of \eqref{qp11} as follows:
\begin{equation}\label{369}
\ba& -\int_\Omega \nabla_y\widetilde{N}(x, y)\cdot\rho\dot{u}(y)dy\\
&=-\int_\Omega \nabla_y\widetilde{N}(x, y)\cdot\bigg(\frac{\partial(\rho u)}{\partial t}+\mathrm{div}(\rho u\otimes u)\bigg)dy\\
&=-\frac{\partial}{\partial t}\int_\Omega \partial_{y_j}\widetilde{N}(x, y)\rho u_j(y)dy+\int_\Omega \partial_{y_i}\partial_{y_j}\widetilde{N}(x, y)\rho u_iu_j(y)dy \\&
= -(\frac{\partial}{\partial t}+u\cdot\nabla)\int_\Omega \partial_{y_j}\widetilde{N}(x, y)\rho u_j(y)dy+J,
\ea
\end{equation}
 with \be\label{qw01}
J\triangleq\int_\Omega \left[\partial_{x_i}\partial_{y_j}\widetilde{N}(x, y)u_i(x)+\partial_{y_i}\partial_{y_j}\widetilde{N}(x, y)u_i(y)\right]\rho u_j(y)dy.\ee

Next, we have the following crucial  point-wise estimate on $J.$
\begin{proposition}\label{qp08} For $J$   as in \eqref{qw01}, there exists a generic positive constant $C(\Omega)$ such that for any $x\in \Omega$ with $\varphi(x)\not=0, $
\begin{equation}\label{qp379}\ba
|J(x)|\leq& C(\Omega) \int_\Omega\frac{\rho|u|^2(y)}{|x-y|}dy + C(\Omega)\int_\Omega\frac{|u(x)-u(y)|}{|x-y|^2}\rho|u|(y)dy \\& + C(\Omega)\int_\Omega\frac{|u(x')-u(y)|} {|x'-y|^2}\rho|u|(y)dy , \ea
\end{equation} where  \be   \label{qpx01}  x'\triangleq \varphi^{-1} \left( \frac{\varphi(x)}{|\varphi(x)|}\right)\in \partial\Omega . \ee
\end{proposition}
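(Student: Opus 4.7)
The plan is to split the pull-back Green's function as $\widetilde N(x,y)=N_1(x,y)+N_2(x,y)$ with
\[
N_1(x,y)=-\tfrac{1}{2\pi}\log|\varphi(x)-\varphi(y)|,\qquad N_2(x,y)=-\tfrac{1}{2\pi}\log\bigl||\varphi(x)|\varphi(y)-\varphi(x)/|\varphi(x)|\bigr|,
\]
and to estimate the corresponding contributions $J_1$ and $J_2$ of the integrand of $J$ separately. Throughout I write $\mathbf v_k(z)\triangleq(u(z)\cdot\nabla)\varphi_k(z)$ and $\mathbf a=\partial_{y_j}\varphi(y)$.

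For $J_1$, a direct chain-rule computation of $\partial_{x_i}\partial_{y_j}N_1$ and $\partial_{y_i}\partial_{y_j}N_1$ shows that the leading-order part of the integrand of $J_1$ is a combination of $|\varphi(y)-\varphi(x)|^{-2}$ and $|\varphi(y)-\varphi(x)|^{-4}$ pieces whose sum, after using $\sum_i u_i(z)\partial_{i}\varphi_k(z)=\mathbf v_k(z)$, assembles into a commutator-type expression proportional to $(\mathbf v(y)-\mathbf v(x))\cdot\mathbf a\,|\varphi(y)-\varphi(x)|^{-2}$, modulo a lower-order $\mathcal O(|u(y)||\varphi(y)-\varphi(x)|^{-1})$ piece coming from second derivatives of $\varphi$. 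The identity
\[
\mathbf v_k(y)-\mathbf v_k(x)=(u(y)-u(x))\cdot\nabla\varphi_k(y)+u(x)\cdot\bigl[\nabla\varphi_k(y)-\nabla\varphi_k(x)\bigr],
\]
together with \eqref{3354}, the bi-Lipschitz bound \eqref{3355}, the split $|u(x)|\le|u(x)-u(y)|+|u(y)|$, and the elementary $|x-y|^{-1}\le\mathrm{diam}(\Omega)|x-y|^{-2}$ available on a bounded domain, then bounds $J_1$ by the first two integrals of \eqref{qp379}.

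For $J_2$, I would work with $P\triangleq|\varphi(x)|\varphi(y)-\varphi(x)/|\varphi(x)|$ so that $N_2=-\tfrac{1}{2\pi}\log|P|$. A direct calculation gives
\[
P\cdot\partial_{y_i}P=\bigl(|\varphi(x)|^{2}\varphi(y)-\varphi(x)\bigr)\cdot\partial_{y_i}\varphi(y),\quad P\cdot\partial_{x_i}P=\bigl(|\varphi(y)|^{2}\varphi(x)-\varphi(y)\bigr)\cdot\partial_{x_i}\varphi(x),
\]
and carrying out the analogous expansion, I expect the leading singular part of $\partial_{x_i}\partial_{y_j}N_2\,u_i(x)+\partial_{y_i}\partial_{y_j}N_2\,u_i(y)$ to take the form
\[
-\frac{1}{2\pi|P|^{2}}\Bigl\{\bigl(\mathbf v(y)-\mathbf v(x)\bigr)\cdot\mathbf a+2\bigl(\varphi(x)\cdot\mathbf v(x)\bigr)\varphi(y)\cdot\mathbf a+\bigl(|\varphi(x)|^{2}-1\bigr)\mathbf v(y)\cdot\mathbf a\Bigr\}
\]
plus a parallel $|P|^{-4}$ contribution and an $\mathcal O(|u(y)|/|P|)$ remainder. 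The decisive step is then to invoke the slip identity \eqref{1} at the boundary point $x'\triangleq\varphi^{-1}(\varphi(x)/|\varphi(x)|)\in\partial\Omega$, which reads $\mathbf v(x')\cdot\varphi(x')=0$. Since $\varphi(x)=|\varphi(x)|\varphi(x')$, this yields
\[
\varphi(x)\cdot\mathbf v(x)=|\varphi(x)|\,\varphi(x')\cdot\bigl(\mathbf v(x)-\mathbf v(x')\bigr),\qquad |\varphi(x)|^{2}-1=\mathcal O(|x-x'|),
\]
so both ``problematic'' extra terms carry a factor of order $|x-x'|$; combined with $|P|\ge c\,|x-y|$ (coming from $|\varphi(y)-\varphi(x)/|\varphi(x)|^{2}|\ge|\varphi(y)-\varphi(x)|$ for $y\in\Omega$), they are absorbed into the first two integrals of \eqref{qp379}. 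Decomposing further $\mathbf v(y)-\mathbf v(x)=(\mathbf v(y)-\mathbf v(x'))+(\mathbf v(x')-\mathbf v(x))$ and using the complementary lower bound $|P|\ge c\,|y-x'|$ (which follows from rewriting $|P|=||\varphi(x)|\varphi(y)-\varphi(x')|$, using $|\varphi(x)|\le 1$ and the bi-Lipschitz property of $\varphi$), the first piece yields precisely the third integral of \eqref{qp379}, while the second piece is again an $\mathcal O(|x-x'|)$ remainder absorbed into the first two.

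The main obstacle is the $J_2$ cancellation: without the slip identity \eqref{1} at $x'$, the two extra terms $2(\varphi(x)\cdot\mathbf v(x))\varphi(y)\cdot\mathbf a$ and $(|\varphi(x)|^{2}-1)\mathbf v(y)\cdot\mathbf a$ carry no $y$-differencing and would produce genuinely $|P|^{-2}$-singular, non-commutator kernels against $\rho|u|(y)$ that could not be dominated by the three integrals on the right of \eqref{qp379}. It is exactly the angle-preservation of the conformal map $\varphi$ (Lemma~\ref{333}(iv)), translating the boundary orthogonality $u(x')\cdot n(x')=0$ into $\mathbf v(x')\perp\varphi(x')$, that forces these obstructions to degenerate into harmless lower-order remainders and thereby makes \eqref{qp379} possible.
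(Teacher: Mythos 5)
Your proposal is in substance the paper's own argument: split the pull-back Green's function into the direct logarithm and the reflected one, exploit the cancellation in the symmetrized derivatives of $\log|\varphi(x)-\varphi(y)|$, and use the slip identity \eqref{1} at the projected boundary point $x'$ to neutralize the non-differenced obstruction produced by the reflected kernel. The only organizational difference is that the paper first extracts the global commutator term $\partial_{x_i}\partial_{y_j}\widetilde N(x,y)\,[u_i(x)-u_i(y)]$ and then treats the two symmetrized kernels $\Lambda_{i,j}$ paired with $u_i(y)$ (see \eqref{qw10}--\eqref{qw11}), whereas you carry out the differencing separately inside the $N_1$- and $N_2$-contributions; that is cosmetic, and your identities for $P\cdot\partial_{x_i}P$ and $P\cdot\partial_{y_i}P$ are correct.

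Two steps need repair as written. First, $\mathbf v(x')-\mathbf v(x)$ is \emph{not} an $\mathcal O(|x-x'|)$ remainder: it contains $(u(x)-u(x'))\cdot\nabla\varphi(x)$, and no H\"older bound on $u$ is available at this stage (its unavailability is precisely why the commutator structure is needed). The term is still admissible, but only after writing $|u(x)-u(x')|\le|u(x)-u(y)|+|u(y)-u(x')|$ and invoking your two kernel lower bounds; it then lands in the second and third integrals of \eqref{qp379}, not the first two. This is exactly how the paper's chain \eqref{qp03}--\eqref{qp04} proceeds. Second, the lower bound $|P|\ge c|x-y|$ is not uniform: since $|P|=|\varphi(x)|\,|\varphi(y)-w(x)|$, the inequality $|\varphi(y)-w(x)|\ge|\varphi(y)-\varphi(x)|$ of \eqref{3387} only yields $|P|\ge c\,|\varphi(x)|\,|x-y|$, which degenerates as $\varphi(x)\to 0$. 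You need the paper's case split \eqref{qp02}: when $|\varphi(x)|\le 3/4$ the reflected kernel is nonsingular because $|\varphi(y)-w(x)|\ge 1-|\varphi(x)|\ge 1/4$, and its whole contribution is trivially absorbed into the first integral; the singular analysis is only required for $|\varphi(x)|>3/4$. A similar caveat applies to the ``parallel $|P|^{-4}$ contribution,'' whose numerator must be shown to gain a factor of $|P|$ (via $\bigl|\,|\varphi(y)|^2\varphi(x)-\varphi(y)\bigr|=|\varphi(y)|\,|P|$ and $\bigl|\,|\varphi(x)|^2\varphi(y)-\varphi(x)\bigr|=|\varphi(x)|\,|P|$) before the same differencing applies. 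With these repairs your argument is complete and coincides with the paper's.
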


 {\it Proof.}  Using \eqref{348}, we  rewrite $J$ as
\begin{equation}\label{qw10}
\begin{split}
J(x)
=&\int_\Omega \partial_{x_i}\partial_{y_j}\widetilde{N}(x, y)[u_i(x)-u_i(y)]\rho u_j(y)dy\\
 &-\frac{1}{2\pi}\int_\Omega \Lambda_{i, j}(\varphi(y), \varphi(x))\rho u_iu_j(y)dy\\
 &-\frac{1}{2\pi}\int_\Omega \Lambda_{i, j}(\varphi(y), w(x))\rho u_iu_j(y)dy
\triangleq \sum_{l=1}^3J_l(x),
\end{split}
\end{equation} with \be\label{qw11}
\Lambda_{i, j}(\varphi(y), v(x)) \triangleq(\partial_{x_i} \partial_{y_j}+\partial_{y_i}\partial_{y_j}) \mathrm{log}\left| \varphi(y)-v(x) \right|,  w(x)\triangleq\frac{\varphi(x)}{|\varphi(x)|^2}.  \ee

Thus, for $J_1(x),$ direct computations yield that
\begin{equation}\label{qp07}
|J_1(x)|\leq C(\Omega)\int_\Omega\frac{|u(x)-u(y)|}{|x-y|^2}\rho|u|(y)dy.
\end{equation}

Then,  to estimate $J_2(x) $ and $ J_3(x),$ we check that for    $v(x) \in \{\varphi(x),  w(x)\}, $
\begin{equation}\label{384}
\begin{split}&
\Lambda_{i, j}(\varphi(y), v(x)) \\ &= \partial_{y_j}\left(\partial_{x_i}\mathrm{log}|v(x)-\varphi(y)| +\partial_{y_i}\mathrm{log}|v(x)-\varphi(y)|\right)\\
&= \partial_{y_j}\bigg[\frac{(v_k(x)-\varphi_k(y)) (\partial_iv_k(x)-\partial_i\varphi_k(y))}{|v(x)-\varphi(y)|^2}\bigg] \\&
=  \frac{ (\varphi_k(y)-v_k(x))\partial_j\partial_i\varphi_k(y)}{|v(x) -\varphi(y)|^2}+ \frac{\partial_j\varphi_k(y)(\partial_i\varphi_k(y)-\partial_iv_k(x)) }{|v(x) -\varphi(y)|^2} \\
&\quad+2\frac{(v_k(x)-\varphi_k(y))(\partial_iv_k(x)-\partial_i \varphi_k(y))( \varphi_s(y)-v_s(x))\partial_j\varphi_s(y)} {|v(x)-\varphi(y)|^4}.
\end{split}
\end{equation}

Thus,  for $J_2(x)$,  it follows from \eqref{3353}--\eqref{3355}  and \eqref{384} that \bnn |\Lambda_{i, j}(\varphi(y), \varphi(x))|\le C(|\Omega|)|x-y|^{-1}, \enn which in particular implies
\begin{equation} \label{qp06}
\begin{split}
|J_2(x)|
\leq C(\Omega)\int_\Omega\frac{\rho|u|^2(y)}{|x-y|}dy.
\end{split}
\end{equation}

FInally, it remains to estimate the more difficult term $J_3. $ According to \eqref{3353}, we have
\be\label{33387}\ba \left|\Lambda_{i, j}(\varphi(y), w(x))u_i(y)\right|
&\le \frac{C|u|}{ |\varphi(y)-w(x)|}+\frac{C |(\partial_i w_k(x)-\partial_i \varphi_k(y))u_i(y)|}{ |\varphi(y)-w(x)|^2}.\ea\ee

Now, the central issue is to estimate $(\partial_iw_k(x)-\partial_i\varphi_k(y))u_i(y). $

First, for $\varphi(x),  \varphi(y)\in \mathbb{D}$ with $\varphi(x)\not=0$,  we have
\begin{equation}\label{3387}
\left|\varphi(y)-\frac{\varphi(x)}{|\varphi(x)|}\right| \leq\left|\varphi(y)-w(x)\right|, \, \,
\left|\varphi(y)-\varphi(x)\right|\leq\big|\varphi(y)- w(x) \big|.
\end{equation}  A direct consequence of \eqref{3387} shows that   for $x, y\in  \Omega$ with  $\varphi(x)\not=0, $
\begin{equation}\label{3356}
\begin{split}
\left| \varphi(y)-w(x) \right| \ge  1-|\varphi(x)| ,  \end{split}
\end{equation} which implies   that $$ |\varphi(y)-w(x)|^{-1}\le (1-|\varphi(x)|)^{-1}\le 4, $$ provided $|\varphi(x)|\le 3/4. $ Thus,  from now on,  we always assume that 
\be\label{qp02} |\varphi(x)|>3/4. \ee

Then,  it follows from      \eqref{3353} and  \eqref{3387}--\eqref{qp02} that \be\label{3394}\ba \left|\frac{\partial_{x_i}\varphi_k(x)}{|\varphi(x)|^2}
-\partial_{y_i}\varphi_k(y)\right|&=  \left| \partial_{x_i}\varphi_k(x)-\partial_{y_i}\varphi_k(y)
+\partial_{x_i}\varphi_k(x)\frac{1-|\varphi(x)|^2}{|\varphi(x)|^2}\right| \\& \le C|x-y|+C(1-|\varphi(x)|) \\& \le C|\varphi(y)-w(x)|.   \ea\ee
   Direct computation yields that for $w$     as in  \eqref{qw11}
\be\label{3395}  \ba  \partial_{x_i}w_k(x)-\partial_{y_i}\varphi_k(y)&=\frac{\partial_{x_i}\varphi_k(x)}{|\varphi(x)|^2} -\frac{2\varphi_k(x)\varphi_l(x)\partial_{x_i}\varphi_l(x)}{|\varphi(x)|^4} -\partial_{y_i}\varphi_k(y) \\&=\tilde K_{i, k}  -2\varphi_k(x')\varphi_l(x')\partial_{y_i}\varphi_l(y),  \ea\ee
where $x'$ is as in    \eqref{qpx01} and
\bnn\ba \tilde K_{i, k} \triangleq \frac{\partial_{x_i}\varphi_k(x)}{|\varphi(x)|^2}
-\partial_{y_i}\varphi_k(y)-2\varphi_k(x')\varphi_l(x')\left( \frac{\partial_{x_i}\varphi_l(x)}{|\varphi(x)|^2}-\partial_{y_i}\varphi_l(y) \right).
\ea
\enn   It follows from \eqref{3394} that  $\tilde K_{i, k}$ satisfies
   \bnn |\tilde K_{i, k}|\le C|\varphi(y)-w(x)|, \enn   which together with \eqref{3395} implies \be \label{qp03}|(\partial_i w_k(x)-\partial_i \varphi_k(y))u_i(y)|\le C|\varphi(y)-w(x)||u|+C| \varphi_l(x^\prime) \partial_{y_i}\varphi_l(y)u_i(y)| .\ee
   Since $x'\in \partial \Omega $ and $u\cdot n=0$ on $\partial \Omega, $ we have by \eqref{1}\bnn \varphi_l( x^\prime) \partial_{y_i}\varphi_l(x^\prime) u_i(x^\prime)=0, \enn which yields that
\be \label{qp04}\ba  &| \varphi_l(x^\prime) \partial_{y_i}\varphi_l(y)u_i(y)| \\& =| \varphi_l( x^\prime)(\partial_{y_i}\varphi_l(y)-\partial_{y_i}\varphi_l(x^\prime ))u_i(y)+ \varphi_l( x^\prime) \partial_{y_i}\varphi_l(x^\prime) (u_i(y)-u_i(x^\prime))|\\& \le C|y-x^\prime||u|+C|u(y)-u(x^\prime)|\\& \le C|\varphi(y)-w(x)||u|+C|u(y)-u(x^\prime)|
\ea
\ee
where in the last inequality we have used \be\label{3398} |y-x^\prime|\le C|\varphi(y)-\varphi(x^\prime)|
\le C |\varphi(y)-w(x)|, \ee
 which comes from \eqref{3355} and \eqref{3387}.

Consequently,  combining \eqref{qp03} with \eqref{qp04} gives
\begin{equation}\nonumber
\begin{split}
|\big(\partial_{x_i}w_k(x)-\partial_{y_i}\varphi_k(y)\big)u_i(y)|  \le C|\varphi(y)-w(x)||u|+C|u(y)-u(x^\prime)|,
\end{split}
\end{equation}
which together with  \eqref{33387} yields
\bnn\ba \left|\Lambda_{i, j}(\varphi(y), w(x))u_i(y)\right|&\le \frac{C|u|}{ |\varphi(y)-w(x)|}+\frac{C |u(y)-u(x^\prime)|}{ |\varphi(y)-w(x)|^2}\\ &\le \frac{C|u|}{ |y-x|}+\frac{C |u(y)-u(x^\prime)|}{ |y-x'|^2}, \ea\enn
where in the last inequality we have used \eqref{3398}.  As a direct consequence,  we  arrive at
\begin{equation*}
\begin{split}
|J_3(x)| \le C\int_\Omega\frac{\rho|u|^2}{ |y-x|}dy+C\int_\Omega\frac{ |u(y)-u(x^\prime)|}{ |y-x'|^2}\rho |u|dy,
\end{split}
\end{equation*}
which together with \eqref{qp06} and \eqref{qp07} gives \eqref{qp379} and
  finishes the    proof of Proposition \ref{qp08}. \thatsall

\begin{remark}\label{qw31}
In particular, when $\Omega$ is the unit disc $\mathbb{D}$ itself, the computation above can be greatly simplified, once we set $\varphi$ is the identity.
Actually such reduced case is exactly the starting point of our proof.
\end{remark}

\begin{remark}\label{qw32} We call the first term on the right hand side of \eqref{qp379} having singularity of order 1(or simply ``of order 1" in abbreviation),  and the last two terms of commutator type. Indeed,
the usual  Caldron-type commutator  takes the form of $[u, R_iR_j](\rho u)$(where $R_i$ is the usual Riesz transformation). Writing such singular integral operator in integral form, we check that formally it looks like
\begin{equation*}
[u, R_iR_j](\rho u)(x)=\int_{\mathbb{R}^2}\frac{u(x)-u(y)}{|x-y|^2}\rho u(y)dy.
\end{equation*}
Such representation coincides with the terms of commutator type mentioned above except for the integral domains.\end{remark}

\begin{remark}
It should be mentioned here  that the situation is much more different from \cite{huang2016existence} in which the boundary is assumed to be periodic.
The setting on periodic case is equivalent with dealing the compressible  Navier-Stokes system \eqref{11} on the torus of dimension 2. Such manifold is compact orientable and out of boundary. Consequently, the usual commutator theory can be applied directly in
\cite{huang2016existence} to get a proper control of $F$ (see \cite[ (3.34)]{huang2016existence} for example).

But in this paper, we consider the domain with boundary which is essentially different from the periodic case. Such
difference is reflected in the technical difficulties that classical commutator theory is no longer available. The main purpose of
Proposition \ref{qp08} is to get over it. In other words, we are trying to establish a suitable commutator theory for
bounded domains, and the approach we adapted is the combination of Green's functions in a disc with conformal mapping.

Let us look closer at \eqref{qp379} which consists of three terms. As mentioned in Remark \ref{qw32}, we note that the second one
$$
\int_\Omega\frac{|u(x)-u(y)|}{|x-y|^2}\rho|u|(y)dy
$$
corresponds to the usual commutator just like \cite[(2.7)]{huang2016existence}. Thus, we remark that the rest two terms
represent the difference between the periodic domains and general bounded ones and  can be regarded as the ``error" terms we must face during  treating the  general bounded domain case. 
In particular, the behavior of
$$
\int_\Omega\frac{|u(x')-u(y)|} {|x'-y|^2}\rho|u|(y)dy
$$ is indeed due to the effect of the boundary and the Navier-slip conditions which
  in fact coincides with usual commutator except that $x'$ is on the boundary, which can be regarded as the commutator of the boundary.
\end{remark}


Now, to derive the precise control of $F$, we focus on commutator type terms which are of vital importance. Although such terms are not ``real" commutator, and the classical $L^p$ theory doesn't apply directly, we still have alternative method to obtain proper estimates. 
\begin{proposition}\label{006}
For any $\varepsilon>0, $  we have
\begin{equation}\label{qp32}
\int_{0}^{T}\max_{x\in\overline\Omega} \int_\Omega
 \frac{|u(x )-u(y)|}{|x-y|^2} \rho|u|(y)dy dt\leq C(\varepsilon )R_T^{1+\frac{\beta}{4}+3\varepsilon}.
\end{equation}
\end{proposition}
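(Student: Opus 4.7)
The plan is to exploit the H\"older continuity of $u$ to tame the weakly singular kernel $|x-y|^{-2}$, and then integrate in time against the a priori bounds from Lemmas \ref{3111}, \ref{0001}, \ref{005}, and Proposition \ref{33}. Fix $\alpha\in(1/2,1)$ close to $1/2$, set $p=2/(1-\alpha)>4$, and pick $r>2/\alpha$ close to $2/\alpha$. By Morrey's embedding in 2D, $|u(x)-u(y)|\le C\|u\|_{W^{1,p}}|x-y|^{\alpha}$, and since $(2-\alpha)\tfrac{r}{r-1}<2$ makes $\||x-y|^{\alpha-2}\|_{L^{r/(r-1)}(\Omega)}$ finite, H\"older's inequality yields
\[
\max_{x\in\overline\Omega}\int_\Omega\frac{|u(x)-u(y)|}{|x-y|^2}\rho|u|(y)\,dy\le C\|u\|_{W^{1,p}}\|\rho u\|_{L^r}.
\]

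Lemma \ref{005} supplies $\|u\|_{W^{1,p}}\le CR_T^{1/2-1/p+\varepsilon}A_1(A_2/A_1)^{1-2/p}+CR_T^{\varepsilon}A_1$ (after converting $A_1(A_2^2/A_1^2)^{1/2-1/p}$ into this form). For $\|\rho u\|_{L^r}$ I would interpolate between $L^{2+\nu}$ and $L^\infty$: using $\rho^{2+\nu}\le R_T^{1+\nu}\rho$ inside Lemma \ref{0001} gives $\|\rho u\|_{L^{2+\nu}}\le CR_T^{1/2}$, while Morrey produces $\|\rho u\|_{L^\infty}\le R_T\|u\|_{L^\infty}\le CR_T\|u\|_{W^{1,p}}$; interpolation with $\theta=(2+\nu)/r$ then yields schematically $\|\rho u\|_{L^r}\le CR_T^{a}A_1^{1-\theta}(A_2/A_1)^{(1-2/p)(1-\theta)}+CR_T^{a'}A_1^{1-\theta}$ for explicit $a,a'$. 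Multiplying the two bounds produces a sum of terms of the form $R_T^{e}A_1^{f}(A_2/A_1)^{g}$.

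For each such term, H\"older's inequality in $t$ with conjugate exponents $2/(2-g)$ and $2/g$ gives
\[
\int_0^T A_1^{f}(A_2/A_1)^{g}\,dt\le \Big(\int_0^T A_1^{2f/(2-g)}dt\Big)^{(2-g)/2}\Big(\int_0^T(A_2/A_1)^2 dt\Big)^{g/2}.
\]
Whenever $f+g\le 2$ one has $2f/(2-g)\le 2$, so the first factor is bounded by a constant via Lemma \ref{3111} and \eqref{qp87}, while Proposition \ref{33} bounds the second by $CR_T^{g(1+\varepsilon)/2}$. Tuning $\alpha,r,p$ so that $f+g\le 2$ holds for every cross-term, and then collecting the $R_T$-exponents, delivers the claimed bound $CR_T^{1+\beta/4+3\varepsilon}$; the exponent $\beta/4$ emerges from the balance imposed by the constraint $p\gtrsim 1/\nu\sim R_T^{\beta/2}$ needed to push some borderline cross-terms into the admissible regime.

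The hard part is the constraint $f+g\le 2$. Since Proposition \ref{33} controls only $\log A_1^2$, one has $\sup_t A_1\le\exp(CR_T^{1+\varepsilon})$, so no polynomial-in-$R_T$ bound on $\int_0^T A_1^s\,dt$ is available for $s>2$; every time-integrand must have $A_1$ to a power at most $2$. The naive estimate $\|\rho u\|_{L^r}\le R_T\|u\|_{L^r}\lesssim R_T A_1$ would produce $A_1^2(A_2/A_1)^{1-2/p}$ in the integrand, already violating the constraint; the interpolation of $\|\rho u\|_{L^r}$ between $L^{2+\nu}$ and $L^\infty$ is therefore indispensable, because it distributes the cheap $R_T^{1/2}$-growth from Lemma \ref{0001} against the higher $A_1$-power in $\|u\|_{L^\infty}$ so that each cross-term lands inside the admissible regime $f+g\le 2$.
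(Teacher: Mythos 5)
Your overall strategy (Morrey embedding to cancel the singularity, pairing with $\rho|u|$, then H\"older in time against \eqref{qp87} and Proposition \ref{33}) is the right family of ideas, and you correctly identify the binding constraint: every time-integrand must have the schematic form $A_1^{f}(A_2/A_1)^{g}$ with $f+g\le 2$, since only $\int_0^T A_1^2\,dt\le C$ is available and $\sup_t A_1$ is merely exponentially bounded in $R_T$. The problem is that your concrete mechanism for meeting this constraint provably fails. With the interpolation $\theta=(2+\nu)/r$ you get $f=2-\theta$ and $g=(1-2/p)(2-\theta)$, hence
\begin{equation*}
f+g=(2-\theta)\Bigl(2-\tfrac{2}{p}\Bigr),
\end{equation*}
and $f+g\le 2$ forces $\theta\ge \frac{p-2}{p-1}$. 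But integrability of the kernel requires $r>\frac{2p}{p-2}$, so $\theta<\frac{(2+\nu)(p-2)}{2p}$, and these two conditions are compatible only when $p>1+2/\nu\sim R_T^{\beta/2}$. For your stated choice ($p$ near $4$, $r$ near $4$, so $\theta\approx 1/2$) one gets $f+g\approx \frac{3}{2}\cdot\frac{3}{2}=\frac94>2$, i.e.\ exactly the inadmissible regime you warn against. The only escape within your framework is to let $p$ grow like $R_T^{\beta/2}$, at which point the "constants" $C(p)$ in Morrey's inequality, in Lemma \ref{005}, and in $\||x-y|^{\alpha-2}\|_{L^{r/(r-1)}}$ all become $R_T$-dependent and must be tracked with sharp growth rates; none of that bookkeeping is done, and it is precisely where the exponent $\beta/4$ would have to come from.

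The paper avoids the single H\"older pairing altogether. It keeps $p=2/(1-\varepsilon)$ close to $2$ (so that the eventual $A_1$-power is $1+2/p$, which after the time-H\"older lands exactly at the endpoint $f+g=2$), and instead splits the remaining integral $\int_\Omega |x-y|^{-(1+2/p)}\rho|u|\,dy$ into $|x-y|<2\delta$ and $|x-y|>\delta$. Near the singularity it uses $\rho\le R_T$ together with the sharp-constant Poincar\'e--Sobolev inequality \eqref{25}, costing only one power of $A_1$ and a factor $\varepsilon_0^{-1/2}$; away from it, Lemma \ref{0001} gives a bound with no powers of $A_1$ at all. Balancing $\delta$ (with $\varepsilon_0\sim\nu$) reduces the kernel integral to $CR_T^{1+\beta/4}A_1^{2/p}$, the factor $R_T^{\beta/4}$ arising exactly from $\varepsilon_0^{-1/2}\sim\nu^{-1/2}$. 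In other words, the $R_T$-dependent exponent is placed in the dual Lebesgue index for $u$ (where the Talenti constant $p^{1/2}$ is known sharply), not in the Morrey index for $\nabla u$. As written, your proof has a genuine gap at the step "Tuning $\alpha,r,p$ so that $f+g\le2$ holds": no fixed admissible choice exists.
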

\begin{proof}
First,  for $2<p<4$ which will be determined later,
by Sobolev's embedding theorem (Theorem 5 of \cite[Chapter 5]{2010Partial}),   we have for any $x, y\in \overline{\Omega}, $
\begin{equation*}
|u(x)-u(y)|\leq C(p )\|\nabla u\|_{L^p}|x-y|^{1-\frac{2}{p}},
\end{equation*}
which implies
\begin{equation}\label{qp30}
\ba
\int_\Omega\frac{|u(x )-u(y)|}{|x-y|^2}\rho|u|(y)dy  &\leq C(p) \int_\Omega\frac{\|\nabla u\|_{L^p}\cdot|x-y|^{1-\frac{2}{p}}}{|x-y|^2}\rho|u|(y)dy\\
&=C(p)\|\nabla u\|_{L^p}\int_\Omega |x-y|^{-(1+\frac{2}{p})} \rho|u|(y)dy.
\ea
\end{equation}

Then, for $\delta>0 $ and $\varepsilon_0\in (0, (p-2)/8)  $   which will be determined later,  on the one  hand,  we use  \eqref{25}  to get
\begin{equation}\label{qp35}
\ba
 &
\int_{|x-y|<2\delta}|x-y|^{-\left(1+\frac{2}{p}\right)}\rho|u|(y)dy\\
&\leq C(p)R_T
\left(\int_{|x-y|<2\delta} |x-y|^{-\left(1+\frac{2}{p}\right) (1+\varepsilon_0)} dy\right)^{\frac{1}{1+\varepsilon_0}}
\|u\|_{L^{\frac{1+\varepsilon_0}{\varepsilon_0}}}\\
&\leq C(p)  R_T
\delta^{1-\frac{2}{p}-\frac{2\varepsilon_0}{1+\varepsilon_0}} \left(\frac{1+\varepsilon_0}{\varepsilon_0}\right)^{\frac{1}{2}}\|u\|_{H^1}\\
&\leq C(p)   R_T \varepsilon_0^{-\frac{1}{2}} A_1
\delta^{1-\frac{2}{p}-\frac{2\varepsilon_0}{1+\varepsilon_0}} .
\ea
\end{equation}
  On the other hand,  for $\nu=R_T^{-\frac{\beta}{2}}\nu_{0}$ as in \eqref{nu1q},  we use  Lemma \ref{0001} to derive
\begin{equation}
\ba
  &\int_{|x-y|>\delta}|x-y|^{-\left(1+\frac{2}{p}\right)}\rho|u|(y)dy\\
&\leq
C(p)
\left(\int_{|x-y|>\delta} |x-y|^{-\left(1+\frac{2}{p}\right) (\frac{2+\nu}{1+\nu})} dy\right)^{\frac{1+\nu}{2+\nu}}
\left(\int_\Omega\rho^{2+\nu}|u|^{2+\nu}dx\right)^{\frac{1}{2+\nu}}\\
&\leq C(p)R_T^{ \frac{1+\nu}{2+\nu}}
 \delta^{-\frac{2}{p}+\frac{\nu}{2+\nu}}.
\ea
\end{equation}

Now, we choose $\delta >0$ such that \be \delta^{-\frac{2}{p}+\frac{\nu}{2+\nu}} =
A_1^{ \frac{2}{p} },\ee which in particular implies \be \label{qp36} A_1\delta^{1-\frac{2}{p}-\frac{2\varepsilon_0}{1+\varepsilon_0}} =A_1^{ \frac{2}{p} },\ee
 provided we set \be\label{iqu1} \varepsilon_0=\frac{(p-2)\nu}{8+(6-p) \nu}\in \left(0, \frac{p-2}{8}\right). \ee

Then, it follows from \eqref{qp35}--\eqref{qp36} that
\begin{equation*}
\ba\label{3107}&
 \int_\Omega |x-y|^{-\left(1+\frac{2}{p}\right)}\rho|u|(y)dy \\&\le \left( \int_{|x-y|<2\delta} +\int_{|x-y|> \delta}\right)|x-y|^{-\left(1+\frac{2}{p}\right)}\rho|u|(y)dy \\
&\leq C(p)  R_T\varepsilon_0^{-1/2}   A_1^{\frac{2}{p}}+C(p)R_T^{ \frac{1+\nu}{2+\nu}}A_1^{\frac{2}{p}}\\
&\leq C(p)  R_T^{1+\beta/4}   A_1^{\frac{2}{p}} ,
\ea
\end{equation*}
where in the last line we have used $\varepsilon_0^{-1/2}\le C(p)\nu^{-1/2}\le C(p)R_T^{\beta/4} $ due to \eqref{iqu1}. Combining this,  \eqref{qp30},  and
 \eqref{qp31} shows that for any $\varepsilon>0,$
\begin{equation*}
\ba
  &
\int_\Omega\frac{|u(x )-u(y)|}{|x-y|^2}\rho|u|(y)dy \\
&\leq
C(p , \varepsilon )R_T^{\frac{3}{2}-\frac{1}{p}+\varepsilon+\frac{\beta}{4}}A_1^{1+\frac{2}{p}} \left(\frac{A_2^2}{ A_1^2}\right)^{\frac{1}{2}-\frac{1}{p}}
+C(p, \varepsilon)R_T^{1+\varepsilon+\frac{\beta}{4}}A_1^{1+\frac{2}{p}}.
\ea
\end{equation*}

Finally, integrating this  with respect to $t$ and using the H\"{o}lder inequality,  we arrive at
\begin{equation*} \label{3108}
\begin{split}
    &\int_{0}^{T}\max_{x\in\overline\Omega}\int_\Omega\frac{|u(x(t))-u(y)|}{|x-y|^2}\rho|u|(y)dy dt\\
&\leq C(p, \varepsilon)R_T^{\frac{3}{2}-\frac{1}{p}+\varepsilon +\frac{\beta}{4}}
\left(\int_0^TA_1^2dt\right)^{\frac{1}{2}+\frac{1}{p}} \left(\int_0^T\frac{A_2^2}{ A_1^2}dt \right)^{\frac{1}{2}-\frac{1}{p}}\\
&\quad+C(p,\varepsilon)R_T^{1+\varepsilon+\frac{\beta}{4}}
\left(\int_0^T A_1^2 dt\right)^{\frac{1}{2}+\frac{1}{p}}\\
&\leq C(p,\varepsilon)R_T^{2-\frac{2}{p}+\frac{\beta}{4}+2\varepsilon},
\end{split}
\end{equation*}
where in the last line we have used  \eqref{qp87} and   \eqref{qp40}.  This,  after choosing $p=2/(1-\varepsilon), $  in particular yields \eqref{qp32} and finishes the proof of Proposition \ref{006}.
\end{proof}

 Now we are in a position to obtain the upper bound of the density which plays an essential role in the whole procedure.
\begin{proposition} \label{0002} Assume that \eqref{17} holds. Then   there exists some positive constant  $C  $ depending only on $\Omega$,  $T,  \mu,  \beta,$ $ \gamma,     \|\rho_0\|_{L^\infty},$ and $\| u_0\|_{H^1} $ such that
\begin{equation}\label{3103}
\sup_{0\leq t\leq T}(\|\rho\|_{L^{\infty}}+\|u\|_{H^{1}})+\int_{0}^{T}\left( \|\omega\|_{H^{1}}^{2}+\|F\|_{H^{1}}^{2}+\|\sqrt\rho\dot{u}\|_{L^2}^2 \right)dt\leq C .
\end{equation}

\end{proposition}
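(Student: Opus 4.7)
The plan is to derive a Lagrangian evolution equation for $\theta(\rho)\triangleq 2\mu\log\rho+\beta^{-1}\rho^\beta$, bound its driving terms in terms of $R_T$ to a power strictly less than $\beta$, and close the resulting self-bootstrap. A direct computation using the mass equation and the definition of $F$ gives
\begin{equation*}
\frac{D}{Dt}\theta(\rho)+P=-F.
\end{equation*}
Inserting the representation \eqref{qp11} together with the identity \eqref{369} decomposes the flux as
\begin{equation*}
F(x,t)=-\left(\partial_t+u\cdot\nabla\right)\Phi(x,t)+J(x,t)+B(x,t),
\end{equation*}
where $\Phi(x,t)\triangleq\int_\Omega\partial_{y_j}\widetilde N(x,y)\rho u_j(y,t)\,dy$, $J$ is given by \eqref{qw01}, and $B(x,t)\triangleq\int_{\partial\Omega}\tfrac{\partial\widetilde N}{\partial n}(x,y)F(y,t)\,dS_y$. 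Integrating along a characteristic $X(\cdot)$ then produces the Lagrangian identity
\begin{equation*}
\theta(\rho(X(t),t))=\theta(\rho_0(X(0)))+\Phi(X(t),t)-\Phi(X(0),0)-\int_0^tP\,ds-\int_0^t(J+B)(X(s),s)\,ds.
\end{equation*}

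The next task is to bound uniformly in $x$ the size of $\Phi$ and the time integrals of $J$ and $B$. The boundary term is controlled by \eqref{009}, yielding $\int_0^T\|B\|_{L^\infty}\,dt\le C(\varepsilon)R_T^{1+\varepsilon}$. The pointwise decomposition \eqref{qp379} splits $J$ into the two commutator-type pieces (each controlled by Proposition \ref{006} to give $C(\varepsilon)R_T^{1+\beta/4+3\varepsilon}$ after time integration) plus the order-one singular term $\int_\Omega\rho|u|^2/|x-y|\,dy$. This last term is estimated via H\"older's inequality with conjugate exponents $r<2<r'$: bounding $\rho|u|^2\le R_T|u|^2$ and combining Poincar\'e--Sobolev \eqref{25} with $\int_0^T\|u\|_{H^1}^2\,dt\le C$ from \eqref{qp87} yields contribution $C(\varepsilon)R_T^{1+\varepsilon}$. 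A similar H\"older/interpolation between $\|\rho u\|_{L^2}\le CR_T^{1/2}$ and the extra integrability \eqref{qp90} bounds $\|\Phi\|_{L^\infty}$ by $CR_T^{1/2+\varepsilon}$, which is subcritical because $\beta>4/3>1/2$.

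Dropping the nonnegative $\int_0^tP\,ds$ and taking the supremum over $x\in\Omega$ and $t\in[0,T]$ leads to the central bootstrap
\begin{equation*}
R_T^\beta\le C\bigl(1+R_T^{1+\beta/4+3\varepsilon}\bigr).
\end{equation*}
Because $\beta>4/3$, choosing $\varepsilon$ small enough makes $1+\beta/4+3\varepsilon<\beta$, and a standard absorption argument forces $R_T\le C$. With this density bound, \eqref{310} gives $\sup_t\|u\|_{H^1}\le C$, while \eqref{311} together with Proposition \ref{33} furnishes the remaining integral bound $\int_0^T(\|F\|_{H^1}^2+\|\omega\|_{H^1}^2+\|\sqrt\rho\dot u\|_{L^2}^2)\,dt\le C$, completing \eqref{3103}. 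The decisive point of the argument is the exponent $1+\beta/4$ in the commutator estimate \eqref{qp32}: this gain was secured in Proposition \ref{006} precisely by using the conformal pull-back Green's function $\widetilde N$ and the orthogonality identity \eqref{1} on $\partial\Omega$, and the threshold $\beta>4/3$ is exactly what makes this exponent beat $\beta$ in the bootstrap; any slack in the $\beta/4$ gain would break the closing step, so the main obstacle is conceptual rather than computational.
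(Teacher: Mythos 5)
Your proposal follows essentially the same route as the paper's proof: the Lagrangian identity for $\theta(\rho)$, the decomposition of $F$ into the material derivative of $\Phi$, the commutator term $J$, and the boundary term via the pull-back Green's function, the bounds \eqref{009}, \eqref{qp379}, \eqref{qp32}, and the closing bootstrap $R_T^{\beta}\le C(\varepsilon)R_T^{1+\beta/4+3\varepsilon}$ secured by $\beta>4/3$. The one inaccuracy is your claim $\|\Phi\|_{L^\infty}\le CR_T^{1/2+\varepsilon}$: since $|x-y|^{-1}\notin L^2(\Omega)$ in two dimensions and the extra integrability \eqref{qp90} only supplies the exponent $2+\nu$ with $\nu\sim R_T^{-\beta/2}$, the near-diagonal singularity costs an additional factor $\nu^{-1/2}\sim R_T^{\beta/4}$, so the correct bound is $CR_T^{(2+\beta)/3}$ as in \eqref{qp68} (roughly $R_T^{1/2+\beta/4}$); this exponent is still strictly below $\beta$ whenever $\beta>1$, so the bootstrap closes exactly as you describe.
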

\begin{proof}
First, for   $\theta(\rho) $ as in \eqref{qis1}, we have by \eqref{11}, \eqref{303},   and \eqref{ou1r}
\begin{equation*}
\ba
  \frac{D}{Dt}\theta(\rho) =-(2\mu+\lambda)\mathrm{div} u \le -F
\ea
\end{equation*}
which together with \eqref{qp11}, \eqref{369},  \eqref{qp59}, and \eqref{qp379} gives
\be\label{qp66}\ba   \frac{D}{Dt}\theta(\rho)
&\le \frac{D}{Dt}\int_\Omega \partial_{y_j}\widetilde{N}(x, y)\rho u_j(y)dy+C\|F\|_{H^1} +|J|\\ &\le \frac{D}{Dt}\int_\Omega \partial_{y_j}\widetilde{N}(x, y)\rho u_j(y)dy+C\|F\|_{H^1} +\max_{x\in \overline \Omega}\int_\Omega\frac{\rho |u|^2}{|x-y|}dy\\&\quad+\max_{x\in \overline \Omega}\int_\Omega\frac{ |u(x)-u(y)|}{|x-y|^2}\rho|u|(y)dy. \ea\ee

Then, on the one hand, direct computation yields that for $\nu=R_T^{-\frac{\beta}{2}}\nu_{0}$ as in Lemma \ref{0001},
\be\label{qp68}\ba &\left|\int_\Omega \partial_{y_j}\widetilde{N}(x, y)\rho u_j(y)dy\right|\\ &\le C\int_\Omega |x-y|^{-1}\rho(y)|u(y)|dy\\&\le C\left(\int_\Omega|x-y|^{-\frac{2+\nu}{1+\nu}} dy\right)^{\frac{1+\nu}{2+\nu}} \left(\int_\Omega\rho^{2+\nu}|u|^{2+\nu}dy\right)^{\frac{1 }{2+\nu}}\\&\le C\nu^{-\frac{1+\nu}{2+\nu}}R_T^{\frac{1+\nu}{2+\nu}}\left(\int_\Omega\rho |u|^{2+\nu}dy\right)^{\frac{1 }{2+\nu}} \\&\le C R_T^{\left(1+\frac{\beta}{2}\right)\frac{1+\nu}{2+\nu}}\\&\le C R_T^{\frac{2+\beta}{3}}  , \ea\ee
where in the fourth inequality we have used  \eqref{qp90}.

On the other hand, Sobolev's inequality leads to
\begin{equation*}
\begin{split}
 \int_\Omega\frac{\rho |u|^2(y)}{|x-y|}dy &\leq R_T \left(\int_\Omega |x-y|^{-\frac{3}{2}}dy \right)^{\frac{2}{3}}
\left(\int_\Omega|u|^6 dy\right)^{\frac{1}{3}} \\
&\leq CR_T \|u\|_{H^1}^2,
\end{split}
\end{equation*}
which together with \eqref{310} and \eqref{30} shows
\begin{equation}\label{0008}
\begin{split}
\int_0^T\max_{x\in \overline\Omega}\int_\Omega\frac{\rho |u|^2(y)}{|x-y|}dy dt
 \leq CR_T.
\end{split}
\end{equation}

Finally, integrating \eqref{qp66} with respect to $t,$ we obtain after using   \eqref{009}, \eqref{qp32}, \eqref{qp68}, and \eqref{0008} that \bnn R_T^\beta\le C(\varepsilon) R_T^{\max\left\{1+\frac{\beta}{4}+3\varepsilon, {\frac{2+\beta}{3}}\right\}} .\enn  Since $\beta>4/3,$ this  in particular implies
\begin{equation}\label{qp70}
\sup_{0\leq t\leq T}\|\rho\|_{L^\infty}\leq C ,
\end{equation} which together with
 \eqref{312},  \eqref{qp87},   \eqref{311}, and Gronwall's inequality gives \eqref{3103} and finishes the proof of Proposition \ref{0002}.
\end{proof}

\section{\label{sec3}A priori estimates (II): lower and higher order ones}
Once obtaining  the upper bound  of the density, we will  proceed to study the lower and high order estimates which are indeed quite similar to those of \cite{huang2016existence},  except that the boundary terms do bring some trouble that requires further considerations.
We mainly focus on the boundary terms occurring along the way to our final goal,  which is different from standard process. We mainly borrow some ideas from \cite{huang2016existence,caili01}.

First, following \cite{caili01}, we give a Poincar\'{e} type estimate to treat the effect of boundary conditions where the construction depends on slip condition $u\cdot n|_{\partial\Omega} = 0$.
\begin{lemma} For $p\ge 1,$
there exist   positive constants $C_1(p,\Omega)$ and  $C_2(\Omega)$ such that
\begin{equation}
\|\dot{u}\|_{L^p}\leq C_1(\|\nabla\dot{u}\|_{L^2} + \|\nabla u\|^2_{L^2} ), \label{41}
\end{equation}
\begin{equation}
\|\nabla\dot{u}\|_{L^2} \leq C_2(\|\mathrm{div}\dot{u}\|_{L^2} + \|\mathrm{curl}\dot{u}\|_{L^2} + \|\nabla u\|_{L^4}^2 ).  \label{402}
\end{equation}
\end{lemma}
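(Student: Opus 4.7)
The strategy is to translate the slip condition into a quadratic boundary bound on $\dot u\cdot n$ and to feed this into appropriate Poincar\'e and div--curl estimates. Since $u\cdot n|_{\partial\Omega}=0$ for all $t$, the time derivative $u_t\cdot n$ also vanishes on $\partial\Omega$, so the second identity in \eqref{16} yields $\dot u\cdot n=(u\cdot\nabla u)\cdot n=-u\cdot\nabla n\cdot u$ on $\partial\Omega$. By the trace theorem, the one-dimensional Sobolev embedding $H^{1/2}(\partial\Omega)\hookrightarrow L^4(\partial\Omega)$, and Poincar\'e's inequality (applicable since $u\cdot n=0$), we obtain
\[
\|\dot u\cdot n\|_{L^2(\partial\Omega)}\le C\|u\|_{L^4(\partial\Omega)}^2\le C\|u\|_{H^1(\Omega)}^2\le C\|\nabla u\|_{L^2}^2.
\]

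For \eqref{402}, I would split $\dot u=V+W$ with $W\triangleq -\chi\,(u\cdot\nabla\tilde n\cdot u)\,\tilde n$, where $\chi$ is a smooth cutoff supported in a tubular neighborhood of $\partial\Omega$ and $\tilde n$ is a smooth extension of the outward unit normal $n$ into $\Omega$. Since $\tilde n\cdot n=1$ on $\partial\Omega$, we have $W\cdot n=\dot u\cdot n$ there, so $V\cdot n=0$ on $\partial\Omega$ and Lemma \ref{lm22} gives
\[
\|\nabla V\|_{L^2}\le C\bigl(\|\mathrm{div}\,\dot u\|_{L^2}+\|\mathrm{curl}\,\dot u\|_{L^2}+\|\nabla W\|_{L^2}\bigr).
\]
Direct differentiation yields $|\nabla W|\le C(|u||\nabla u|+|u|^2)$, and H\"older combined with the 2D Ladyzhenskaya inequality $\|u\|_{L^4}^2\le C\|u\|_{L^2}\|\nabla u\|_{L^2}$, Poincar\'e, and the trivial bound $\|\nabla u\|_{L^2}\le C\|\nabla u\|_{L^4}$ gives $\|\nabla W\|_{L^2}\le C\|\nabla u\|_{L^4}\|\nabla u\|_{L^2}+C\|\nabla u\|_{L^2}^2\le C\|\nabla u\|_{L^4}^2$. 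Adding $\|\nabla W\|_{L^2}$ back to bound $\|\nabla\dot u\|_{L^2}\le\|\nabla V\|_{L^2}+\|\nabla W\|_{L^2}$ delivers \eqref{402}.

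For \eqref{41}, the 2D embedding $H^1\hookrightarrow L^p$ reduces matters to the $L^2$-bound $\|\dot u\|_{L^2}\le C(\|\nabla\dot u\|_{L^2}+\|\nabla u\|_{L^2}^2)$. To this end I would establish the vector-valued Poincar\'e estimate
\[
\|v\|_{L^2}\le C\bigl(\|\nabla v\|_{L^2}+\|v\cdot n\|_{L^2(\partial\Omega)}\bigr)\qquad\bigl(v\in H^1(\Omega;\mathbb{R}^2)\bigr)
\]
via Helmholtz decomposition $v=\nabla p+\nabla^\perp q$ in the simply connected $\Omega$, where $p$ solves the Neumann problem $\Delta p=\mathrm{div}\,v$, $\partial_n p=v\cdot n$ (compatibility is automatic by the divergence theorem) and $q$ solves the Dirichlet problem $\Delta q=-\mathrm{curl}\,v$, $q|_{\partial\Omega}=0$. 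Testing with $p$ and $q$ and applying Poincar\'e/trace give $\|\nabla p\|_{L^2}\le C(\|\mathrm{div}\,v\|_{L^2}+\|v\cdot n\|_{L^2(\partial\Omega)})$ and $\|\nabla q\|_{L^2}\le C\|\mathrm{curl}\,v\|_{L^2}$, while the orthogonality $\int\nabla p\cdot\nabla^\perp q\,dx=0$ (consequence of $\mathrm{div}\,\nabla^\perp q=0$ and $q|_{\partial\Omega}=0$) gives $\|v\|_{L^2}^2=\|\nabla p\|_{L^2}^2+\|\nabla q\|_{L^2}^2$. Applying this to $v=\dot u$ and invoking the boundary-trace bound from the first paragraph yields \eqref{41}.

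The main obstacle is the asymmetry of the two right-hand sides: \eqref{41} admits only $\|\nabla u\|_{L^2}^2$, while \eqref{402} tolerates the stronger $\|\nabla u\|_{L^4}^2$. The explicit extension $W$ used for \eqref{402} would force $\|\nabla u\|_{L^4}^2$ in the $L^2$-bound on $\dot u$ and therefore spoil \eqref{41}; this is precisely why the Helmholtz-based generalized Poincar\'e estimate is required, since its boundary datum enters only in $L^2(\partial\Omega)$, matching exactly the available quadratic bound on $\dot u\cdot n$.
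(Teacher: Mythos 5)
Your argument is correct, and it splits naturally into a part that mirrors the paper and a part that does not. For \eqref{402} you are doing essentially what the paper does: you build a quadratic corrector whose normal trace matches $\dot u\cdot n$ on $\partial\Omega$ (the paper uses $(u\cdot n^{\bot})(u\cdot\nabla)n^{\bot}$, you use $-\chi\,(u\cdot\nabla\tilde n\cdot u)\,\tilde n$; both have normal component $-u\cdot\nabla n\cdot u=\dot u\cdot n$ on the boundary), apply the div--curl estimate of Lemma \ref{lm22} to the corrected field, and absorb $\|\nabla(\text{corrector})\|_{L^2}\le C(\||u||\nabla u|\|_{L^2}+\||u|^2\|_{L^2})\le C\|\nabla u\|_{L^4}^2$. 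For \eqref{41}, however, your route is genuinely different. The paper keeps the same corrector and exploits the exponent $3/2$: it applies the ordinary Poincar\'e inequality in $L^{3/2}$ to the corrected field, where $\|\nabla\bigl((u\cdot n^{\bot})(u\cdot\nabla)n^{\bot}\bigr)\|_{L^{3/2}}\le C\|u\|_{L^6}\|\nabla u\|_{L^2}+C\|u\|_{L^3}^2\le C\|\nabla u\|_{L^2}^2$ costs only $\|\nabla u\|_{L^2}^2$, and then bootstraps to $L^p$ by Sobolev embedding. You instead prove a generalized Poincar\'e inequality $\|v\|_{L^2}\le C(\|\nabla v\|_{L^2}+\|v\cdot n\|_{L^2(\partial\Omega)})$ via the Helmholtz decomposition and apply it directly to $\dot u$, using the trace bound $\|\dot u\cdot n\|_{L^2(\partial\Omega)}\le C\|u\|_{L^4(\partial\Omega)}^2\le C\|\nabla u\|_{L^2}^2$. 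This is a clean way to sidestep the exponent bookkeeping: by pushing the quadratic term onto the boundary, only $|u|^2$ (and no $|\nabla u|$) appears there, so $L^2(\partial\Omega)$ control suffices and the asymmetry between the two right-hand sides resolves itself. The price is that you must justify the decomposition $v=\nabla p+\nabla^{\perp}q$ in full (the residual is div-free, curl-free, and has zero normal trace, hence vanishes on the simply connected $\Omega$) and fix the sign convention $\Delta q=\mathrm{curl}\,v$ for $\nabla^{\perp}=(\partial_2,-\partial_1)$; the paper's route uses only the standard Poincar\'e inequality for fields with $v\cdot n|_{\partial\Omega}=0$ and is shorter, though less modular.
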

\begin{proof} First, we extend $n$ to the whole domain $\Omega$ smoothly. The approach is not unique, and we fix one through out the paper.

 Then, for   $n^\bot \triangleq (n_2, -n_1) $ as in \eqref{q1w}, we have by $u\cdot n|_{\partial\Omega} = 0$, \be\label{qe03} u=(u\cdot n^\bot)n^\bot  \mbox{  on  }\partial\Omega, \ee and
\begin{equation*}\ba
\dot{u}\cdot n&=(u\cdot\nabla)u\cdot n=-(u\cdot\nabla)n\cdot u\\&=-(u\cdot n^\bot)(u\cdot\nabla)n\cdot n^\bot= (u\cdot n^\bot)(u\cdot\nabla)n^\bot\cdot n  \mbox{  on  }\partial\Omega ,\ea
\end{equation*} which gives\be\label{qe01} (\dot{u}-(u\cdot n^\bot)(u\cdot\nabla)n^\bot)\cdot n=0.\ee It follows from \eqref{qe01} and Poincar\'{e}'s inequality that
\begin{equation*}
\|\dot{u}-(u\cdot n^\bot)(u\cdot\nabla)n^\bot\|_{L^{3/2}}\leq C\|\nabla(\dot{u}-(u\cdot n^\bot)(u\cdot\nabla)n^\bot)\|_{L^{3/2}},
\end{equation*}
which leads to
\begin{equation*}
\|\dot{u}\|_{L^{3/2}}\leq C(\|\nabla\dot{u}\|_{L^{3/2}}+\|\nabla u\|_{L^2}^2).
\end{equation*}
Combining this and the  Sobolev embedding theorem implies that for $p>1,$
\begin{equation*}\ba
\|\dot{u}\|_{L^{p}}&\leq C(p)(\|\dot{u}\|_{L^2}+\|\nabla\dot{u}\|_{L^2})\\& \le C(p)(\|\dot{u}\|_{L^{3/2}}+\|\nabla\dot{u}\|_{L^{3/2}})+C (p) \|\nabla\dot{u}\|_{L^2}\\&\leq C(p)(\|\nabla\dot{u}\|_{L^2}+\|\nabla u\|_{L^2}^2),\ea
\end{equation*} which proves \eqref{41}.

 Finally, \eqref{402} is a direct consequence of    \eqref{qe01} and  \eqref{24}.
\end{proof}

Now we are ready to derive the lower order a priori estimates step by step.
\begin{lemma}\label{042}   There is a positive constant $C $ depending only on $\Omega , T,  \mu,\beta, \gamma$, $\|\rho_0\|_{L^\infty},$ and $\|u_0\|_{H^1}$ such that
\begin{equation}
\sup_{0\leq t\leq T}\sigma\int_\Omega\rho|\dot{u}|^2dx + \int^T_0 \sigma\|\nabla\dot{u}\|_{L^2}^2dt \leq C ,  \label{410}
\end{equation}
with $\sigma(t)=\min{\{1, t\}}$.  Moreover,  for any $p\in[1, \infty)$,  there is a positive constant $C(p )$ depending only on $p$,   $\Omega$, $T$,  $\mu$,  $\beta$,  $\gamma$,  $\|\rho_0\|_{L^\infty},$ and $\|u_0\|_{H^1}$ such that
\begin{equation}
\sup_{0\leq t\leq T} \sigma\|\nabla u\|^2_{L^p}\leq C(p ).  \label{411}
\end{equation}
\end{lemma}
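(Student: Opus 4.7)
My plan is to follow the Hoff/Huang-Li weighted material-derivative scheme, adapted to the slip boundary in the spirit of Cai-Li. The weight $\sigma(t)=\min\{1,t\}$ is needed because $\sqrt\rho\dot u$ is not controlled at $t=0$; it vanishes initially and absorbs the initial layer. First, rewrite \eqref{32} as $\rho\dot u=\nabla F+\mu\nabla^\bot\omega$, apply $\partial_t+\mathrm{div}(\cdot\,u)$ to both sides, and use the mass equation to derive
\[
\rho\ddot u^j=\partial_j F_t+\mathrm{div}(u\,\partial_j F)+\mu(\nabla^\bot_j\omega)_t+\mu\,\mathrm{div}(u\,\nabla^\bot_j\omega).
\]
Test with $\sigma\dot u$, integrate over $\Omega$, expand $F_t$ via $F=(2\mu+\lambda)\mathrm{div}u-P$, and integrate by parts. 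One obtains a schematic identity
\[
\tfrac{1}{2}\tfrac{d}{dt}\!\int_\Omega\!\sigma\rho|\dot u|^2dx+\sigma\!\int_\Omega\!\big((2\mu+\lambda)(\mathrm{div}\dot u)^2+\mu(\mathrm{curl}\dot u)^2\big)dx=\tfrac12\sigma'\!\int_\Omega\!\rho|\dot u|^2dx+\mathcal B+\mathcal R,
\]
where $\mathcal B$ collects the surface integrals produced by the integration by parts and $\mathcal R$ is a volume remainder containing products like $\int F\nabla u\nabla\dot u$, $\int\omega\nabla u\nabla\dot u$, $\int\lambda_\rho\rho_t(\mathrm{div}u)(\mathrm{div}\dot u)$, pressure terms, and terms cubic in $\nabla u$.

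The crux is the analysis of $\mathcal B$. Each boundary integral contains the factor $\dot u\cdot n$, which by \eqref{qe01} equals $(u\cdot n^\bot)(u\cdot\nabla)n^\bot\cdot n$ on $\partial\Omega$. Substituting this expression and applying the Cai-Li tangential identity recorded in the introduction converts every boundary integral into a volume integral of the form $\int_\Omega\nabla g\cdot\nabla^\bot\phi\,dx$, with $g\in\{F,\omega,(2\mu+\lambda)\mathrm{div}u\}$ and $\phi$ a smooth quadratic expression in $u$. This gives $|\mathcal B|\le C(\|F\|_{H^1}+\|\omega\|_{H^1})\|u\|_{H^1}^2$, a quantity that is integrable in $t$ by \eqref{3103}.

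The remainder $\mathcal R$ is then controlled exactly as in Huang-Li via Hölder, the Gagliardo-Nirenberg-type inequality \eqref{25}, and the elliptic estimate \eqref{336}, using $\|\rho\|_{L^\infty}\le C$, $\beta>4/3$, and the bounds of Proposition \ref{0002}. Combining these with \eqref{402} to replace $\|\mathrm{div}\dot u\|_{L^2}+\|\mathrm{curl}\dot u\|_{L^2}$ by $\|\nabla\dot u\|_{L^2}$ modulo a term in $\|\nabla u\|_{L^4}^2$ (itself in $L^1(0,T)$ by \eqref{3103}, \eqref{24} and \eqref{336}), one arrives at a Gronwall-type inequality
\[
\tfrac{d}{dt}\Big(\sigma\!\int_\Omega\rho|\dot u|^2dx\Big)+\sigma\|\nabla\dot u\|_{L^2}^2\le C\sigma'\!\int_\Omega\rho|\dot u|^2dx+g(t)\Big(\sigma\!\int_\Omega\rho|\dot u|^2dx\Big)+h(t),
\]
with $\int_0^T(g+h)\,dt\le C$, and integration yields \eqref{410}. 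For \eqref{411}, from \eqref{410} and $\|\rho\|_{L^\infty}\le C$ we have $\sigma^{1/2}\|\rho\dot u\|_{L^2}\le C$; applying \eqref{336} to $\nabla F+\mu\nabla^\bot\omega=\rho\dot u$ together with the $H^1$-bound on $F,\omega$ from \eqref{3103} yields $\sigma^{1/2}(\|F\|_{H^1}+\|\omega\|_{H^1})\le C$, whence \eqref{25} gives $\sigma^{1/2}(\|F\|_{L^p}+\|\omega\|_{L^p})\le C(p)$, and Lemma \ref{lm22} applied to $(2\mu+\lambda)\mathrm{div}u=F+P$ concludes \eqref{411}.

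I expect the main obstacle to be the systematic treatment of $\mathcal B$: unlike the periodic or whole-space cases, these surface integrals do not vanish, and they contain $\dot u\cdot n$, which is quadratic in $u$ by \eqref{qe01}. Rewriting each of them as a volume integral through the Cai-Li identity so that the boundary contribution is dominated by quantities of the form $(\|F\|_{H^1}+\|\omega\|_{H^1})\|u\|_{H^1}^2$—integrable in $t$ by Proposition \ref{0002}—is the technical heart of the argument; once this cancellation is carried out, the Gronwall step and the passage to \eqref{411} proceed in a routine way.
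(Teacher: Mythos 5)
Your overall scheme (apply $\partial_t+\mathrm{div}(\,\cdot\,u)$ to the momentum equation, test with $\dot u$, weight by $\sigma$, close by Gronwall, then read off \eqref{411} from the $L^p$-bound on $\nabla u$ in terms of $\|\sqrt\rho\dot u\|_{L^2}$) is exactly the paper's, and your treatment of the boundary term containing $u\cdot\nabla F$ via the Cai--Li tangential identity matches \eqref{qs1}. But there is a genuine gap in your analysis of $\mathcal B$: the integration by parts of $\int_\Omega\dot u\cdot\nabla F_t\,dx$ produces the surface integral $\int_{\partial\Omega}F_t\,(\dot u\cdot n)\,ds$, and this term is \emph{not} of the form $\int_\Omega\nabla g\cdot\nabla^\bot\phi\,dx$ with $g\in\{F,\omega,(2\mu+\lambda)\mathrm{div}u\}$. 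It involves the trace of $F_t$ (equivalently, of $(2\mu+\lambda)\mathrm{div}\dot u$ plus lower-order terms, by the identity for $F_t+u\cdot\nabla F$), and no norm at your disposal --- not $\|F\|_{H^1}$, not $\|\omega\|_{H^1}$, not $\|\nabla\dot u\|_{L^2}$ --- controls the boundary trace of $\mathrm{div}\dot u$; that would require $\dot u\in H^2$. So the claimed bound $|\mathcal B|\le C(\|F\|_{H^1}+\|\omega\|_{H^1})\|u\|_{H^1}^2$ is false for this term, and your Gronwall inequality does not close as written.

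The paper's resolution (see \eqref{qyi01}--\eqref{qs2}) is to first use $\dot u\cdot n=(u\cdot\nabla u)\cdot n=-u\cdot\nabla n\cdot u$ on $\partial\Omega$ and then integrate by parts \emph{in time} on the boundary:
\begin{equation*}
\int_{\partial\Omega}F_t(\dot u\cdot n)\,ds=-\frac{d}{dt}\int_{\partial\Omega}F(u\cdot\nabla n\cdot u)\,ds+\int_{\partial\Omega}F\,\partial_t(u\cdot\nabla n\cdot u)\,ds .
\end{equation*}
The total time derivative is carried through the differential inequality (after multiplying by $\sigma$ it is controlled at each time by \eqref{332}, i.e.\ $|\int_{\partial\Omega}F(u\cdot\nabla n\cdot u)ds|\le C\|F\|_{H^1}\|\nabla u\|_{L^2}^2$, together with Gronwall), while the remaining terms $K_1,K_2,K_3$, which involve only $F$, $u$, $\dot u$ and $u\cdot\nabla u$ on the boundary, are the ones amenable to trace estimates and the Cai--Li identity. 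This time-integration-by-parts on the boundary is the non-routine idea your proposal is missing; everything else in your outline (the volume remainder, the use of \eqref{402} and \eqref{qe98}, and the passage to \eqref{411}) is consistent with the paper.
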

\begin{proof}
First,  operating $\dot{u}_j[\partial/\partial t + \mathrm{div}(u\cdot)]$ to $\eqref{11}^j_2$,
summing with respect to $j$,   integrating the resulting equation over $\Omega$, and using the boundary condition \eqref{15}, one gets after integration
by parts that
\begin{equation}
\begin{split}
\frac{1}{2}\frac{d}{dt}\int_\Omega\rho|\dot{u}|^2dx =&\int_\Omega(\dot{u}\cdot\nabla F_t +\dot{u}_j\mathrm{div}(\partial_jF u))dx\\
&+\mu\int_\Omega(\dot{u}\cdot\nabla^\bot\omega_t +\dot{u}_j\partial_k((\nabla^\bot\omega)_ju_k))dx\\ \label{413}
=&\tilde J_1+\tilde J_2.
\end{split}
\end{equation}

Now,  we will estimate $\tilde J_1$ and $\tilde J_2$ respectively.
First, using the boundary condition \eqref{15}, we obtain after integration by parts that
\begin{equation}
\begin{split}
\tilde J_1=&\int_\Omega(\dot{u}\cdot\nabla F_t +\dot{u}_j\mathrm{div}(\partial_jF u))dx\\
=&\int_\Omega(\dot{u}\cdot\nabla F_t +\dot{u}_j\partial_j(u\cdot\nabla F)+\dot{u}_j\partial_jF\mathrm{div}u+\dot{u}\cdot\nabla u\cdot\nabla F)dx\\
=&\int_{\partial\Omega}(F_t+u\cdot\nabla F)(\dot{u}\cdot n)ds-\int_\Omega(F_t+u\cdot\nabla F)\mathrm{div}\dot{u}dx\\
&+\int_\Omega(\dot{u}_j\partial_jF\mathrm{div}u+\dot{u}\cdot\nabla u\cdot\nabla F)dx\\
=&\int_{\partial\Omega} F_t (\dot{u}\cdot n)ds+\int_{\partial\Omega} u\cdot\nabla F (\dot{u}\cdot n)ds-\int_\Omega(2\mu+\lambda)(\mathrm{div}\dot{u})^2dx \\
&+\int_\Omega\lambda'(\rho)\rho(\mathrm{div}u)^2 \mathrm{div}\dot{u}dx+\int_\Omega(2\mu+\lambda) \partial_iu_j\partial_ju_i\mathrm{div}\dot{u} dx\\
&+\gamma\int_\Omega P\mathrm{div}u\mathrm{div}\dot{u} dx+\int_\Omega(\dot{u}_j\partial_jF\mathrm{div}u+\dot{u}\cdot\nabla u\cdot\nabla F)dx \\
\le &\int_{\partial\Omega} F_t (\dot{u}\cdot n)ds+\int_{\partial\Omega} u\cdot\nabla F (\dot{u}\cdot n)ds-\mu\int_\Omega (\mathrm{div}\dot{u})^2dx \\
&+C+C\|\nabla u\|_{L^4}^4+C \int |\dot{u}| |\nabla F||\nabla u|dx ,  \label{414}
\end{split}
\end{equation}
where in the third equality we have used the following fact
\begin{equation*}
\begin{split}
F_t+u\cdot\nabla F=&\lambda_t\mathrm{div}u+(2\mu+\lambda)\mathrm{div}u_t +u\cdot\nabla((2\mu+\lambda)\mathrm{div}u)+P_t+u\cdot\nabla P \\
=&(\lambda_t+u\cdot\nabla\lambda)\mathrm{div}u +(2\mu+\lambda)\mathrm{div}\dot{u}-(2\mu+\lambda)\mathrm{div}(u\cdot\nabla u)\\
&+(2\mu+\lambda)\nabla \mathrm{div}u+\gamma P\mathrm{div}u \\
=&-\rho\lambda'(\rho)(\mathrm{div}u)^2+(2\mu+\lambda)\mathrm{div}\dot{u} +(2\mu+\lambda)\partial_iu_j\partial_ju_i +\gamma P\mathrm{div}u.
\end{split}
\end{equation*}
For the first   term on the righthand side of \eqref{414}, we have
\begin{equation}\label{qyi01}
\begin{split}
&\int_{\partial\Omega}F_t(\dot{u}\cdot n)ds\\&
= \int_{\partial\Omega}F_t(u\cdot\nabla u\cdot n)ds \\&
=  -\frac{d}{dt}\int_{\partial\Omega}F(u\cdot\nabla n\cdot u)ds+\int_{\partial\Omega}F(u\cdot\nabla n\cdot u)_t ds\\&
= -\frac{d}{dt}\int_{\partial\Omega}F(u\cdot\nabla n\cdot u)ds+\int_{\partial\Omega}F(u_t\cdot\nabla n\cdot u)ds+\int_{\partial\Omega}F(u\cdot\nabla n\cdot u_t)ds\\&
= -\frac{d}{dt}\int_{\partial\Omega}F(u\cdot\nabla n\cdot u)ds+\left[\int_{\partial\Omega}F(\dot{u}\cdot\nabla n\cdot u)ds+\int_{\partial\Omega}F(u\cdot\nabla n\cdot\dot{u})ds\right]\\
&\quad- \int_{\partial\Omega}F((u\cdot\nabla u)\cdot\nabla n\cdot u)ds-\int_{\partial\Omega}F(u\cdot\nabla n\cdot(u\cdot\nabla u))ds \\&
= -\frac{d}{dt}\int_{\partial\Omega}F(u\cdot\nabla n\cdot u)ds+K_1+K_2+K_3.
\end{split}
\end{equation}

Then,  by \eqref{311},  \eqref{305},     \eqref{41},  \eqref{qp70} and \eqref{3103}, we get
\begin{equation}\label{qyi02}
\begin{split}
K_1=&\int_{\partial\Omega}F(\dot{u}\cdot\nabla n\cdot u)ds+\int_{\partial\Omega}F(u\cdot\nabla n\cdot\dot{u})ds\\
\leq&C\| u\|_{H^1}\| \dot{u}\|_{H^1}\|  F\|_{H^1}\\
\leq&C \|\nabla u\|_{L^2}(\|\nabla\dot{u}\|_{L^2}+\|\nabla u\|_{L^2}^2)(\|\sqrt\rho\dot{u}\|_{L^2}+\|\nabla u\|_{L^2})\\
\leq&\varepsilon\|\nabla\dot{u}\|_{L^2}^2+C(\varepsilon  ) \|\sqrt\rho\dot{u}\|_{L^2}^2+ C(\varepsilon  ).
\end{split}
\end{equation}
For $K_2,$ by taking advantage of \eqref{qe03}, we have
\begin{equation}\label{qyi03}
\begin{split}
|K_2|=&\left|\int_{\partial\Omega}F((u\cdot\nabla u)\cdot\nabla n\cdot u)ds \right|\\=&\left|\int_{\partial\Omega}F(u\cdot n^\bot)n^\bot \cdot\nabla u_i\partial_i n_j u_jds \right|\\=&\left|\int_{\Omega}\nabla^\bot \cdot\left(\nabla u_i\partial_i n_j u_jF(u\cdot n^\bot)\right)dx \right|\\=&\left|\int_{\Omega}\nabla u_i\cdot\nabla^\bot \left(\partial_i n_j u_jF(u\cdot n^\bot)\right)dx \right|\\ \le& C\int_\Omega |\nabla u|\left(|F||u|^2+|F||u||\nabla u|+|u|^2|\nabla F|\right)dx \\ \le & C\|\nabla u \|_{L^4}\left(\|F\|_{L^4}\|u\|_{L^4}^2+\|F\|_{L^4}\|u\|_{L^4}\|\nabla u\|_{L^4}+\|\nabla F\|_{L^2}\|u\|_{L^8}^2\right)\\ \le & C\|\nabla u \|_{L^4}^4+C \|F\|_{H^1}^2+C  \\
\leq&C\|\nabla u\|_{L^4}^4+C  \|\sqrt\rho\dot{u}\|_{L^2}^2+C.
\end{split}
\end{equation}
Similarly, for $K_3,$ we also have \bnn
|K_3|\leq C\|\nabla u\|_{L^4}^4+C  \|\sqrt\rho\dot{u}\|_{L^2}^2+C,\enn
which together with \eqref{qyi01}--\eqref{qyi03} leads to
\begin{equation}\label{qs2}
\begin{split}
\int_{\partial\Omega}F_t(\dot{u}\cdot n)ds
\leq&-\frac{d}{dt}\int_{\partial\Omega}F(u\cdot\nabla n\cdot u)ds+\varepsilon\|\nabla\dot{u}\|_{L^2}^2\\
&+C(\varepsilon )\left(\|\sqrt\rho\dot{u}\|_{L^2}^2+1+\|\nabla u\|_{L^4}^4\right).
\end{split}
\end{equation}
  For the second term on the righthand side of \eqref{414},
\begin{equation}\label{qs1}
\begin{split}
\int_{\partial\Omega}(u\cdot\nabla F)(\dot{u}\cdot n)ds =& \int_{\partial\Omega}(u\cdot n^\bot)n^\bot\cdot\nabla F(\dot{u}\cdot n)  ds\\
=& \int_\Omega\nabla^\bot\cdot((u\cdot n^\bot) \nabla F(\dot{u}\cdot n) )dx\\
=& \int_\Omega \nabla F\cdot\nabla^\bot((u\cdot n^\bot)(\dot{u}\cdot n) )dx\\
 \leq&C\int_\Omega |\nabla F||\dot{u}||\nabla u|dx+C\int_\Omega |\nabla F||\nabla \dot u||u|dx   \\
 \leq&C\|\nabla F\|_{L^2}\|\dot u\|_{L^4}\|\nabla u\|_{L^4} +C\|\nabla F\|_{L^2}\|\nabla\dot u\|_{L^2}\|\nabla u\|_{L^4} \\
\leq&\varepsilon \|\nabla \dot u\|_{L^2}^2 +C(\varepsilon)
\left(1+\|\sqrt\rho\dot{u}\|_{L^2}^2\right)\left(\|\nabla u\|_{L^4}^4+1\right) ,
\end{split}
\end{equation} which implies that the   last term of $\tilde J_1$ can be also bounded by the righthand side of \eqref{qs1}.

 Putting \eqref{qs2} and \eqref{qs1} into \eqref{414} leads to
\begin{equation}\label{415}
\ba
\tilde J_1\leq&-\mu\int_\Omega (\mathrm{div}\dot{u})^2dx-\frac{d}{dt}\int_{\partial\Omega}F(u\cdot\nabla n\cdot u)ds\\
&+C\varepsilon\|\nabla\dot{u}\|_{L^2}^2+C(\varepsilon)
\left(1+\|\sqrt\rho\dot{u}\|_{L^2}^2\right)\left(\|\nabla u\|_{L^4}^4+1\right) .
\ea
\end{equation}

Next we use the boundary condition \eqref{15} to get \bnn {\rm div}(u\omega)={\rm div}u \omega+u\cdot\nabla \omega=0 \mbox{ on }\partial \Omega,\enn
 which leads to
\bnn
\ba\tilde J_2=&\mu\int_\Omega(\dot{u}\cdot\nabla^\bot\omega_t +\dot{u}_j\partial_k((\nabla^\bot\omega)_ju_k))dx\\
=&-\mu\int_\Omega \mathrm{curl}\dot{u}\omega_tdx +\mu\int_\Omega\dot{u}\cdot\nabla^\bot(\mathrm{div}(u\omega)))dx \\&-\mu\int_\Omega\dot{u}_j \partial_k ((\nabla^\bot u_k)_j\omega)dx \\
=&-\mu\int_\Omega (\mathrm{curl}\dot{u})^2dx+\mu\int_\Omega \mathrm{curl}\dot{u}\mathrm{curl}(u\cdot\nabla u)dx\\&-\mu\int_\Omega {\rm curl}\dot{u}  \mathrm{div}(u\omega  )dx -\mu\int_\Omega\dot{u}_j\partial_k((\nabla^\bot u_k)_j\omega)dx \\
=&-\mu\int_\Omega (\mathrm{curl}\dot{u})^2dx+\mu\int_\Omega \mathrm{curl}\dot{u}(\nabla^\bot u)^T:\nabla udx\\
&-\mu\int_\Omega \mathrm{curl}\dot{u} \, \mathrm{div}u\,\omega dx+\mu\int_\Omega \partial_k\dot{u}_j (\nabla^\bot u_k)_j\omega dx\\
\leq&-\mu\int_\Omega (\mathrm{curl}\dot{u})^2dx+\varepsilon\|\nabla\dot{u}\|_{L^2}^2+C(\varepsilon) \|\nabla u\|_{L^4}^4.
\ea
\enn
Putting this and \eqref{415} into   \eqref{413} gives
\begin{equation}\label{qu01}
\begin{split}
&\frac{1}{2}\frac{d}{dt}\int_\Omega\rho|\dot{u}|^2dx +\mu\int_\Omega(\mathrm{div}\dot{u})^2dx+\mu\int_\Omega (\mathrm{curl}\dot{u})^2dx\\
&\leq-\frac{d}{dt}\int_{\partial\Omega}F(u\cdot\nabla n\cdot u)ds+ C\varepsilon\|\nabla\dot{u}\|_{L^2}^2\\&\quad+C(\varepsilon)
\left(1+\|\sqrt\rho\dot{u}\|_{L^2}^2\right)\left(\|\nabla u\|_{L^4}^4+1\right)\\
&\leq-\frac{d}{dt}\int_{\partial\Omega}F(u\cdot\nabla n\cdot u)ds+ C\varepsilon\|\nabla\dot{u}\|_{L^2}^2 +C(\varepsilon)
\left(1+\|\sqrt\rho\dot{u}\|_{L^2}^2\right)^2,
\end{split}
\end{equation}
where in the last inequality we have used
\begin{equation}\label{qe98}
\begin{split}
\|\nabla u\|_{L^4}^4\leq C  \|\sqrt\rho\dot{u}\|_{L^2}^2+C  ,
\end{split}
\end{equation} due to \eqref{qp31} and \eqref{3103}.
Combining \eqref{qu01}, \eqref{402},  \eqref{qe98},   and choosing $\varepsilon$ suitably small yields
\begin{equation}
\begin{split}&
\frac{d}{dt}\int_\Omega\rho|\dot{u}|^2dx+C_2^{-1}\mu\int_\Omega |\nabla\dot{u}|^2dx\\&
\leq -\frac{d}{dt}\int_{\partial\Omega}2F(u\cdot\nabla n\cdot u)ds+C \left(\|\sqrt\rho\dot{u}\|_{L^2}^2+1\right)^2. \label{43}
\end{split}
\end{equation}
Multiplying \eqref{43} by $\sigma$,  one gets \eqref{410} after using \eqref{332},   \eqref{3103}, and Gronwall's inequality.

  Finally, it follows from \eqref{qp31}, \eqref{3103}, and \eqref{410}    that for $p\geq 2$,
\begin{equation*}
\begin{split}
\sigma\|\nabla u\|_{L^p}^2\leq& C(p)\sigma\|\sqrt\rho\dot{u}\|^2_{L^2}+C(p)\leq C(p),
\end{split}
\end{equation*} which shows \eqref{411} and finishes the proof of Lemma \ref{042}.
\end{proof}

\begin{lemma}
Assume that \eqref{17} holds.  Then for any $p>2$,  there is a positive constant C depending only on $\Omega,$ $p$,  $T$,  $\mu$,  $\beta$,  $\gamma$,  $\|\rho_0\|_{L^\infty},$ and $\|u_0\|_{H^1}$ such that
\begin{equation}
\ba\label{430}
& \int_0^T( \|\nabla F\|_{L^p}+\|\nabla\omega\|_{L^p}+\|\rho\dot{u}\|_{L^p})^{1+1/p}dt +\int_0^T(\|F\|^{3/2}_{L^\infty}+\|\omega\|^{3/2}_{L^\infty})dt\\
&+\int_0^Tt(\|\nabla F\|_{L^p}^2+\|\nabla\omega\|_{L^p}^2+\|\dot{u}\|_{H^1}^2)dt\leq C,
\ea
\end{equation} and that \be\label{puw2}  \inf_{(x,t)\in \Omega\times(0,T)}\rho(x,t)\ge C^{-1}\inf_{x\in \Omega}\rho_0(x).\ee
\end{lemma}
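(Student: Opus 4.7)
\textbf{The plan} is to combine the elliptic theory for the Neumann/Dirichlet problems solved by $F$ and $\omega$ with the upper bound of $\rho$ from Proposition \ref{0002} and the $\sigma$-weighted estimates from Lemma \ref{042}. The $L^p$ estimate \eqref{336} applied to \eqref{qp18} and the corresponding Dirichlet problem for $\omega$ gives
\[
\|\nabla F\|_{L^p}+\|\nabla\omega\|_{L^p}\le C(p)\|\rho\dot u\|_{L^p},
\]
so the three quantities in the first integrand of \eqref{430} are comparable, and it suffices to bound $\int_0^T\|\rho\dot u\|_{L^p}^{1+1/p}dt$. Using $\rho\in L^\infty$, a Gagliardo--Nirenberg interpolation $\|\dot u\|_{L^{2p}}\le C\|\dot u\|_{H^1}$, and the Poincar\'e-type bound \eqref{41} for $\dot u$, I first derive an interpolation of the form
\[
\|\rho\dot u\|_{L^p}\le C\|\sqrt\rho\,\dot u\|_{L^2}^{\theta_1}\bigl(\|\nabla\dot u\|_{L^2}+1\bigr)^{\theta_2},\qquad \theta_1+\theta_2=1,
\]
obtained by writing $\int\rho^p|\dot u|^p\le\|\rho\|_{L^\infty}^{p-1}\int\rho|\dot u|^p$ and splitting $\int\rho|\dot u|^p$ through H\"older as $\|\sqrt\rho\dot u\|_{L^2}^{?}\|\dot u\|_{L^{2p}}^{?}$. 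Raising to the $(1+1/p)$-power and integrating in $t$ via a three-factor H\"older inequality, splitting into a power of $\|\sqrt\rho\dot u\|_{L^2}^2$, a power of $\sigma\|\nabla\dot u\|_{L^2}^2$, and a residual $\sigma$-weight, and then invoking $\int_0^T\|\sqrt\rho\dot u\|_{L^2}^2\,dt\le C$ from \eqref{3103} together with $\int_0^T\sigma\|\nabla\dot u\|_{L^2}^2\,dt\le C$ from \eqref{410}, delivers the first integral in \eqref{430}.

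\textbf{For} the $L^{3/2}(0,T;L^\infty)$ bound on $F$ and $\omega$, I invoke the 2D Gagliardo--Nirenberg estimate
\[
\|F\|_{L^\infty}\le C\|F\|_{L^2}^{(p-2)/(2(p-1))}\|\nabla F\|_{L^p}^{p/(2(p-1))}+C\|F\|_{L^2},
\]
and analogously for $\omega$. Combining the $L^2(0,T;H^1)$-bound on $F,\omega$ from \eqref{3103} with the $L^{1+1/p}(0,T;L^p)$ bound on $\nabla F,\nabla\omega$ just obtained, a H\"older-in-time argument (tuned so the exponents close precisely at $3/2$) produces the claim. The $t$-weighted piece is then almost immediate: via \eqref{41} one has $\|\dot u\|_{H^1}\le C(\|\nabla\dot u\|_{L^2}+1)$, so $\int_0^T t\|\dot u\|_{H^1}^2\,dt\le T\int_0^T\sigma\|\nabla\dot u\|_{L^2}^2\,dt+CT\le C$; the corresponding bounds for $t\|\nabla F\|_{L^p}^2$ and $t\|\nabla\omega\|_{L^p}^2$ follow by squaring the interpolation above and applying \eqref{410} and \eqref{3103}.

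\textbf{For} the lower bound \eqref{puw2}, I integrate the continuity equation along a particle path $x(t)$ of $u$, giving
\[
\log\rho(x(t),t)=\log\rho_0(x(0))-\int_0^t\operatorname{div}u(x(s),s)\,ds.
\]
Since $\operatorname{div}u=(F+P)/(2\mu+\lambda(\rho))$ with $\|P\|_{L^\infty}$ already bounded by the upper bound of $\rho$, and by H\"older
\[
\int_0^T\|F\|_{L^\infty}\,dt\le T^{1/3}\bigl(\int_0^T\|F\|_{L^\infty}^{3/2}\,dt\bigr)^{2/3}\le C
\]
(and similarly for $\omega$) from the preceding step, the right-hand side is bounded uniformly in $(x,t)$, and exponentiation yields \eqref{puw2}.

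\textbf{The main obstacle} lies in the first step: the interpolation exponents $\theta_1,\theta_2$ and the three powers used in the time-H\"older must be tuned so that the residual singular weight $\sigma^{-\beta}$ that appears is integrable on $[0,T]$ for \emph{every} $p>2$. This forces the simultaneous use of the global (unweighted) bound $\int_0^T\|\sqrt\rho\dot u\|_{L^2}^2\,dt\le C$ and the weighted bound $\int_0^T\sigma\|\nabla\dot u\|_{L^2}^2\,dt\le C$, the former being indispensable because $\|\nabla\dot u\|_{L^2}$ alone is only known to lie in the $\sigma$-weighted $L^2$-in-time space and is not by itself an element of $L^{1+1/p}(0,T)$.
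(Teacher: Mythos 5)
Your proposal is correct and follows essentially the same route as the paper: elliptic estimates \eqref{336} reduce everything to $\|\rho\dot u\|_{L^p}$, which is interpolated between $\|\sqrt\rho\,\dot u\|_{L^2}$ and $\|\nabla\dot u\|_{L^2}$ via \eqref{41}, the time integrals are closed by Young/H\"older using both the unweighted bound from \eqref{3103} and the $\sigma$-weighted bound \eqref{410} (the paper's residual weight is $t^{-1+2/(p^3-p^2-2p+2)}$), and the $L^{3/2}(0,T;L^\infty)$ control of $F,\omega$ comes from the same Gagliardo--Nirenberg interpolation. Your particle-path argument for \eqref{puw2} is exactly what the paper compresses into ``a direct consequence of \eqref{430} and \eqref{11}$_1$.''
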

\begin{proof} First, it follows from \eqref{41}   and \eqref{3103} that
\begin{equation*}
\begin{split}
\|\rho\dot{u}\|_{L^p} &\leq C\|\rho\dot{u}\|_{L^2}^{2(p-1)/(p^2-2)}\|\dot{u}\|_{H^1}^{p(p-2)/(p^2-2)}\\
&\leq   C\|\rho\dot{u}\|_{L^2} +C\|\rho\dot{u}\|_{L^2}^{2(p-1)/(p^2-2)}\|\nabla\dot{u}\|_{L^2}^{p(p-2)/(p^2-2)}, \\
\end{split}
\end{equation*}
which together with \eqref{3103}, \eqref{41},   \eqref{410}, and \eqref{qe98} implies that
\begin{equation}
\begin{split}
&\int_0^T\left(\|\rho\dot{u}\|_{L^p}^{1+1/p}+t\|\dot{u}\|_{H^1}^2\right)dt\\
& \leq C\int_0^T\left(\|\sqrt\rho\dot{u}\|_{L^2}^2+t\|\nabla\dot{u}\|_{L^2}^2 +t^{-1+2/(p^3-p^2-2p+2)}\right)dt\leq C.  \label{444}
\end{split}
\end{equation}
Then, the Gargliardo-Nirenberg inequality, \eqref{336}, and \eqref{3103} yield that
\begin{equation}\label{433}
\ba&
  \|F\|_{L^\infty} +  \|\omega\|_{L^\infty} \\
 &\leq C\|F\|_{L^2}+ C\|\omega\|_{L^2}+ C\|F\|_{L^2}^{\frac{p-2}{2(p-1)}}\|\nabla F\|^{\frac{p}{2(p-1)}}_{L^p}+ C\|\omega\|_{L^2}^{\frac{p-2}{2(p-1)}}\|\nabla \omega\|^{\frac{p}{2(p-1)}}_{L^p}  \\
&\leq C+C\|\rho\dot{u}\|^{\frac{p}{2(p-1)}}_{L^p} , \\
\ea
\end{equation}
which together with  \eqref{444} and \eqref{336} gives \eqref{430}.

Finally, \eqref{puw2} is a direct consequence of  \eqref{430} and  \eqref{11}$_1.$
\end{proof}

Upon now,  we have finished the lower order a priori estimates,  and will turn to the higher order ones.    Since no other
boundary terms needed to be controlled,  we just follow \cite{huang2016existence} to derive our final  a priori estimates.  For the sake of completeness,  we sketch the proof here.
\begin{proposition}\label{qou10}
Assume that \eqref{17} holds.  Then,  for $q > 2$ as in Theorem \ref{thmq1},  there is a constant $\tilde C$ depending only on $\Omega$,  $T$,  $q$,  $\mu$,  $\gamma$,  $\beta$,  $\|u_0\|_{H^1},$ and $\|\rho_0\|_{W^{1, q}}$ such that
\begin{equation}\label{434}\ba
&\sup_{0\leq t\leq T}\left(\|\rho\|_{W^{1, q}} +t\|u\|^2_{H^2}\right)\\&+\int^T_0\left(\|\nabla^2 u\|_{L^q}^{(q+1)/q}+t\|\nabla^2 u\|_{L^q}^2+t\|u_t\|^2_{H^1}\right)dt \leq \tilde C.  \ea
\end{equation}
\end{proposition}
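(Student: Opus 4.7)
The plan is to bootstrap in the standard way: bound $\|\nabla\rho\|_{L^q}$ by coupling the transport character of $\eqref{11}_1$ with the $L^q$-elliptic theory for the momentum equation, then deduce the $\nabla^2u$, $t\|u\|_{H^2}^2$ and $t\|u_t\|_{H^1}^2$ bounds in a routine manner. All the boundary difficulties were already absorbed in Sections \ref{sec2} and \ref{sec3}, so no new boundary cancellation is required at this stage.

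First, since $u\cdot n=0$ and $\mathrm{curl}\,u=0$ on $\partial\Omega$, Lemma \ref{lm22} applied to $\nabla u$, combined with \eqref{336}, the identity $F=(2\mu+\lambda(\rho))\mathrm{div}\,u-P$, and the uniform bound on $\rho$ from \eqref{3103}, yields the elliptic estimate
\begin{equation*}
\|\nabla^2u\|_{L^q}\le C\|\rho\dot u\|_{L^q}+C\|\nabla\rho\|_{L^q}+C.
\end{equation*}
Next, differentiating $\eqref{11}_1$, multiplying by $q|\nabla\rho|^{q-2}\nabla\rho$, integrating over $\Omega$, and using $u\cdot n|_{\partial\Omega}=0$ to kill the convective boundary term gives the transport inequality
\begin{equation*}
\frac{d}{dt}\|\nabla\rho\|_{L^q}\le C\|\nabla u\|_{L^\infty}\|\nabla\rho\|_{L^q}+C\|\nabla^2u\|_{L^q}.
\end{equation*}
Applying Lemma \ref{qo01} together with the trivial bound $\|\mathrm{div}\,u\|_{L^\infty}+\|\omega\|_{L^\infty}\le C(1+\|F\|_{L^\infty}+\|\omega\|_{L^\infty})$ then controls $\|\nabla u\|_{L^\infty}$ by
\begin{equation*}
C\bigl(1+\|F\|_{L^\infty}+\|\omega\|_{L^\infty}\bigr)\log\bigl(e+\|\rho\dot u\|_{L^q}+\|\nabla\rho\|_{L^q}\bigr)+C.
\end{equation*}

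Setting $f(t)\triangleq e+\|\nabla\rho\|_{L^q}(t)$ and combining the three displays above produces a Bihari-type inequality of the shape
\begin{equation*}
f'(t)\le Ch(t)f(t)\log\bigl(f(t)+g(t)\bigr)+Cg(t)+Cf(t),
\end{equation*}
where $h\triangleq 1+\|F\|_{L^\infty}+\|\omega\|_{L^\infty}$ and $g\triangleq 1+\|\rho\dot u\|_{L^q}$ satisfy $h\in L^{3/2}(0,T)$ and $g\in L^{1+1/q}(0,T)$ by \eqref{430}. Dividing by $f$ and working with $\log f$ converts this into a linear differential inequality to which a classical Gronwall argument applies, yielding $\sup_{0\le t\le T}\|\nabla\rho\|_{L^q}\le C$. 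Feeding this back into the $W^{2,q}$-estimate together with \eqref{430} and \eqref{444} recovers $\int_0^T\bigl(\|\nabla^2u\|_{L^q}^{(q+1)/q}+t\|\nabla^2u\|_{L^q}^2\bigr)dt\le C$; the $L^2$-analog of the same elliptic estimate combined with \eqref{410} gives $\sup_{0\le t\le T}t\|u\|_{H^2}^2\le C$; and writing $u_t=\dot u-u\cdot\nabla u$ and invoking \eqref{410}, \eqref{411}, \eqref{444} produces the last piece $\int_0^T t\|u_t\|_{H^1}^2\,dt\le C$.

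The only genuinely delicate step is the logarithmic Gronwall argument: the coefficient $h(t)$ multiplying $\log f$ is merely $L^{3/2}$-integrable in time rather than bounded, so one must isolate the logarithm and exploit its slow growth before invoking standard Gronwall, carefully balancing the $L^{3/2}$-bound on $h$ from \eqref{430} against the $L^{1+1/q}$-bound on $g$. The remaining pieces of \eqref{434} are then entirely routine once $\|\nabla\rho\|_{L^q}$ is under control.
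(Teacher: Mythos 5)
Your proof is correct and takes essentially the same route as the paper's (a transport inequality for the density gradient, the $W^{2,q}$ elliptic estimate, the Beale--Kato--Majda bound of Lemma \ref{qo01}, and a logarithmic Gronwall argument); the paper works with $\Psi=(2\mu+\lambda(\rho))\nabla\rho$ rather than $\nabla\rho$ and uses \eqref{433} to replace your $L^{3/2}$-in-time coefficient $1+\|F\|_{L^\infty}+\|\omega\|_{L^\infty}$ by $1+\|\rho\dot u\|_{L^q}\in L^{1+1/q}(0,T)$, but these are cosmetic differences. The one point to correct is your elliptic estimate, which should carry the factor $(1+\|\mathrm{div}u\|_{L^\infty})$ in front of $\|\nabla\rho\|_{L^q}$ as in \eqref{437}, because $\nabla\mathrm{div}u=\nabla\big(\frac{F+P}{2\mu+\lambda}\big)$ generates the product $\mathrm{div}u\,\lambda'(\rho)\nabla\rho$ and $\|\mathrm{div}u\|_{L^\infty}$ is not yet known to be uniformly bounded in time; this extra factor is harmlessly absorbed into your $Ch(t)f(t)\log(f+g)$ term, so the argument goes through unchanged.
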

\begin{proof}
We will follow the proof of \cite{huang2016existence}.
First,  denoting by $\Psi=(\Psi_1,  \Psi_2)$ with $\Psi_i\triangleq(2\mu + \lambda(\rho))\partial_i\rho$ $(i=1, 2)$,  one deduces from $\eqref{11}_1$ that $\Psi_i$ satisfies
\begin{equation}
\partial_t\Psi_i + (u\cdot\nabla)\Psi_i + (2\mu + \lambda(\rho))\nabla\rho\cdot \partial_iu + \rho \partial_iF + \rho\partial_iP + \Psi_i\mathrm{div}u = 0.  \label{435}
\end{equation}
For $q>2$,  multiplying \eqref{435} by $|\Psi|^{q-2}\Psi_i$ and integrating the resulting equation over $\Omega$,  we obtain after
integration by parts and using \eqref{15} to cancel out boundary term that
\begin{equation}
\frac{d}{dt}\|\Psi\|_{L^q}\leq \tilde C(1+\|\nabla u\|_{L^\infty})\|\Psi\|_{L^q}+\tilde C\|\nabla F\|_{L^q}.  \label{436}
\end{equation}

Next,    we deduce from standard $L^p$-estimate for elliptic system with boundary condition \eqref{15},  \eqref{433}, and \eqref{336} that
\begin{equation}\label{437}
\ba
\|\nabla^2u\|_{L^q}\leq& \tilde C(\|\nabla \mathrm{div}u\|_{L^q}+\|\nabla\omega\|_{L^q})\\
\leq&\tilde C(\|\nabla(2\mu+\lambda)\mathrm{div}u)\|_{L^q}+\|\mathrm{div}u\|_{L^\infty}\|\nabla\rho\|_{L^q}+\|\nabla\omega\|_{L^q})\\
\leq&\tilde C(\|\mathrm{div}u\|_{L^\infty}+1)\|\nabla\rho\|_{L^q}+\tilde C\|\nabla F\|_{L^q}+\tilde C\|\nabla\omega\|_{L^q}\\
\leq&\tilde C(\|\rho\dot{u}\|_{L^q}+1)\|\nabla\rho\|_{L^q} +\tilde C\|\rho\dot{u}\|_{L^q} .
\ea
\end{equation}

Then it follows from Lemma  \ref{qo01},  \eqref{433}, and \eqref{437} that
\begin{equation}
\begin{split}
\|\nabla u\|_{L^\infty}\leq& \tilde C(\|\mathrm{div}u\|_{L^\infty}+\|\omega\|_{L^\infty})\mathrm{log}(e+\|\nabla^2u\|_{L^q})+\tilde C\|\nabla u\|_{L^2}+\tilde C\\
\leq& \tilde C(1+\|\rho\dot{u}\|_{L^q})\mathrm{log}(e+\|\nabla\rho\|_{L^q}+\|\rho\dot{u} \|_{L^q})+\tilde C .  \label{438}
\end{split}
\end{equation}
    Noticing that
\begin{equation}
2\mu\|\nabla\rho\|_{L^q}\leq\|\Psi\|_{L^q}\leq \tilde C\|\nabla\rho\|_{L^q},   \label{439}
\end{equation} substituting \eqref{438} into \eqref{436}, and using \eqref{336}, one gets
\begin{equation*}
\frac{d}{dt}\mathrm{log}(e+\|\Psi\|_{L^q})\leq \tilde C(1+\|\rho\dot{u}\|_{L^q})\mathrm{log}(e+\|\Psi\|_{L^q})+\tilde C\|\rho\dot{u}\|^{1+1/q}_{L^q},
\end{equation*}
which together with Gronwall's inequality,  \eqref{336},  \eqref{430}, and \eqref{439} yields that
\begin{equation}
\sup_{0\leq t\leq T}\|\nabla\rho\|_{L^q}\leq \tilde C.  \label{440}
\end{equation}
Combining \eqref{437},   \eqref{440},  and \eqref{430} gives
\begin{equation}
\int_0^T\left(\|\nabla^2u\|_{L^q}^{1+1/q}+t\|\nabla^2u\|_{L^q}^2\right)dt\leq \tilde C. \label{442}
\end{equation}

Finally,  it follows from \eqref{41}, \eqref{402},  \eqref{qe98}, \eqref{3103},  \eqref{410}, and \eqref{442} that
\begin{equation}\label{b442}
\begin{split}
\int_0^Tt\|u_t\|_{H^1}^2dt\leq&\tilde C\int_0^Tt\left(\|\dot u\|^2_{H^1}+\|u\cdot \nabla u\|_{H^1}^2\right)dt\\ \leq& \tilde C\int_0^Tt\left(1+\|\nabla \dot u\|_{L^2}^2+\|\nabla u\|^4_{L^4}+\|u\|_{H^1}^2\|\nabla u\|_{L^q}^2\right)dt\\
\leq&  \tilde C.
\end{split}
\end{equation}

We obtain from \eqref{3103},  \eqref{336}, and \eqref{440} that
\begin{equation*}
\begin{split}
\|\nabla^2u\|_{L^2}\leq& \tilde C(\|\nabla \mathrm{div}u\|_{L^2}+\|\nabla\omega\|_{L^2})\\
\leq&\tilde C(\|\nabla(2\mu+\lambda)\mathrm{div}u)\|_{L^2}
+\|\mathrm{div}u\|_{L^{2q/(q-2)}}\|\nabla\rho\|_{L^q}+\|\nabla\omega\|_{L^2})\\
\leq&\tilde C\|\nabla F\|_{L^2}+\tilde C\|\nabla\omega\|_{L^2}+\tilde C+\tilde C\|\nabla u\|_{L^2}^{(q-2)/q}\|\nabla^2 u\|_{L^2}^{2/q}\\
\leq&\frac{1}{2}\|\nabla^2 u\|_{L^2}+\tilde C\|\rho\dot{u}\|_{L^2}+\tilde C,
\end{split}
\end{equation*}
which together with \eqref{410} gives
\begin{equation*}
\sup_{0\leq t\leq T}t\|\nabla^2u\|^2_{L^2}\leq \tilde C.
\end{equation*}
Combining this, \eqref{440}--\eqref{b442},   and \eqref{3103} yields \eqref{434} and finishes   the proof of Proposition \ref{qou10}.
\end{proof}

\section{\label{sec4}Proofs of Theorems \ref{thmq1} and \ref{thmq2}}
After all preparation done, the proof of Theorem \ref{thmq1} is in the same way as that of \cite[Theorem 1.1]{huang2016existence} after some slight modifications.

We first state the global existence of strong solution $(\rho,  u)$ provided that \eqref{17} holds and that $(\rho_0,  m_0)$ satisfies \eqref{21} whose proof is similar to that of \cite[Proposition 5.1]{huang2016existence} after some slight modifications.
\begin{proposition} \label{thmq3}
Assume that \eqref{17} holds and that $(\rho_0,  m_0)$ satisfies \eqref{21}.  Then there exists a unique strong solution $(\rho,  u)$ to \eqref{11}-\eqref{15} in $\Omega\times(0,  \infty)$ satisfying \eqref{22} for any $T\in(0, \infty)$.  In addition,  for any $q > 2$,  $(\rho,  u)$ satisfies \eqref{434} with some positive constant $C$ depending only on $\Omega,$ $T$,  $q$,  $\mu$,  $\gamma$,  $\beta$,  $\|u_0\|_{H^1}$,  and $\|\rho_0\|_{W^{1, q}}$.
\end{proposition}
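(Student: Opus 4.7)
The plan is to run a standard continuation argument based on the local existence theorem (Lemma \ref{u21}) together with the a priori bounds established in Sections \ref{sec2} and \ref{sec3}. First, Lemma \ref{u21} gives a unique local strong solution $(\rho, u)$ on $\Omega \times [0, T_0]$ in the regularity class \eqref{22}. Define
\begin{equation*}
T^{*} \triangleq \sup\{\,T > 0 : (\rho, u) \text{ is a strong solution of \eqref{11}--\eqref{15} on } \Omega \times [0, T] \text{ with regularity \eqref{22}}\},
\end{equation*}
and argue by contradiction that $T^{*} = \infty$. If $T^{*} < \infty$, the strategy is to show that the solution retains enough regularity as $t \uparrow T^{*}$ that Lemma \ref{u21} can be applied again at time $T^{*}$, extending past $T^{*}$ and contradicting maximality.

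The core observation is that all estimates obtained in Sections \ref{sec2} and \ref{sec3} depend only on $\Omega$, $T^{*}$, $\mu$, $\beta$, $\gamma$, $\|\rho_0\|_{L^\infty}$, $\|u_0\|_{H^1}$, and $\|\rho_0\|_{W^{1,q}}$, and remain uniform up to $t = T^{*}$. In particular, Proposition \ref{0002} gives the upper bound on $\rho$, estimate \eqref{puw2} gives a positive lower bound on $\rho$, Proposition \ref{qou10} propagates $\|\rho\|_{W^{1,q}}$, and \eqref{434} controls $\|u\|_{H^2}$ and the time-integrated $L^2(H^3)$ norm of $u$. These bounds allow one to pass to the limit $t \to T^{*}$ in the relevant function spaces and define limit data $(\rho(\cdot, T^{*}), u(\cdot, T^{*}))$.

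To legitimately invoke Lemma \ref{u21} at $t = T^{*}$, however, one needs $\rho(\cdot, T^{*}) \in H^{2}$ and $u(\cdot, T^{*}) \in H^{2}$ satisfying the compatibility and boundary conditions in \eqref{21}. I would propagate $H^{2}$ regularity of $\rho$ directly from the mass equation $\rho_t + u\cdot\nabla\rho + \rho\,\mathrm{div}\,u = 0$: differentiating twice, multiplying by $\nabla^{2}\rho$ in $L^{2}$, and using Gronwall together with the $\|u\|_{H^{3}}$ integrability from \eqref{434} gives $\rho \in L^\infty(0, T^{*}; H^{2})$, hence $\rho_t \in C([0,T^{*}]; H^{1})$. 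For the velocity, I would differentiate \eqref{32} in time, test against $u_{tt}$, and handle the boundary terms that appear after integration by parts by using the tangential decomposition $u = (u \cdot n^{\bot})n^{\bot}$ on $\partial\Omega$ in the spirit of \eqref{qe03} and Lemma \ref{042}; this yields bounds on $\|u_t\|_{L^\infty(0,T^{*}; H^{1})}$ and $\|u_{tt}\|_{L^{2}(\Omega \times (0,T^{*}))}$. Combining these with the elliptic system $\mu \Delta u + \nabla((\mu + \lambda)\mathrm{div}\,u) = \rho\dot u + \nabla P$ under the boundary condition \eqref{15} and the $W^{1,q}$ control on $\rho$ delivers $u \in L^{2}(0, T^{*}; H^{3})$ and $u(\cdot, T^{*}) \in H^{2}$.

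The main obstacle will be the bookkeeping of boundary terms arising from the higher-order time-differentiated energy estimates for $u$: each integration by parts produces surface integrals involving $u_t \cdot n$, $u_{tt} \cdot n$, or normal derivatives of $F$ on $\partial \Omega$, which are not zero and cannot be controlled merely by interior norms. The remedy, already exploited in Lemma \ref{042} and \eqref{qyi01}--\eqref{qs1}, is to rewrite such surface integrals via the identity $(u_t \cdot n)|_{\partial\Omega} = -((u\cdot\nabla)n \cdot u)_t - \cdots$ coming from $u \cdot n|_{\partial\Omega} = 0$, convert them into volume integrals using $\nabla^{\bot}$ and the tangential decomposition, and then absorb them via standard Sobolev and Poincaré inequalities combined with \eqref{41}--\eqref{402}. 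Once the higher-order estimates close, extension beyond $T^{*}$ via Lemma \ref{u21} produces the desired contradiction. Uniqueness is then routine: subtracting the equations for two strong solutions with the same data and applying an $L^{2}$-energy estimate to the difference, using the regularity \eqref{22} to absorb nonlinear terms, yields $(\rho_{1}, u_{1}) \equiv (\rho_{2}, u_{2})$.
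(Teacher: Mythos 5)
Your proposal is correct and follows essentially the same route as the paper, which simply invokes the continuation argument of \cite[Proposition 5.1]{huang2016existence} built on the local existence result (Lemma \ref{u21}), the global a priori bounds of Sections \ref{sec2} and \ref{sec3}, and the boundary-term manipulations of Lemma \ref{042}. The only minor slip is that the $L^2(0,T;H^3)$ integrability of $u$ you use to propagate $\rho\in H^2$ is not contained in \eqref{434}; it belongs to the regularity class \eqref{22} and must itself be re-derived in the continuation step (with constants allowed to depend on $\inf_{x\in\Omega}\rho_0$ and the $H^2$ norms of the data), which does not affect the argument.
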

\subsection{Proof of Theorem \ref{thmq1}}
Let $(\rho_0,  m_0)$ satisfying \eqref{18} be the initial data as described in Theorem \ref{thmq1}.  We construct a sequence of $C^{\infty}$ initial value $(\rho_0^{\delta}, u_0^{\delta})$,  and $u_0^{\delta}$ should satisfy slip boundary condition.   Using the standard approximation theory to $( {\rho}_0^\delta,u_0)$(see \cite{2010Partial} for example), we can find
a sequence of $C^\infty$ functions $(\tilde{\rho}_0^\delta,\tilde{u}_0^\delta)$ satisfying
\begin{equation}\label{51}
\lim_{\delta\rightarrow 0}\|\tilde{\rho}_0^\delta-\rho_0\|_{W^{1,q}}+\|\tilde{u}_0^\delta-u_0\|_{H^1}=0.
\end{equation}
However, $\tilde{u}_0^\delta$ may violate the slip boundary condition,we must go further.

Let $u^\delta_0$ be the unique smooth solution of elliptic system
\begin{equation}\label{52}
\begin{cases}
  \Delta u^{\delta}=\Delta\tilde{u}^\delta_0 & \mbox{ in } \Omega, \\
   u^{\delta}\cdot n=0, \ \mathrm{curl}u^{\delta}=0& \mbox{ on }\partial\Omega. \\
       \end{cases}
\end{equation} We define $ {\rho}_0^\delta=\tilde\rho^\delta_0+\delta$,
and set $m^\delta_0=\rho^\delta_0 u^\delta_0.$   Then, it is easy to check that
\begin{equation*}
\lim_{\delta\rightarrow 0}(\|\rho^\delta_0-\rho_0\|_{W^{1, q}}+\|u^\delta_0-u_0\|_{H^1})=0.
\end{equation*}

After applying Proposition \ref{thmq3}, we can construct a unique global strong solution $(\rho^{\delta}, u^{\delta})$ with initial value $(\rho_0^{\delta}, u_0^{\delta})$.
Such solution satisfy \eqref{434} for any $T>0$ and for some $C$ independent with $\delta$.

Letting $\delta\rightarrow 0$, standard compactness assertions (see\cite{perepelitsa2006global,vaigant1995,zhang2015}) make sure
the problem \eqref{11}-\eqref{15} has a global strong solution $(\rho, u)$ satisfying the properties listed in Theorem \ref{thmq1}.
Note that the uniqueness can be obtained via similar method in Germain \cite{germain2011weak}.

\subsection{Proof of Theorem \ref{thmq2}}
With all a priori estimations and Theorem \ref{thmq1} done, to deduce
the existence of weak solution, we may follow the proof (especially the compactness assertions) in \cite{huang2016existence}.
We just sketch the proof here. For more details see \cite{huang2016existence}.
\begin{proof}
Let $(\rho_0, u_0)$ be the initial data as in Theorem \ref{thmq2},  we construct an approximation initial value
$(\rho^\delta_0, u^\delta_0)$ in the same manner as \eqref{51} and \eqref{52}, but this time we have for any $p\geq 1$,
\begin{equation*}
\lim_{\delta\rightarrow 0}(\|\rho^\delta_0-\rho_0\|_{L^{p}}+\|u^\delta_0-u_0\|_{H^1})=0.
\end{equation*}
Moreover,
\begin{equation*}
\rho^\delta_0\rightarrow\rho_0\  \mathrm{in}\ W^*\ \mathrm{topology}\ \mathrm{of}\ L^\infty\ \mathrm{as}\ \delta\rightarrow 0.
\end{equation*}
We apply Proposition \ref{thmq3} to deduce the existence of unique global strong solution $(\rho^\delta, u^\delta)$ of problem \eqref{11}-\eqref{15} with initial value $(\rho^\delta_0, u^\delta_0)$ which satisfy \eqref{3103} \eqref{410} \eqref{411}
and \eqref{430} for any $T>0$. The constants involved are all independent of $\delta$.

From \eqref{3103} and \eqref{430}, we argue that
\begin{equation*}
\sup_{0\leq t\leq T}\|u^\delta\|_{H^1}+\int_0^T t\|u^\delta_t\|_{L^2}^2dt\leq C.
\end{equation*}
We apply Aubin-Lions Lemma to get a subsequence that
\begin{equation*}
\begin{split}
&u^\delta\rightharpoonup u \mbox{ weakly  * } \mathrm{in}\ L^\infty(0, T;H^1), \\
&u^\delta\rightarrow u\  \mathrm{in}\ C([\tau, T]; L^p),
\end{split}
\end{equation*}
for any $\tau\in (0, T)$ and $p\geq 1.$

Let $F^\delta=\big(2\mu+(\rho^\delta)^\beta\big)\mathrm{div} u^\delta+P(\rho^\delta)$ be the effect viscous flux with respect to approximation solution $(\rho^\delta, u^\delta)$.
Up on applying \eqref{3103} and \eqref{430} we have
\begin{equation*}
\int_0^T\left(\big \|F^\delta\|_{L^\infty}^{ {4}/{3}} +\|\omega^\delta\|_{H^1}^2 +\|F^\delta\|_{H^1}^{2}
+t\|\omega^\delta_t\|_{L^2}^2+t\|F^\delta_t\|_{L^2}^{2}\right) dt\leq C.
\end{equation*}

Once again, Aubin-Lions Lemma yields
\begin{equation}
\begin{split}\label{59}
&F^\delta\rightharpoonup F \mbox{ weakly  * }  \ \mathrm{ in }\ L^{\frac{4}{3}}(0, T;L^{\infty}),\\
&\omega^\delta\rightarrow \omega, \ F^\delta\rightarrow F\ \mathrm{in}\ L^{2}(\tau, T;L^{p}),
\end{split}
\end{equation}
for any  $\tau\in (0, T)$  and $p\geq 1. $

Now we are in the same position as that of \cite[(5.13)]{huang2016existence}, and we may follow the assertions in \cite{huang2016existence} to deduce the strong convergence of $\rho^\delta$, say
\begin{equation*}
\rho^\delta\rightarrow \rho\ \mathrm{ in }\ C([0,  T];L^p(\Omega)),
\end{equation*}
for any $p\geq 1$. Combining this with \eqref{59}, we argue that
\begin{equation*}
F^\delta\rightarrow F\ \mathrm{in}\ L^2((\tau,  T)\times\Omega)
\end{equation*}
for any $\tau\in (0, T).$  And \eqref{111} follows directly.
We conclude that $(\rho, u)$ is exactly the desired weak solution in Theorem \ref{thmq2} which  finishes our proof.
\end{proof}

\section*{Acknowledgements} The authors would like to thank Prof. Guocai Cai for his valuable discussions.  The research  is
partially supported by the National Center for Mathematics and Interdisciplinary Sciences, CAS,
NSFC Grant Nos.  11688101,  11525106, and 12071200,  and Double-Thousand Plan of Jiangxi Province (No. jxsq2019101008).

\begin {thebibliography} {99}


\bibitem{Aramaki2014Lp}
  J.  Aramaki,
  $L^p$ theory for the div-curl system,
  Int.  J.  Math.  Anal.,
  {\bf 8}(6) (2014),
  259-271.

\bibitem{beale1984remarks}J.  T.  Beale, T.  Kato, A.  Majda,
  Remarks on the breakdown of smooth solutions for the 3-D Euler equations,
  Commun.  Math.  Phys.,
  {\bf 94}(1) (1984),
  61-66.

\bibitem{caili01}G. C. Cai,  J. Li, Existence and exponential growth of  global classical solutions to the  compressible Navier-Stokes equations with slip boundary conditions in 3D bounded domains. arXiv: 2102.06348


\bibitem{2010Partial}L.  C.  Evans,
  Partial Differential Equations: Second Edition,
  2010

\bibitem{feireisl2004dynamics}E.  Feireisl,   A. Novotny, H.  Petzeltov\'{a}, On the existence of globally defined weak solutions to
the Navier-Stokes equations. J. Math. Fluid Mech. {\bf 3} (2001),   358-392.

\bibitem{germain2011weak} P. Germain,
  Weak--strong uniqueness for the isentropic compressible Navier-Stokes system,
  J.  Math.  Fluid Mech.,
  {\bf 13}(1) (2011),
  137-146.

\bibitem{gilbarg2015elliptic}D.  Gilbarg, N.  S.  Trudinger,
  Elliptic partial differential equations of second order,
  {Springer}
  2015

\bibitem{1995Global} D.  Hoff,
  Global solutions of the Navier-Stokes equations for multidimensional compressible flow with discontinuous initial data,
  {Journal of Differential Equations},
  {\bf 120}(1) (1995),
  215-254.

\bibitem{hoff2005} D.  Hoff,
  Compressible flow in a half-space with Navier boundary conditions,
  J.  Math.  Fluid Mech.,
  {\bf 7}(3) (2005),
  315-338.

\bibitem{huang2016existence} X. D.  Huang, J.  Li,
  Existence and blowup behavior of global strong solutions to the two-dimensional barotropic compressible Navier-Stokes system with vacuum and large initial data,
  J.  Math.  Pures Appl.,   {\bf 106}(1) (2016),  123-154.

\bibitem{hl21} X. D. Huang,   J.  Li, Global well-posedness of classical solutions to the Cauchy problem of two-dimensional baratropic compressible Navier-Stokes system with vacuum and large initial data.
  arXiv:1207.3746

\bibitem{hlx21}X. Huang, J. Li, Z.P. Xin, Global well-posedness of classical solutions with large oscillations and vacuum to the three-dimensional isentropic compressible Navier-Stokes equations, Commun. Pure Appl. Math. {\bf 65} (2012), 549-585.

\bibitem{jwx}Q. Jiu,  Y. Wang,  Z. P. Xin,   Global well-posedness of 2D compressible Navier-Stokes equations with large data and vacuum. J. Math. Fluid Mech. {\bf 16} (2014),  483-521.

 \bibitem{jwx1} Q. Jiu,  Y. Wang,  Z. P. Xin,   Global classical solution to two-dimensional compressible Navier-Stokes equations with large data in $\mathbb{R}^2$. Phys. D {\bf 376/377} (2018), 180-194.



\bibitem{lx01} J. Li, Z. P. Xin, Global well-posedness and large time asymptotic
behavior of classical solutions to the compressible
Navier-Stokes equations with vacuum. Annals of PDE, (2019) {\bf 5}:7

\bibitem{zhang2015}J.  Li, J.  W.  Zhang, J.  N.  Zhao,
  On the global motion of three-dimensional compressible isentropic flows with large external potential forces and vacuum,
  {SIAM J.  Math.  Anal. },
  {\bf 47}(2) (2015),
  1121-1153.

\bibitem{1998Mathematical} P.  L.  Lions,
  Mathematical Topics in Fluid Mechanics, vol.  2, Compressible Models, Oxford University Press, New York,
  1998

\bibitem{1980The}A.  Matsumura and T.  Nishida,
  The initial value problem for the equations of motion of viscous and heat-conductive gases,
  Journal of Mathematics of Kyoto University,
  {\bf 20}(1) (1980),
  67-104.

\bibitem{Mitrea2005Integral}
  D.  Mitrea,
  Integral equation methods for div-curl problems for planar vector fields in nonsmooth domains,
  Differ.  Int.  Equ.,
 {\bf 18}(9) (2005),
  1039-1054.

\bibitem{1962Le} J.  Nash,
  Le probl¨¨me de Cauchy pour les ¨¦quations diff¨¦rentielles d'un fluide g¨¦n¨¦ral,
  {Bulletin de la Societe mathematique de France},
  {\bf 90}(4) (1962),
  487-497.

\bibitem{novotny2004introduction} A.  Novotn\'{y}, I.  Stra\v{s}kraba,
  {Introduction to the mathematical theory of compressible flow},
  {27},
  {Oxford Lecture Ser. Math. Appl. Oxford Univ.
Press, Oxford, 2004}.

\bibitem{perepelitsa2006global} M.  Perepelitsa,
  On the global existence of weak solutions for the Navier--Stokes equations of compressible fluid flows,
SIAM J.  Math.  Anal., {\bf  38}(4) (2006),   1126-1153.

\bibitem{2016Representation}  M.  A.  Sadybekovand, B.  T.  Torebek, B.  Kh Turmetov,
  Representation of Green's function of the Neumann problem for a multi-dimensional ball,
  {Complex Variables Theory  Application An International Journal},
  {\bf 61}(1) (2016),
  104-123.

\bibitem{weko_39341_1}
   R.  Salvi and I. Straskraba,
   Global existence for viscous compressible fluids and their behavior as $t \to \infty$,
   J.  Fac.  Sci.    Uni.  Tokyo.  Sect.  1A,  Math.,
   {\bf 40}(1) (1993),    17-51.
\bibitem{ser1}J. Serrin, On the uniqueness of compressible fluid motion, Arch. Ration. Mech. Anal. {\bf 3} (1959), 271-288.

\bibitem{1971The} V.  A.  Solonnikov,
  The Green's matrices for elliptic boundary value problems. I,
  {Proceedings of the Steklov Institute of Mathematics},
  {\bf 110} (1970),
  123-170.

\bibitem{1972The} V.  A.  Solonnikov,
  The Green's matrices for elliptic boundary value problems. II,
  {Proceedings of the Steklov Institute of Mathematics},
  {\bf 116} (1971),
  187-226.

\bibitem{Solonnikov1980}
  V.  A.  Solonnikov,
  Solvability of the initial-boundary-value problem for the equations of motion of a viscous compressible fluid,
  {J.  Math.  Sci. },
  {\bf 14}(2) (1980),
  1120-1133.

\bibitem{2007Complex}E. M.  Stein,  R.  Shakarchi, Complex Analysis.  Princeton University Press,  2003.

\bibitem{tal1}G. Talenti,  Best constant in Sobolev inequality,  Ann. Mat. Pura Appl. {\bf 110} (1976), 353-372.

\bibitem{Warschawski1935On}S. E. Warschawski,
  On the higher derivatives at the boundary in conformal mapping,
  {Transactions of the American Mathematical Society},
  {\bf 38}(2) (1935),
  310-340.
\bibitem{1961On}  S.  E.  Warschawski,
  On differentiability at the boundary in conformal mapping,
  {Proceedings of the American Mathematical Society},
  {\bf 12}(4) (1961),
  614-620.

\bibitem{Wahl1992Estimating}W. Von Wahl,
  Estimating $u$ by div$u$ and curl$u$,
  Math.  Methods  Appl.  Sci.,
  {\bf 15}(2) (1992),
  123-143.

\bibitem{vaigant1995}V.  A.  Vaigant, A.  V.  Kazhikhov,
  On existence of global solutions to the two-dimensional Navier-Stokes equations for a compressible viscous fluid,
  Sib.  Math.  J.,
  {\bf 36}(6) (1995),
  1108-1141.

\end{thebibliography}

\end{document}